\newcommand{\stkout}[1]{\ifmmode\text{\sout{\ensuremath{#1}}}\else\sout{#1}\fi}
 \newcommand{\grad}{\triangledown}
\newcommand{\sA}{\mathscr{A}}
\theoremstyle{plain}
\newtheorem{thm}{Theorem}[section]
\theoremstyle{plain}
\newtheorem{lem}[thm]{Lemma}
\newtheorem{prop}[thm]{Proposition}
\theoremstyle{definition}
\newtheorem{defi}{Definition}[section]
\newtheorem{rem}{Remark}[section]
\newtheorem*{maintheorem*}{Main Theorem}
\newtheorem*{maincorollary*}{Main Corollary}
\newcommand{\norm}[1]{\ensuremath{\left\|#1\right\|}}
\newcommand{\abs}[1]{\ensuremath{\left|#1\right|}}
\newcommand{\Exp}{\mathbb{E}}
\newcommand{\Prob}{\mathbb{P}}
\newcommand{\sK}{\mathscr{K}}
\newcommand{\cG}{\ensuremath{\mathcal{G}}}
\newcommand{\cL}{\ensuremath{\mathcal{L}}}
\newcommand{\R}{\ensuremath{\mathbb{R}}}
\newcommand{\Usm}{\mathscr{U}}
\newcommand{\rd}{\ensuremath{\R^d}}
\newcommand{\dy}{\ensuremath{\, dy}}
\newcommand{\dz}{\ensuremath{\, dz}}
\newcommand{\ds}{\ensuremath{\, ds}}
\numberwithin{equation}{section} \allowdisplaybreaks
\DeclareMathOperator*{\Argmin}{arg\,min}
\begin{document}
\title[Nonlocal ergodic control problems]{Nonlocal ergodic control problem in $\rd$}

\author{Anup Biswas}

\address{Indian Institute of Science Education and Research-Pune, Dr.\ Homi Bhabha Road, Pashan, Pune 411008. Email:
{\tt anup@iiserpune.ac.in}}

\author{Erwin Topp}

\address{
Instituto de Matemáticas, Universidade Federal do Rio de Janeiro, Rio de Janeiro - RJ, 21941-909, Brazil; and 
Departmento de Matem\'aticas Y Ciencias de la Computaci\'on, Universidad de Santiago de Chile,
Casilla 307, Santiago, Chile. Email: {\tt etopp@im.ufrj.br, erwin.topp@usach.cl}}

\begin{abstract}
We study the existence-uniqueness of solution $(u, \lambda)$ 
to the ergodic Hamilton-Jacobi equation
$$(-\Delta)^s u + H(x, \grad u) = f-\lambda\quad \text{in}\; \rd,$$
and $u\geq 0$, where $s\in (\frac{1}{2}, 1)$. We show that the 
critical $\lambda=\lambda^*$, defined as the infimum of all $\lambda$
attaining a non-negative supersolution, attains a nonnegative solution $u$. Under suitable conditions, it is also shown that $\lambda^*$ is
the supremum of all
$\lambda$ for which a non-positive subsolution is possible. Moreover,
uniqueness of the solution $u$, corresponding to $\lambda^*$, is also established. Furthermore, we provide a probabilistic
characterization that determines the uniqueness of the pair $(u, \lambda^*)$ in the class of all solution pair
$(u, \lambda)$ with $u\geq 0$.
Our proof technique involves both analytic and probabilistic methods in combination with a new local Lipschitz estimate obtained in this article.
\end{abstract}

\keywords{Ergodic control problem, Liouville theorem, regularity, controlled jump diffusion, eigenvalue, Lipschitz regularity, Bernstein estimate}
\subjclass[2010]{Primary: 35J60, 35F21, 35P30 Secondary: 35Q93}

\maketitle

\section{Introduction}

In this article, we are interested in the 
study of ergodic problems for viscous Hamilton-Jacobi equations in the Euclidean space $\R^d$ with fractional Laplacian. 
The model problem takes the form
\begin{equation}\label{eqmodel}
(-\Delta)^s u + \frac{1}{m}|\grad u|^m = f - \lambda \quad \mbox{in} \ \rd,
\end{equation}
where $s \in (\nicefrac{1}{2},1)$, $m > 1$, $\lambda \in \R$, $f \in C(\rd)$ and $\grad u$ denotes the gradient of $u$. For an adequately regular function $u: \rd \to \R$, $(-\Delta)^s u$ denotes the fractional Laplacian of order $2s$, which is explicitly defined as
$$
(-\Delta)^s u(x) = -c_{d,s} \mathrm{P.V.} \int_{\rd} (u(x + y) - u(x))|y|^{-(d + 2s)} dy,
$$
where $\mathrm{P.V.}$ stands for the Cauchy principal value, and 
$
c_{d,s} = \frac{4^s\Gamma(d/2+s)}{\pi^{d/2}|\Gamma(-s)|}
$
is a well-known normalizing constant making $(-\Delta)^s \to -\Delta$ in an adequate functional framework, see~\cite{Hitch}.

We provide existence of an eigenpair $(u, \lambda)$ solving the problem~\eqref{eqmodel}, and qualitative properties of the solutions in the context of viscosity solution's theory. The main assumption here is the coercivity of the source term $f$, which allows us to deal with the lack of compactness of the state variable in~\eqref{eqmodel}.

In the periodic setting (namely, when $f$ is $\mathbb Z^d$-periodic), problem~\eqref{eqmodel} have been addressed in~\cite{BCCI14, BKLT15, BLT-17}, involving more general nonlocal operators $I$ with the form
\begin{equation}\label{opgeral}
-Iu(x) = -\mathrm{P.V.} \int_{\rd} (u(x + z) - u(x))K(z)dz,
\end{equation}
where $K: \rd \setminus \{ 0 \} \to \R_+$ is a symmetric function (the kernel) satisfying nowadays standard integrability conditions at the origin, and a growth conditions at infinity. The fractional Laplacian corresponds to the particular case $K(z) = c_{d,s}|z|^{-(s + 2s)}$.

Periodicity induces robust compactness properties to the equation, which in addition to a priori regularity estimates and strong maximum principle lead to uniqueness for the eigenvalue $\lambda$, and uniqueness, up to an additive constant, for the eigenfunction $u$ (in the sequel, we will refer to this property as uniqueness of the ergodic problem). The method used there follow the classical arguments of Lions, Papanicopoau \& Varadhan~\cite{LPV}, and do not differ substantially among the case of dominant diffusion ($2s \geq m$) and/or the gradient dominant case ($2s < m$). In \cite{N19}, the author addresses the problem in the non-periodic setting for nonlocal Ornstein-Uhlenbeck type equations
with an exponentially decaying nonlocal kernel.
Compactness is a consequence of the confining influence of the drift term and the exponential decay of the nonlocal kernel. This allows to get uniqueness of the eigenpair as in the periodic case.%\ftn{AB: Erwin, \cite{N19} deals with sublinear Hamiltonian. So gradients grows atmost linearly.} 

Similar ergodic problems have been extensively studied for the second-order analogue of~\eqref{eqmodel}, namely the problem
\begin{equation}\label{viscous}
-\Delta u + \frac{1}{m}|\grad u|^m = f - \lambda \quad \mbox{in} \ \rd.
\end{equation}

The existence and uniqueness for the ergodic problem~\eqref{viscous} in the periodic case is provided in Barles \& Souganidis~\cite{BarSou01}. In the non-periodic case, the very first work treating the case of quadratic Hamiltonian
was done by Bensoussan \& Freshe \cite{BenFre92}. In both references, the results are obtained using analytic methods. Ichihara~\cite{Ichi2011,Ichi2015} uses probabilistic methods and the optimal control perspective of the problem to obtain the uniqueness of the ergodic problem in the class of nonnegative solutions. Some other important works on this topic include Cirant \cite{Cirant}, Barles \& Meireles \cite{BM16}, Arapostathis, Biswas \& Caffarelli \cite{ABC19}, 
Arapostathis, Biswas \& Roychowshury \cite{ABR22}. It is also worth to mention the application of stationary ergodic problems to the study of large time behavior for solutions of associated parabolic equations, see for instance~\cite{BarSou01, Fujita-06a, Tchamba-10, Ichihara-12, Barles-10, BQR21} in the local case, and the mentioned works~\cite{BCCI14, BKLT15, BLT-17} in the nonlocal setting.

Other than the periodic setting, nonlocal ergodic problems in $\rd$ have been considered in quite restricted settings. In addition to the mentioned article~\cite{N19}, Br\"{a}ndle \& Chasseigne \cite{BC19} consider a problem similar to ours, but for the nonlocal operator
of dispersal type (that is, $K$ in~\eqref{opgeral} is continuous, bounded and compactly supported). In particular, none of these references deal with the relevant case of the fractional Laplacian.

Now we present the assumptions that allow us to get our main result. For introductory purposes we present it in the context of problem~\eqref{eqmodel}, and later the actual results are presented
in a greater generality.

As we mentioned above, our results rely on the coercivity of the source term $f \in C(\rd)$, namely
\begin{itemize}
\item[\hypertarget{F1}(F1)] There exists $C > 0$ and $\gamma < m(2s-1)$ such that $|f(x)|\leq C(1+|x|^\gamma)$ for all $x\in\rd$.

\medskip

\item[\hypertarget{F2}(F2)] $f$ is coercive, that is, 
$$\lim_{|x|\to\infty} f(x)= +\infty.$$
\end{itemize}

As we will see later, coercivity condition~\hyperlink{F2}{(F2)} helpful to obtain unbounded solutions. As well known, the solutions must be restricted to a class ad-hoc to the $s$-fractional operator. For this, we denote $\omega_s(y)=(1+|y|^{d+2s})^{-1}$ and look for viscosity solutions $u$ belonging to the space $L^1(\omega_s)$, defined as
$$
L^1(\omega_s) = \{ u \in L^1_{loc}(\rd) : \int_{\rd} |u(y)| \omega_s(y)dy < \infty \}.
$$

We also denote by $C_+(\rd)$ (resp. $C_-(\rd)$) the set of nonnegative (resp. nonpositive) functions in $C(\rd)$.
Our main result is the following
\begin{thm}\label{TeoIntro}
Let $s \in (1/2, 1)$, $m > 1$, and let $f \in C(\rd)$ satisfying~\hyperlink{F1}{(F1)}--\hyperlink{F2}{(F2)}. Then, there exists  $(u, \lambda^*) \in C(\rd) \cap L^1(\omega_s) \times \R$ solving the ergodic problem
\begin{equation}\label{eqTeoIntro}
 (-\Delta)^s u + \frac{1}{m}|\grad u|^m = f - \lambda^* \quad \text{in}\; \rd.
\end{equation} 

The ergodic constant $\lambda^*$ meets the characterization formulae
\begin{align*}
\lambda^* &=  \inf\{\lambda\; : \; \exists \, u\in C_+(\rd)\cap L^1(\omega_s)\; 
\text{such that}\; (-\Delta)^s u + \frac{1}{m}|\grad u|^m \geq f-\lambda\; \text{in}\; \rd\}, %\label{lambda*1}
\\
 & =  \sup\{\lambda\; : \; \exists \, u\in C_-(\rd)\cap L^1(\omega_s)\; 
\text{such that}\; (-\Delta)^s u + \frac{1}{m}|\grad u|^m \leq f-\lambda\; \text{in}\; \rd \}.%\label{lambda*2}
\end{align*}
The function $u$ is unique up to an additive constant. If $f$ is locally H\"older continuous, then $u \in C^{2s+}(\rd)$.

Moreover, if $f_1, f_2$ satisfy~~\hyperlink{F1}{(F1)}--\hyperlink{F2}{(F2)} , and if $\lambda^*(f_i)$ denotes
 the corresponding critical value in \eqref{eqmodel} associated to $f_i$, $i=1,2$,
then for $f_1\lneq f_2$ we have $\lambda^*(f_1)<\lambda^*(f_2)$.
\end{thm}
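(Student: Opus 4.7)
The plan is to combine the vanishing-discount method with a new local Lipschitz (Bernstein-type) bound, the coercivity of $f$, and probabilistic/comparison arguments. First, for $\varepsilon>0$ I would solve the discounted problem
$$\varepsilon u_\varepsilon + (-\Delta)^s u_\varepsilon + \tfrac{1}{m}|\nabla u_\varepsilon|^m = f \quad \text{in } \rd,$$
obtaining a bounded viscosity solution by Perron's method and comparison (the zeroth-order term $\varepsilon u_\varepsilon$ supplies compactness and a uniform bound $\varepsilon \|u_\varepsilon\|_\infty \leq \|f^-\|_\infty$). Next, I would exploit \hyperlink{F2}{(F2)} by constructing a radial supersolution growing like $|x|^{\gamma/m}$ at infinity to obtain a matching lower bound $\varepsilon u_\varepsilon(x) \geq f(x) - C(1+|x|^\gamma)$, so $u_\varepsilon - \inf u_\varepsilon$ is coercive uniformly in $\varepsilon$. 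Here the condition $\gamma < m(2s-1)$ is what makes the growth subcritical with respect to the Lipschitz exponent below.

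The crucial ingredient is then the new local Lipschitz estimate announced in the abstract: one shows that on any ball $B_R$ the normalized functions $v_\varepsilon := u_\varepsilon - u_\varepsilon(0)$ satisfy a Lipschitz bound depending only on $R$ and $\|f\|_{L^\infty(B_{2R})}$, uniformly in $\varepsilon$. This should be proved by a Bernstein-type argument adapted to the fractional Laplacian: testing against a doubling-type function $\Phi(x,y)=L|x-y|-\varphi(x)-\varphi(y)$ or differentiating the equation and using the coercive Hamiltonian $\tfrac{1}{m}|\nabla u|^m$ to absorb the nonlocal tail terms, where the restriction $s>1/2$ is essential so that $(-\Delta)^s$ dominates gradient-type remainders of order one. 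Passing to the limit via Arzelà–Ascoli and stability of viscosity solutions yields $v_\varepsilon \to u$, $-\varepsilon u_\varepsilon(0) \to \lambda^*$, and a solution pair of \eqref{eqTeoIntro}. Shifting $u$ by a constant makes $u \geq 0$, which combined with coercivity yields $u \in L^1(\omega_s)$.

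For the two characterizations of $\lambda^*$, the supersolution inequality $\lambda^* \leq \inf\{\ldots\}$ is immediate since the constructed $u$ is itself a nonnegative supersolution (in fact a solution). The reverse inequality $\lambda^* \geq \inf\{\ldots\}$, and similarly the subsolution characterization, should follow by a comparison/Liouville argument: given a nonnegative supersolution $w$ for some $\lambda$, the function $w - u$ is a supersolution of a linear nonlocal equation with a Lipschitz-type first-order term; the coercivity of $u$ then prevents $w-u$ from touching $-\infty$ at infinity, and a strong minimum principle forces $\lambda \geq \lambda^*$. The uniqueness of $u$ up to an additive constant is obtained similarly by applying the strong maximum principle to $u_1 - u_2 - \inf(u_1-u_2)$, combined with a probabilistic representation of the minimal nonnegative solution via the controlled jump diffusion associated with the Hamiltonian $\tfrac{1}{m}|\nabla u|^m$, whose optimal feedback is $\alpha^*=-|\nabla u|^{m-2}\nabla u$.

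The regularity statement $u \in C^{2s+}$ when $f \in C^\alpha_{\mathrm{loc}}$ follows by bootstrapping: the already established local Lipschitz bound makes $\tfrac{1}{m}|\nabla u|^m \in C^{0,1}_{\mathrm{loc}}$, so interior Schauder estimates for the fractional Laplacian upgrade $u$ to $C^{1+2s-1+\alpha'}_{\mathrm{loc}}$ for some small $\alpha'$, and iterating one more time gives $C^{2s+\alpha''}_{\mathrm{loc}}$. Finally, for the monotonicity statement, if $f_1 \lneq f_2$ and $(u_i,\lambda^*(f_i))$ are the corresponding ergodic pairs, then $u_2$ is a supersolution for the $f_1$-equation with parameter $\lambda^*(f_2) + \|(f_2-f_1)\|_\infty$; the characterization yields $\lambda^*(f_1)\leq \lambda^*(f_2)$, and strict inequality is obtained by plugging $u_2$ into the first characterization and using the strong maximum principle: equality would force $f_1 = f_2$ on the support of the associated invariant measure of the optimal diffusion, which by irreducibility (guaranteed by the fractional Laplacian) is all of $\rd$, contradicting $f_1 \lneq f_2$. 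The main obstacle throughout is the fractional Bernstein-type Lipschitz estimate in Step 2, since the superlinear gradient term competes delicately with the nonlocal diffusion and the polynomial growth of $f$.
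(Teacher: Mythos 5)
Your overall strategy (vanishing discount, a uniform local Lipschitz estimate via doubling of variables, Arzel\`a--Ascoli and stability, then comparison for the characterizations) is the same as the paper's, but several key steps as you describe them would fail. First, since $f$ is coercive and unbounded, the discounted solution $u_\varepsilon$ is \emph{not} bounded and the bound $\varepsilon\|u_\varepsilon\|_\infty\le\|f^-\|_\infty$ is false on the upper side; one needs an explicit growing barrier. Moreover your proposed barrier growth $|x|^{\gamma/m}$ cannot work: for a supersolution one needs $|\grad V|^m\gtrsim |x|^\gamma$, i.e.\ $V\sim|x|^\beta$ with $\beta>1+\gamma/m$, and simultaneously $\beta<2s$ so that $V\in L^1(\omega_s)$ and $(-\Delta)^sV$ is controlled --- this is exactly where the hypothesis $\gamma<m(2s-1)$ enters ($1+\gamma/m<2s$), not through the Lipschitz exponent. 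With $V\sim|x|^{\gamma/m}$ the gradient term is $O(|x|^{\gamma-m})\ll f$, so comparison gives nothing.

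Second, the two characterizations of $\lambda^*$ are \emph{not} symmetric, and ``similarly'' hides a real argument. The inf over nonnegative supersolutions is handled in the paper by comparing a candidate supersolution with the Dirichlet approximations $W_l$ of the discounted problem (shifted by an arbitrary constant $M$) and letting $l\to\infty$, $\alpha\to0$; a bare ``strong minimum principle applied to $w-u$'' does not close the argument because the ergodic equation has no zeroth-order term and no comparison principle in all of $\rd$. The sup over nonpositive subsolutions requires a genuinely different construction: the paper truncates and periodizes $f$ into $F_k$, solves the periodic ergodic problem on cubes of growing period, shows the periodic eigenvalues $\varrho_k$ satisfy $0\le\varrho_k\le\lambda^*$ and converge to $\lambda^*$, and uses that each periodic eigenfunction is a bounded-above subsolution of the original problem. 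Third, your uniqueness argument presupposes that $\inf(u_1-u_2)$ is attained, which is precisely the hard point; the paper first proves a minimality lemma (comparison of $\mu W_{n_k}$ with a competitor $v$, exploiting convexity via (H3) and the coercivity of $f$ to locate the maximum of $\mu W_{n_k}-v$) before the strong maximum principle can be applied at an interior minimum. Finally, in the monotonicity step $\|f_2-f_1\|_\infty$ may be infinite and is not needed: $u_2$ is directly a supersolution of the $f_1$-equation at level $\lambda^*(f_2)$, and strictness follows from the uniqueness theorem ($\lambda^*(f_1)=\lambda^*(f_2)$ would force $u_1=u_2+c$ and hence $f_1=f_2$), with no invariant-measure or irreducibility argument required.
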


As we mentioned above, Theorem~\ref{TeoIntro} is a particular case of a more general result that we state in full generality in Theorem~\ref{T1.2} below. Here $C^{2s+}(\rd)$ denotes the subset of continuous functions with the following property: $g\in C^{2s+}(\rd)$ if and only if for each compact set $\sK$ there exists an $\eta>0$ satisfying $g\in C^{2s+\eta}(\sK)$. 

Before we proceed, we would like to  point out
some key features of the above results concerning the assumptions on $f$. Condition \hyperlink{F2}{(F2)} is pretty standard in the context of an ergodic
control problem, whereas \hyperlink{F1}{(F1)} will be used to construct an
appropriate supersolution which will serve as a barrier function. To some extent, condition \hyperlink{F1}{(F1)} is optimal for the existence of the solution, see Theorem~\ref{T-nonex} below.
More precisely, we show that if the growth of $f$ is bounded from below by $|x|^{m(2s-1)}$ then $\lambda^*=\infty$.

In the local setting, one normally assumes $f$ to be locally Lipschitz \cite{ABC19,BM16,Ichi2011}.
One of the main reasons for this assumption is the use of standard Bernstein estimate, which requires  $f$ to be Lipschitz. On the other hand, our Lipschitz estimate (see Theorem~\ref{Thm-Lip} below) works for continuous $f$. 
Furthermore, for $s=1$ and $m\in (1, 2)$ (that is, the subcritical case), the uniqueness of solutions are proved imposing some restrictions on $f$ (see \cite[Condition~(A1)]{ABC19}, \cite[Section~3.3]{BM16}). Our results do not impose any such restriction on $f$, and therefore, can be used to improve the results for the local setting. In~\cite{BC19}, the authors establish uniqueness of the solution (for their dispersal type nonlocal kernel) under some growth assumption on the solution and also imposing a set of conditions on $f$. In particular, it does not include $f$ that increases {\it slowly} to infinity (for example, $\log x$). Also, unlike \cite{BC19}, we can not treat the nonlocal term as a lower order (or zeroth order)
term.  For dispersal type kernels, non-existence of radially
increasing solutions for 
a suitable class of $f$ is obtained by \cite{BC19}. Since the 
nonlocal kernel did not have any singularity, the authors of
\cite{BC19} could transform the problem to an ode and analyze it to
establish non-existence. Our proof of Theorem~\ref{T-nonex} relies on comparison principle, and we do not require the supersolution to be radial.

It is important to point out that, unlike the results for second-order problems presented in~\cite{BM16}, we are not able to provide a full uniqueness result for the ergodic constant. In \cite{BM16}, this is established as a consequence of the comparison principle among equations with different \textsl{eigenvalues} and the use of Cole-Hopf transformation (see Proposition 3.2 in~\cite{BM16}).
In fact, this idea of exponential transformation goes back to \cite{BenFre92}. We are not able to reproduce such a technique in the nonlocal setting. We neither are able to provide examples where the eigenvalue is not unique.

\medskip

In the remaining of this introduction, we present our second main result. This is motivated by the relation of ~\eqref{eqmodel} to the study of stochastic ergodic control problems  where the external noise of the controlled dynamics is 
given by a $2s$-stable process. More precisely, we consider the controlled stochastic differential equation 
$$
d X_t= -\zeta_t dt +  dL_t \quad \text{for}\; t> 0,
$$
where $L$ denotes a $2s$-stable process and $\zeta$ is an {\it admissible} control taking values in $\rd$.  Given $x \in \rd$, we consider the minimization problem of long-run average cost
$$
\inf_{\zeta(\cdot)} \limsup_{T\to\infty}\, \frac{1}{T} \Exp_x \left[\int_0^T \left( \frac{1}{m'} |\zeta_t|^{m'} + f(X_t)\right) dt\right],
$$
where $m'$ is the H\"older conjugate of $m$, 
and the infimum is taken over all admissible control.  Under appropriate assumptions, it is possible to prove that this limit exists, independent of $x$, and coincides with $\lambda^*$ in Theorem~\ref{TeoIntro} (see, for instance, the proof of Lemma-3 in the supplement of \cite{ABB18}).

%{\color{red} There are several works in the literature (for local Hamilton-Jacobi equations) that use
%subsolutions to define the critical $\lambda$. See, for instance, \cite{ABC19}, \cite[Theorem~2.4]{BM16}, \cite[Proposition~3.4]{Ichi2011}, \cite{Ichi2015}. In \cite[Corollary~2.1]{ABR22} it is shown that the definition using subsolution coincides with 
%the one used above. Furthermore, the above definition is also natural if we keep the underlying control problem in mind
%(cf. \cite[Lemma~3.6.8]{ABG12}).}

Assume that $f$ is locally H\"{o}lder continuous.
Consider a nonnegative, classical solution $u$ to~\eqref{eqTeoIntro}. Notice that in this case, the vector field  $x \mapsto b_u(x) :=  |\grad u(x)|^{m - 2} \grad u(x)$ is locally H\"{o}lder continuous. 
For $x, \xi \in \rd$, let us define
$$
\cG(x, \xi)=f(x)+ \frac{1}{m'} |\xi|^{m'},
$$
and the operator
$\sA_u$ as follows
$$
\sA_u \varphi(x)=-(-\Delta)^s \varphi(x)  - b_u(x)\cdot \grad \varphi(x),\quad \varphi\in C^{2s+}(\rd)\cap L^1(\omega_s).
$$
It is then easily seen from~\eqref{eqTeoIntro} that
\begin{equation*}%\label{E1.8intro}
\sA_u u(x) + \cG(x, b_u(x)) 
= -(-\Delta)^s u + \min_{\xi\in\rd}[-\xi\cdot \grad u + \cG(x, \xi)]
= -(-\Delta)^s u - \frac{1}{m}|Du|^m + f = \lambda^*.
\end{equation*}
Note that the above equation corresponds to a stochastic 
ergodic control problem with the running cost function
given by $\cG$ and the controlled dynamics is given by
$$X_t = -\int_0^t \zeta_s ds + L_t,\quad t\geq 0,$$
where $\zeta$ denotes the control process and $L$ is a $2s$-stable process in $\rd$. For our next result, we are going to exploit 
this underlying control problem. We restrict ourselves to 
stationary Markov controls.

More precisely, by a stationary Markov 
control, we mean a locally H\"{o}lder continuous function $\zeta:\rd\to \rd$. 
The associated operator $\sA_\zeta$ is defined as
$$
\sA_\zeta \varphi(x)=-(-\Delta)^s \varphi(x) - \zeta(x)\cdot \grad \varphi(x),\quad \varphi\in C^{2s+}(\rd)\cap L^1(\omega_s).
$$
The solution $X$, corresponding to $\zeta$, would be understood
in the sense of martingale problem which we introduce below. Readers may consult Ethier and Kurtz \cite{EK86} for more details on this topic.

Let $\Omega:=\mathbb{D}([0, \infty):\rd)$ be the space of all right
continuous $\rd$ valued functions on $[0, \infty)$ with finite left limit, and
let $X:\Omega\to \rd $ denote the canonical coordinate process, that is,
$$X_t(\omega)=\omega(t)\quad \text{for all}\; \omega\in \Omega.$$

By $\{\mathfrak{F}_t\}$ we denote the filtration of $\sigma$-algebras generated by $X$. 
\begin{defi}\label{D1.1}
Let $\zeta$ be a stationary Markovian control and $x\in\rd$.
A Borel probability measure $\Prob$ on $\mathbb{D}([0, \infty):\rd)$ is said to be a solution to the martingale problem
for $(\sA_\zeta, C^\infty_c(\rd))$ with initial point $x$ if
\begin{itemize}
\item[(i)] $\Prob(X_0=x)=1$,
\item[(ii)] for every $\psi\in C^\infty_c(\rd)$, we have $\int_0^t |\sA_\zeta \psi(X_s)|ds<\infty$ $\Prob$-almost surely, for all $t>0$, and 
$$M^\psi_t:= \psi(X_t)-\psi(X_0) -\int_0^t \sA_\zeta \psi(X_s)ds$$
is $(\mathfrak{F}_t, \Prob)$-martingale.
\end{itemize}
We recall that a real-valued stochastic process $\{Y_t\; :\; t\geq 0\}$ is said to be
a martingale with respect to $(\mathfrak{F}_t, \Prob)$ if the following three properties hold
\begin{itemize}
\item[-] $Y_t$ is $\mathfrak{F}_t$ measurable for all $t\geq 0$,
\item[-] for each $t\geq 0$, we have $\Exp[|Y_t|]<\infty$ where $\Exp$ denotes the expectation with respect to $\Prob$,
\item[-] for each $t\geq s\geq 0$, we have $\Exp[Y_t|\mathfrak{F}_s]=Y_s$ $\Prob$-almost surely.
\end{itemize}
The martingale problem for $(\sA_\zeta, C^\infty_c(\rd))$ with initial point $x \in \rd$ is said to be well-posed if
it has a unique solution, { that is, there exists only one probability measure $\Prob$ on $\mathbb{D}([0, \infty):\rd)$ satisfying properties (i) and (ii) in Definition~\ref{D1.1}.}
\end{defi}
Given a set $D$, by $\breve\uptau_D$ we denote the first return time to 
$D$, that is,
$$\breve\uptau_D=\inf\{t>0\; :\; X_t\notin D^c\}.$$

\begin{defi}%\label{D1.22}
Let $\Usm$ be the set of all stationary Markov controls $\zeta$ satisfying the following:
\begin{itemize}
\item[(i)] The martingale problem for $(\sA_\zeta, C^\infty_c(\rd))$ with initial point $x\in\rd$ is well-posed for all $x\in\rd$.
\item[(ii)] For any non-empty compact set $\mathscr{B}$,
containing $\{x\in \rd\; :\; \cG(x, \xi)-\lambda^*\leq 1\; \text{for some}\; \xi\}$, we have $\Prob^\zeta_x(\breve\uptau_{\mathscr{B}}<\infty)=1$ for all $x\in {\mathscr{B}}^c$, 
where $\Prob^{\zeta}_x$ denotes the unique probability measure solving the martingale problem for $(\sA_\zeta, C^\infty_c(\rd))$ with initial point $x$.
\end{itemize}
\end{defi}

{ Condition (ii) above is nothing but the recurrence property of $X$ with respect to compact sets $\mathscr{B}$ containing the bounded set $\{x\in \rd\; :\; \cG(x, \xi)-\lambda^*\leq 1\; \text{for some}\; \xi\}$. In other words, for each $\zeta\in\Usm$ we need the stochastic process $X$ to enter $\mathscr{B}$ with probability $1$ with respect to $\Prob^{\zeta}_x$. This property
is required to control the last term in the display \eqref{ineqexit} which is crucial for our uniqueness result in the next theorem. In fact, using the non-degenerate property of the fractional Laplacian and the strong Markov property of $X$ (which follows from (i) above) it is possible to show that $X$ is recurrent with respect to any non-empty open set if it is recurrent with
respect to some non-empty compact set $\mathscr{B}$ (see for instance, \cite[Theorem~5.1]{ABC16}).
Moreover, by Lemma~\ref{L6.4}, which appears in Section~\ref{secstoch}, we see that $\Usm$ is non-empty. }

Now we can state our next result.
\begin{thm}\label{TeoIntro2}
Assume the hypotheses of Theorem~\ref{TeoIntro} holds, and $f$ is locally H\"{o}lder continuous. Let $u$ be a solution given in Theorem~\ref{TeoIntro}. 

Then, for any $\zeta\in \Usm$ and compact set
$\mathscr{B}\supset \{x\in \rd\; :\; \cG(x, \xi)-\lambda^*\leq 1\; \text{for some}\; \xi\}$, we have
\begin{equation}\label{ineqexit}
u(x)\leq \Exp^\zeta_x\left[\int_0^{\breve\uptau_{\mathscr{B}}}(\cG(X_s, \zeta(X_s))-\lambda^*)\ds\right] 
+ \Exp_x^\zeta[u(X_{\breve\uptau_{\mathscr{B}}})]\quad \text{for}\; x\in \mathscr{B}^c,
\end{equation}
where $\Exp^\zeta_x[\cdot]$ denotes the expectation operator with respect to the probability measure $\Prob^{\zeta}_x$.

On the other hand, if there exists a solution $(v, \lambda) \in C^{2s+}(\rd) \cap L^1(\omega_s) \times \R, v\geq 0$, to~\eqref{eqmodel}
and $(v,\lambda)$ satisfies~\eqref{ineqexit}, then $\lambda=\lambda^*$ and $v=u +c$ for some constant $c$.
\end{thm}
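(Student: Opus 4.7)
The plan for Part 1 is a Dynkin-type verification argument. Since \eqref{eqTeoIntro} reads as a Hamilton--Jacobi--Bellman equation associated with the Lagrangian $\cG$, Young's inequality $\zeta\cdot \grad u \leq \frac{1}{m'}|\zeta|^{m'} + \frac{1}{m}|\grad u|^m$ (with equality when $\zeta = b_u := |\grad u|^{m-2}\grad u$) yields the pointwise bound
$$\sA_\zeta u(x) + \cG(x,\zeta(x)) \;\geq\; -(-\Delta)^s u - \tfrac{1}{m}|\grad u|^m + f \;=\; \lambda^*$$
for every stationary Markov control $\zeta$, with equality at $\zeta=b_u$. For $\zeta\in\Usm$ I then apply the martingale identity defining $\Prob^\zeta_x$ to $u$ at the stopping time $\tau_T := \breve\uptau_\mathscr{B}\wedge T$; after a truncation of $u$ to legitimize the identity for $u\in C^{2s+}\cap L^1(\omega_s)$ and a localization via exit times $\tau_R$ of large balls $B_R$ (removed using the nonexplosion built into $\Usm$), I obtain
$$u(x) \;\leq\; \Exp^\zeta_x[u(X_{\tau_T})] + \Exp^\zeta_x\Big[\int_0^{\tau_T}(\cG(X_s,\zeta(X_s))-\lambda^*)\dif s\Big].$$
The passage $T\to\infty$ is handled by monotone convergence on the integral (by choice of $\mathscr{B}$ the integrand is $\geq 1$ on $\{s<\breve\uptau_\mathscr{B}\}$) and, for the boundary term, by $\breve\uptau_\mathscr{B}<\infty$ a.s.\ together with $u\geq 0$ and tail estimates on $\{\breve\uptau_\mathscr{B}>T\}$, giving \eqref{ineqexit}.

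For Part 2, first note that $v\geq 0$ is a nonnegative supersolution for $\lambda$, so the $\inf$-characterization of Theorem~\ref{TeoIntro} forces $\lambda\geq\lambda^*$. To rule out strict inequality, I produce a uniform upper bound on $w := v-u$ as follows. Part 1 applied with $\zeta=b_u$ gives equality (not just inequality) for $u$, while the hypothesis \eqref{ineqexit} for $(v,\lambda)$ at $\zeta=b_u$ supplies the same identity for $v$ with $\leq$. Subtracting and using that $X_{\breve\uptau_\mathscr{B}}\in\mathscr{B}$ (compact),
$$w(x) \;\leq\; \Exp^{b_u}_x[w(X_{\breve\uptau_\mathscr{B}})] \;\leq\; M := \max_{\mathscr{B}} w \qquad \text{for every } x\in\mathscr{B}^c,$$
so $w\leq M$ on all of $\rd$.

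Subtracting the HJ equations for $v$ and $u$ and again applying Young's inequality at $\zeta=b_u$ produces the pointwise bound $\sA_{b_u} w \geq \lambda-\lambda^*$. Applying the martingale identity to $w$ under $\Prob^{b_u}_x$ at $T\wedge\tau_R$, sending $R\to\infty$ via nonexplosion, and using Fatou on the nonnegative quantity $M-w(X_{T\wedge\tau_R})$ gives
$$\Exp^{b_u}_x[w(X_T)] \;\geq\; w(x) + (\lambda-\lambda^*)\,T.$$
Since $w\leq M$, the left-hand side is bounded uniformly in $T$; if $\lambda>\lambda^*$ the right-hand side diverges as $T\to\infty$, a contradiction. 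Therefore $\lambda=\lambda^*$, and the uniqueness clause of Theorem~\ref{TeoIntro} yields $v=u+c$.

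The principal obstacle is rigorously setting up the Dynkin/martingale identities for the non-compactly-supported $u$ and $w$ in $C^{2s+}\cap L^1(\omega_s)$: one needs a careful truncation/localization scheme so that the compensator integrates correctly, and the tail bound $\Exp^\zeta_x[u(X_T)\mathbf{1}_{\{\breve\uptau_\mathscr{B}>T\}}]\to 0$ in Part 1 requires exploiting the recurrence encoded in $\Usm$ together with the coercivity of $\cG-\lambda^*$ on $\mathscr{B}^c$ to control the moments of $u$ along trajectories.
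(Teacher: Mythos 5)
Your overall blueprint (a verification/Dynkin argument for the upper bound, then a comparison of the two stochastic representations along the control $b_u$) is the right shape, but both halves contain gaps that are precisely the points the paper's proof is designed to get around. In Part 1, the step you defer as "the principal obstacle" is not a routine technicality. The Dynkin identity is only available for $\psi\in C^\infty_c(\rd)$ (and, by approximation, for bounded $C^{2s+}$ functions), whereas $u$ is coercive and unbounded. The standard device for extending to unbounded functions — composing with a bounded concave $\varphi_k$ so that $\sA_\zeta\varphi_k(u)\leq\varphi_k'(u)\,\sA_\zeta u$ — produces an inequality in the \emph{wrong} direction for the bound $u(x)\leq\cdots$ you need (it works for supersolution-type lower bounds, which is exactly how the paper uses it in Lemma~\ref{L6.4}, but not for the subsolution-type upper bound of \eqref{ineqexit}). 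A multiplicative cutoff $u\chi_{B_R}$ instead leaves you with the boundary term $\Exp^\zeta_x[u(X_{\tau_{B_R}})\mathbf 1_{\{\tau_{B_R}<\breve\uptau_{\mathscr B}\wedge T\}}]$, which grows like $R^\beta$ against a probability you cannot quantify, since $\Usm$ only encodes well-posedness and $\Prob^\zeta_x(\breve\uptau_{\mathscr B}<\infty)=1$, with no moment or nonexplosion rate for a general locally H\"older drift $\zeta$. The paper avoids all of this by never applying Dynkin to $u$: it works with the discounted Dirichlet approximations $W_{n_k}$ (which vanish identically outside $B_{n_k}$, so the exit boundary term is zero), compares them with a \emph{bounded} solution $\tilde W$ of the linear Dirichlet problem for $\sA_\zeta$ on the annulus $B_{n_k}\setminus\mathscr B$, represents $\tilde W$ stochastically, and only then passes $n_k\to\infty$ and $\alpha\to 0$ (Lemmas~\ref{L6.1}--\ref{L6.2}). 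You would need to either adopt this route or supply genuinely new estimates to close your tail bound.

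In Part 2 there are two further gaps. First, you invoke "Part 1 with equality at $\zeta=b_u$", but Part 1 only yields $\leq$; the reverse inequality $u(x)\geq\Exp^{b_u}_x[\int_0^{\breve\uptau_{\mathscr B}}(\cG-\lambda^*)\ds]+\Exp^{b_u}_x[u(X_{\breve\uptau_{\mathscr B}})]$ is a separate statement requiring its own proof (this is the content of Lemma~\ref{L6.4}, where the concave truncation does work because the inequality goes the other way). Second, before you may use $\zeta=b_u$ at all — either in Part 1 or in the hypothesis \eqref{ineqexit} for $v$ — you must show $b_u\in\Usm$, i.e., that the martingale problem for $\sA_{b_u}$ is well-posed and the process returns to $\mathscr B$ almost surely; $b_u$ is only locally H\"older and unbounded, so this is nontrivial and occupies Lemma~\ref{L6.3} (a localization through the Kato-class result of \cite{CW16} plus a Lyapunov/nonexplosion argument using $u$ itself). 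Granting those two points, your $T\to\infty$ growth argument for $\lambda=\lambda^*$ is a workable alternative to the paper's, which instead normalizes $\min_{\mathscr B}(u-v)=0$, deduces $u\geq v$ everywhere from the two representations, and concludes $\lambda=\lambda^*$ and $v=u+c$ in one stroke via the strong-maximum-principle argument of Theorem~\ref{T5.2} evaluated at the interior minimum of $u-v$; but as written your proposal leaves the essential analytic and probabilistic inputs unestablished.
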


As before, Theorem~\ref{TeoIntro2} is a particular case of Theorem~\ref{T1.4} below.
Formula~\eqref{ineqexit} is often referred to as a stochastic representation of the solution. Note that Theorem~\ref{TeoIntro2} provides uniqueness of the ergodic problem~\eqref{eqmodel} in the class of all nonnegative solutions $u$ for which~\eqref{ineqexit} holds. Such a requirement is not necessary for the second-order problems related to~\eqref{eqmodel}. As we already mentioned, in~\cite{BenFre92,BM16} uniqueness is obtained through Cole-Hopf transformation, whereas in~\cite{ABC19,ABR22} this is a consequence of an occupation measure based approach together with a {\it strong} Bernstein estimate.
Both approaches do not seem to be helpful in treating the nonlocal ergodic control problem in hand. 

\medskip

\subsection*{Basic notations and organization of the paper.} We start this section with some general notation. From here and until the end of this article, for $r > 0$ and $x \in \R^d$ we denote by $B_r(x)$ the ball of radius $r$ and center $x$. When $x=0$ we just write $B_r = B_r(0)$, and if in addition $r=1$ we write $B = B_1(0)$. For $a \in \R$ we adopt the notation $a^+ = \max \{ a, 0 \}$ and $a^- = \min \{ a, 0 \}$, from which $a = a^+ + a^-$. Given a set $E \subseteq \rd$ and a bounded function, $u: E \to \R$ we denote $\mathrm{osc}_E u = \sup_E u - \inf_E u$.

\medskip

The paper is organized as follows: In Section~\ref{secLip} we provide more general assumptions on the nonlocal term and the Hamiltonian, and provide the local Lipschitz estimates for the solutions. In Section~\ref{secgeneral} we  state our first main result Theorem~\ref{TeoIntro} in the general form and provide the proof of the existence of the eigenpair. In Section~\ref{seccharact} we prove the characterization of the critical eigenvalue, and in Section~\ref{secunique} we show the uniqueness up to a constant for the associated eigenfunctions. Finally, in Section~\ref{secstoch} we prove our second main Theorem~\ref{TeoIntro2} related to the stochastic characterization of the critical eigenvalue in a larger generality.

%%%%%%%%%%%%%%%%%%%%%%%%%%%%%%%%%%%%%%%%%%%%%%%%%%%%%

\section{A priori Local Lipschitz estimates}
\label{secLip}

In what follows, we introduce our more general problem, starting with the Hamiltonian $H \in C(\rd \times \rd)$.
\begin{itemize}
\item[\hypertarget{H1}{(H1)}] There exists $m \geq 1$ such that, for each $R>0$ there exists $C_{H,R}>0$ so that
\begin{equation*}
|H(x, p+q)-H(y, p)|\leq C_{H,R}\left[ |x-y| (1+|p|^m) + |q| (|p|^{m-1}+|q|^{m-1})\right]
\end{equation*}
for all $ x, y\in B_R$ and $p, q\in\rd$ with $|q| \leq R$.

\item[\hypertarget{H2}{(H2)}] There exists a constant 
$\upkappa$ such that
$$\frac{1}{\upkappa}|p|^m-\upkappa \leq H(x, p) %\leq \upkappa(1+|p|^m)
\quad \text{for all}\; x, p\in\rd.$$
\end{itemize}

The parameter $m$ encodes the gradient growth of the Hamiltonian. A standard example of $H$ satisfying \hyperlink{H1}{(H1)}, \hyperlink{H2}{(H2)} is given by
\begin{equation*}%\label{example}
H(x, p)= \frac{1}{m}\langle p, a(x) p\rangle^{\nicefrac{m}{2}} + b(x)\cdot p\quad x, p\in\rd,
\end{equation*}
where $a:\rd\to\R^{d\times d}, b:\rd\to \rd$ are Lipschitz continuous functions and $a$ is uniformly positive definite. Note that unlike some of the existing works \cite{ABC19,ABR22,BM16,Ichi2011} we do not assume any convexity or 
$m$-homogeneity in the $p$ variable.
%Conditions \hyperlink{H2}{(H2)}--\hyperlink{H1}{(H1)} are enough to find the existence of a unique solution for the ergodic problem.

We also consider a slightly more general nonlocal operator, and for this we introduce some extra notation. For a kernel $K: \R^d \setminus \{ 0 \} \to \R$ satisfying the ellipticity condition
\begin{equation}\label{ellipticity}
\lambda |y|^{-(d + 2s)} \leq K(y) \leq \Lambda |y|^{-(d + 2s)} \quad \mbox{for} \ y \in \R^d \setminus \{ 0 \},
\end{equation}
for some constants $0 < \lambda \leq \Lambda < +\infty$, 
and given a suitable measurable function $u: \R^d \to \R$, we denote
\begin{equation}\label{gen-I}
I u(x) = \int_{\rd} (u(x+y)-u(x)-\chi_{B}(y)\, y\cdot \grad u(x))K(y)dy,
\end{equation}
where $\chi_E$ denotes the indicator function of the (measurable) set $E \subset \R^d$. Notice that no simmetry assumption is imposed on $K$.

We also require the following notation for the viscosity evaluation: given $E, u$ as before, and $p \in \rd$, we denote
\begin{equation}\label{Ieval}
I[E](u, p, x) = \int_E (u(x+y)-u(x)-\chi_{B}(y)\, y\cdot p )K(y)dy.
\end{equation}

For simplicity, whenever $u$ is smooth at $x$, we write
\begin{equation*}
I[E](u, x) = \int_E (u(x+y)-u(x)-\chi_{B}(y)\, y\cdot \grad u(x))K(y)dy.
\end{equation*}

Finally, given $u \in L^1(\omega_s)$, $E \subset \rd$ measurable, and $a > 0$, we denote
$$
\mathscr{H}(u, E, a):= \sup_{x\in E} \int_{\rd} \frac{|u(x+y)-u(x)|}{a + |y|^{d + 2s}} \dy.
$$

Notice that $a \mapsto \mathscr{H}(u,E,a)$ is decreasing, and if $a < 1$, then $\mathscr{H}(u,E,a) \leq a^{-1}\mathscr{H}(u,E,1)$. For $a=1$, we simply write $\mathscr{H}(u,E) = \mathscr{H}(u,E,1)$.

The main result of this section is the following
\begin{thm}\label{Thm-Lip}
Let $K$ satisfying~\eqref{ellipticity}, $\alpha \geq 0$, $f \in C(\rd)$ and $H$ satisfying~\hyperlink{H1}{(H1)}. Assume further~\hyperlink{H2}{(H2)} holds when $m \geq 2s + 1$.
For $R>0$, let $u \in C(\R^d) \cap L^1(\omega_s)$ be a viscosity solution to the problem
\begin{equation}\label{Lip-01}
\alpha u - I u + H(x,\grad u) - f(x) = 0 \quad \mbox{in} \ B_{R + 2}.
\end{equation}

Then, there exists a constant $C_R > 0$ such that
\begin{equation*}
|u(x) - u(y)| \leq C_R |x - y| \quad \mbox{for all} \ x, y \in B_R.
\end{equation*}

The constant $C_R$ can be estimated as follows: let $C_{H, R}$ as in (H1), and 
consider $\mathfrak{Z}_1, \mathfrak{Z}_2$ given in~\eqref{Zeta1}, ~\eqref{Zeta2} below. Then, 
for each $\theta_0 \in (0,1)$ small enough,  there exists $C > 0$, just depending on the data,
%$\theta_0, d, \lambda, \Lambda, m, s, \upkappa$,
but not on $R$ nor $u$ such that $C_R$ above has the form
\begin{equation*}
C_R = C C_{H, R+2}^{\frac{1}{1 - \theta_0}} \max \{ \mathfrak{Z}_1, \mathfrak{Z}_2 \}^{\frac{m - \theta_0}{1 - \theta_0}}
\end{equation*}
when $m \geq 2s + 1$, and
\begin{equation}\label{estBernstein}
C_R = C C_{H, R+2}^{\frac{1}{2s + 1 - m - \theta_0}} \mathfrak{Z}_2^{\frac{2s - \theta_0}{2s + 1 - m - \theta_0}}
\end{equation}
when $m < 2s + 1$.
\end{thm}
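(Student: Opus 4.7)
The plan is to use the Ishii--Lions doubling-of-variables technique, adapted to the fractional setting and to the superlinear gradient growth of $H$. Fix $R > 0$ and, on $B_{R+2}\times B_{R+2}$, consider the auxiliary function
$$
\Phi(x,y) := u(x) - u(y) - L\,\phi(|x-y|) - \sigma(|x|^2 + |y|^2),
$$
where $\phi(t) = t - \eta_0 t^{1+\theta_0}$ is a smooth, strictly increasing, concave modulus on a small interval $[0, r_*]$ (with $\eta_0 > 0$ and $\theta_0 \in (0,1)$ to be tuned), $\sigma > 0$ is a small localization parameter placing the supremum in the interior, and $L > 0$ is the candidate Lipschitz constant. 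The goal is to show that $\Phi \leq 0$ on $B_{R+2}\times B_{R+2}$ as soon as $L$ exceeds the asserted size; letting $\sigma \to 0$ then delivers the local Lipschitz estimate on $B_R$. The case $r_0 := |\bar x-\bar y| \geq r_*$ is trivially absorbed by choosing $L$ larger than a fixed multiple of $\mathrm{osc}_{B_{R+2}}(u)/\phi(r_*)$, so the real work is $r_0 \in (0, r_*]$.

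Suppose for contradiction that $\Phi$ attains a positive maximum at $(\bar x, \bar y)$ with $r_0 \in (0, r_*]$ and $\bar x, \bar y$ in the interior. The nonlocal Jensen--Ishii lemma (of Barles--Chasseigne--Ciomaga--Imbert type, see~\cite{BCCI14}) produces viscosity evaluations of~\eqref{Lip-01} at $\bar x$ as a subsolution and at $\bar y$ as a supersolution, sharing a common momentum $p := L\phi'(r_0)\frac{\bar x-\bar y}{r_0} + O(\sigma)$ with $|p| \leq 2L$, and with nonlocal terms split at a radius $\delta \in (0, r_0/4)$: the near part $I[B_\delta]$ is computed against the smooth barrier arising from $L\phi(|\cdot|)$ and the quadratic term, while the far part $I[B_\delta^c]$ is computed against $u$. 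Subtracting the two inequalities, the concavity of $\phi$ (through the pointwise bound $\phi''(t) \leq -c\,\eta_0\theta_0\, t^{\theta_0-1}$) combined with the ellipticity~\eqref{ellipticity} yields a favorable contribution of size $c L \eta_0 \theta_0 r_0^{\theta_0-1}\delta^{2-2s}$ (the \emph{good} term), while the far part is dominated by $C\delta^{-2s}\mathscr{H}(u,\cdot)$ and hence controlled by the constants $\mathfrak{Z}_1, \mathfrak{Z}_2$ defined in~\eqref{Zeta1}--\eqref{Zeta2}. Simultaneously,~\hyperlink{H1}{(H1)} gives
$$
H(\bar x, p) - H(\bar y, p) \leq C_{H,R+2}\, r_0\,(1 + |p|^m) \leq C_{H,R+2}\, r_0\,(1 + (2L)^m).
$$
In the subcritical regime $m < 2s+1$, after optimizing the splitting radius $\delta$ and using $r_0 \leq r_* \leq 1$, the nonlocal good term dominates the sum of the Hamiltonian error and the far nonlocal tail precisely when $L$ satisfies the lower bound~\eqref{estBernstein}; this produces the contradiction and hence the Lipschitz bound.

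In the supercritical regime $m \geq 2s+1$, the $L^m r_0$ Hamiltonian term overwhelms the nonlocal gain and the previous balance fails. Here hypothesis~\hyperlink{H2}{(H2)} enters: the inequality $|p|^m \leq \upkappa(H(\bar y, p) + \upkappa)$, combined with the supersolution equation at $\bar y$ to bound $H(\bar y, p) \leq f(\bar y) + Iu(\bar y) - \alpha u(\bar y)$, replaces the $L^m$ dependence by a dependence on the data, ultimately on $\max\{\mathfrak{Z}_1,\mathfrak{Z}_2\}$, and yields the first form of $C_R$. The main obstacle is the fine coordination of the three parameters $\theta_0$, $\eta_0$, and $\delta$: the exponent $\theta_0 > 0$ is indispensable for extracting a strictly coercive second-difference from the concave modulus (a linear modulus would give $\phi'' \equiv 0$), yet it must stay strictly below $2s+1-m$ in the subcritical regime for the exponent in~\eqref{estBernstein} to remain positive, a trade-off that precisely accounts for the $\theta_0$-dependence in the final constants. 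A secondary delicate point is the justification of the nonlocal Jensen--Ishii lemma with a singular test function and the uniform domination of the far nonlocal tails by $u \in L^1(\omega_s)$ through the quantities $\mathscr{H}(u,\cdot)$.
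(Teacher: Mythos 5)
Your overall frame (Ishii--Lions doubling with a concave modulus $t-\eta_0 t^{1+\theta_0}$, a cone-type good term of order $L\,\theta_0\,r_0^{\theta_0-1}\delta^{2-2s}$ from~\eqref{ellipticity}, and control of the far tails by $\mathscr{H}$) matches the paper's strategy, but the two steps where the stated exponents are actually produced contain genuine gaps. First, in the subcritical case your balance does not close as described: with only $r_0\le r_*\le 1$, the Hamiltonian error $C_{H,R+2}\,r_0(1+(2L)^m)$ is of size $L^m$ when $r_0\sim r_*$, while the good term is only of size $L\,r_0^{\theta_0+1-2s}\sim L$, so no choice of $L$ yields a contradiction for $m>1$. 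The missing ingredient is the maximum-point inequality $L\varphi(|\bar x-\bar y|)\le u(\bar x)-u(\bar y)\le \mathrm{osc}_{B_{R+2}}u$, i.e.\ $r_0\lesssim M_u/L$ (the paper's~\eqref{Lip-02}), which must be inserted into \emph{both} sides: it lower-bounds the good term by $c\,L^{2s-\theta_0}M_u^{\theta_0+1-2s}$ and, crucially, degrades the Hamiltonian term to $C_{H,R+2}M_u L^{m-1}$ (one power of $L$ less). Only then does one get $L^{2s+1-m-\theta_0}\lesssim C_{H,R+2}\mathfrak{Z}_2 M_u^{2s-1-\theta_0}$, i.e.\ the exponent in~\eqref{estBernstein} and the full range $m<2s+1$; your version would at best cover $m<2s$.

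Second, your treatment of $m\ge 2s+1$ does not work: the viscosity \emph{super}solution inequality at $\bar y$ gives a \emph{lower} bound on $H(\bar y,p)$, not the upper bound you need to convert $|p|^m$ into data via~\hyperlink{H2}{(H2)}; and at the subsolution point the nonlocal evaluation $I[B_\delta^c](u,\cdot,\bar x)$ is not a priori controlled (controlling it is the whole point of the argument), so you cannot extract a bound on $H$ from the equation there either. The paper's mechanism is different: it first proves an interior H\"older estimate with exponent $\gamma_0=\frac{m-2s}{m-1}$ (Proposition~\ref{P2.1}, via~\cite{BKLT15}, which is where~\hyperlink{H2}{(H2)} enters), and then uses $L|\bar x-\bar y|\le u(\bar x)-u(\bar y)\le C\mathfrak{Z}_1|\bar x-\bar y|^{\gamma_0}$ to get the improved smallness $|\bar x-\bar y|^{1-\gamma_0}\lesssim \mathfrak{Z}_1/L$; this extra gain in the good term is what absorbs the $M_uL^{m-1}$ Hamiltonian error, using the identities $\frac{2s-\gamma_0}{1-\gamma_0}=m$ and $\frac{2s-1}{1-\gamma_0}=m-1$. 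Two further technical points: the localization $\sigma(|x|^2+|y|^2)$ is not in $L^1(\omega_s)$, so its nonlocal evaluation diverges (the paper uses the bounded penalization $M_u\psi(x)$ instead, whose contribution is $\le C M_u$ and is absorbed into $\mathfrak{Z}_2$); and the far part should be split at radius $1$ (bounded by $\mathscr{H}(u,B_{R+2})$) with the intermediate annulus handled via $\Phi(\bar x+z,\bar y+z)\le\Phi(\bar x,\bar y)$, not bounded by $\delta^{-2s}\mathscr{H}$, which would blow up as $r_0\to 0$.
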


Before providing the proof of this result, let us briefly comment some previous related contributions about regularity. We firstly mention that for $s < 1/2$ and $m > 2s$, Lipschitz regularity follows from the classical arguments for eikonal-type problems, as can be seen in~\cite{BKLT15, BT16N}. 

A different approach is required in the case $s \geq 1/2$. Using Ishii-Lions method, Barles, Chasseigne, Ciomaga \& Imbert~\cite{BCCI12} prove Lipschitz regularity for equations with dominating diffusion. The regularizing effect of the ellipticity requires two main ingredients: the diffusion is of order $2s > 1$, and appropriate regularity on the data to control the effect of the gradient. The borderline case $s=1/2$ is addressed in~\cite{CGT-22} with the same arguments. In~\cite{BLT-17}, the authors get Lipschitz estimates for solutions of degenerate elliptic equations with Lipschitz data, following the (viscosity) weak Bernstein technique introduced in~\cite{Barles-91}. Here there is no restriction on the order of the nonlocality, and the regularity basically comes from the superlinear coercivity of the gradient.  Other results for the case $s > 1/2$ are available for equations with (at most) linear growth on the gradient, see for instance~\cite{BK22,CDV22}. 

Our approach is basically the one presented in~\cite{BCCI12}, but with appropriate modifications to deal with the unbounded solutions. The (Lipschitz) continuity of the data moderates the effect of the gradient term in the doubling variables procedure. Roughly speaking, it ``subtracts $1$ to the exponent" of the power-type growth of the gradient. This explains why we do not require the extra coercivity assumption \hyperlink{H2}{(H2)} when $m < 2s + 1$.  We stress on the fact that assumption (H1) could be weakened (say, assuming H\"older continuity on $x$ just as in~\cite{BCCI12}), but we prefer to keep the Lipschitz assumption on $H$ for simplicity. The more difficult scenario is when $m>2s$ (in fact, when $m \geq 2s + 1$), where we combine Ishii-Lions method with apriori  H\"{o}lder estimates from \cite{BKLT15}. We recall this for a sake of completeness.

\begin{prop}\label{P2.1}
Assume hypotheses of Theorem~\ref{Thm-Lip} hold with $m \geq 2s + 1$, and 
denote $\gamma_0=\frac{m-2s}{m-1}$. Then, there exists a constant $C > 0$ such that, for all $R > 1$ and each viscosity subsolution $u \in L^1(\omega_s)$ to~\eqref{Lip-01}, we have
$$
\sup_{B_{R+\frac{5}{4}}}\frac{|u(x)-u(y)|}{|x-y|^{\gamma_0}}\leq C R^{1 - \gamma_0} (A^{1/m} + {\rm osc}_{B_{R+2}} u),
$$
where 
\begin{equation}\label{defA}
A = A_R =
1 + 
\sup_{B_{R+2}}f^+ - \alpha \inf_{B_{R + 2}}u^- + \mathscr{H}(u, B_{R+2}). 
\end{equation}
\end{prop}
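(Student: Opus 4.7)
The strategy is a one-sided Ishii--Lions doubling argument that uses only the subsolution property together with the superlinear coercivity of $H$ in~\hyperlink{H2}{(H2)}. Since a H\"{o}lder seminorm bound is symmetric in its arguments, it suffices to fix $x_0 \in B_{R+5/4}$ and prove
\begin{equation*}
u(x) - u(x_0) \leq L |x - x_0|^{\gamma_0} \quad \text{for all}\ x \in B_{R+5/4},
\end{equation*}
with $L$ meeting the claimed bound; exchanging the roles of $x$ and $x_0$ then yields the two-sided H\"older estimate. We proceed by contradiction, considering
\begin{equation*}
\Psi(x) := u(x) - u(x_0) - L \omega(|x - x_0|) - M \eta(x),
\end{equation*}
where $\omega \in C^2([0,\infty))$ extends $r \mapsto r^{\gamma_0}$ near $r = 0$ and saturates far from the origin, and $\eta \geq 0$ is a smooth cutoff that vanishes on $B_{R+5/4}$ and grows rapidly outside $B_{R+2}$. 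Choosing $M$ of order $\mathrm{osc}_{B_{R+2}} u$ localizes any positive supremum of $\Psi$ at an interior point $\bar x \in B_{R+5/4} \setminus \{x_0\}$.

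At this $\bar x$ the test function $\varphi(x) := L \omega(|x - x_0|) + M \eta(x)$ is $C^2$, so for any $\delta \in (0, |\bar x - x_0|/2)$ the viscosity subsolution inequality reads
\begin{equation*}
\alpha u(\bar x) - I[B_\delta](\varphi, \bar x) - I[B_\delta^c](u, \grad \varphi(\bar x), \bar x) + H(\bar x, \grad \varphi(\bar x)) \leq f(\bar x).
\end{equation*}
From the maximum property $u(\bar x + y) - u(\bar x) \leq \varphi(\bar x + y) - \varphi(\bar x)$ globally, and hence $I[B_\delta^c](u, \grad\varphi(\bar x), \bar x) \leq I[B_\delta^c](\varphi, \grad\varphi(\bar x), \bar x)$, up to an error of order $\mathscr{H}(u, B_{R+2})$ that captures the tail behavior of $u$ far from $B_{R+2}$. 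Combining with the inner piece, the sub-inequality implies
\begin{equation*}
-I \varphi(\bar x) + H(\bar x, \grad \varphi(\bar x)) \leq f(\bar x) - \alpha u(\bar x) + C \mathscr{H}(u, B_{R+2}).
\end{equation*}
For $\bar x$ away from $x_0$ one has $|\grad \varphi(\bar x)| \geq c L |\bar x - x_0|^{\gamma_0 - 1}$, provided $L$ dominates $M \|\grad \eta\|_\infty R^{1-\gamma_0}$ (which is the origin of the $\mathrm{osc}_{B_{R+2}} u$ contribution in the final bound), so \hyperlink{H2}{(H2)} yields $H(\bar x, \grad \varphi(\bar x)) \geq c \upkappa^{-1} L^m |\bar x - x_0|^{m(\gamma_0 - 1)} - \upkappa$. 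Since $m \geq 2s + 1$ together with $s > 1/2$ gives $\gamma_0 < 2s$, the operator $-I$ applied to $|\cdot - x_0|^{\gamma_0}$ produces a positive contribution of homogeneous order $\gamma_0 - 2s$, and using $K(y) \geq \lambda |y|^{-(d + 2s)}$ this translates to $-I \varphi(\bar x) \geq c \lambda L |\bar x - x_0|^{\gamma_0 - 2s} - C M$.

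The final step is the balance: by the defining relation $\gamma_0 = (m - 2s)/(m-1)$ the two principal exponents coincide, $m(\gamma_0 - 1) = \gamma_0 - 2s$. Inserting the lower bound $|\bar x - x_0|^{\gamma_0 - 2s} \geq (2R)^{\gamma_0 - 2s}$ and absorbing the subdominant $L$-linear nonlocal term into the $L^m$ term (valid once $L$ is large), we arrive at
\begin{equation*}
\frac{c}{\upkappa} L^m (2R)^{\gamma_0 - 2s} \leq A + C \mathrm{osc}_{B_{R+2}} u,
\end{equation*}
which, after solving for $L$ and using $(2s - \gamma_0)/m = 1 - \gamma_0$, yields $L \leq C R^{1 - \gamma_0}(A^{1/m} + \mathrm{osc}_{B_{R+2}} u)$ as claimed. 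The principal technical obstacle is the sharp nonlocal lower bound for $-I\varphi(\bar x)$ under the merely elliptic, possibly non-symmetric kernel~\eqref{ellipticity}: the drift correction $\chi_B(y) y \cdot \grad \varphi(\bar x)$ must be handled carefully, and one has to verify that the saturation of $\omega$ away from the pure power $|\cdot - x_0|^{\gamma_0}$ (needed to enforce the global domination $u - \varphi(\cdot - u(x_0)) \leq 0$) does not spoil the negative sign of the fractional-Laplacian-type evaluation, which rests precisely on $\gamma_0 < 2s$, i.e., on $s > 1/2$.
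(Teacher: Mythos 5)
Your overall architecture (fix $x_0$, penalize with $L\omega(|\cdot-x_0|)$ plus a cutoff, use the coercivity \hyperlink{H2}{(H2)} at the maximum point, and exploit the exponent identity $m(\gamma_0-1)=\gamma_0-2s$) is the standard route to this H\"older bound, and it is genuinely different from what the paper does: the paper does not reprove the estimate, but normalizes $u$ by its infimum, localizes with a cutoff (which is where the $\mathscr{H}(u,B_{R+2})$ and oscillation terms enter), and then invokes the interior H\"older estimate of \cite[Theorem~2.2]{BKLT15} as a black box. Your self-contained version is in principle acceptable, but it contains a concrete error at the decisive step.

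The gap is the sign you assign to the nonlocal term. You claim that $-I$ applied to $|\cdot-x_0|^{\gamma_0}$ gives a \emph{positive} contribution of order $L|\bar x-x_0|^{\gamma_0-2s}$, justified by $K(y)\geq \lambda|y|^{-(d+2s)}$ and by $\gamma_0<2s$. This is false in general: for $d\geq 2$ and $\beta\in(0,1)\subset(0,2s)$ one has $(-\Delta)^s|x|^{\beta}=c(d,s,\beta)|x|^{\beta-2s}$ with $c(d,s,\beta)<0$ away from the origin (the tangential convexity and the far-field growth of $|x|^\beta$ dominate the radial concavity; already $-\Delta|x|^\beta=-\beta(d+\beta-2)|x|^{\beta-2}<0$). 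Moreover, no lower bound on $-I\varphi$ can follow from the lower ellipticity bound alone, since the integrand $\varphi(\bar x+y)-\varphi(\bar x)-\chi_B(y)\,y\cdot\grad\varphi(\bar x)$ does not have a fixed sign; you need the upper bound $K\leq\Lambda|y|^{-(d+2s)}$ as well. The correct statement is the two-sided estimate $|I\varphi(\bar x)|\leq C\Lambda\, L\,|\bar x-x_0|^{\gamma_0-2s}+CM$, so the nonlocal term is a \emph{bad} term. The proof still closes, but by a different mechanism than the one you describe: since the bad nonlocal term and the good Hamiltonian term $\upkappa^{-1}L^m|\bar x-x_0|^{m(\gamma_0-1)}$ have the same spatial homogeneity (this is the real role of $\gamma_0=\frac{m-2s}{m-1}$), and since $m>1$, the term linear in $L$ is absorbed by the term of order $L^m$ once $L^{m-1}$ exceeds a constant depending only on the data; one then solves $L^m(2R)^{\gamma_0-2s}\lesssim A+\mathrm{osc}_{B_{R+2}}u\cdot(\dots)$ for $L$ exactly as you indicate. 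You should rewrite the nonlocal estimate accordingly (and also be careful that, since $u$ is only in $L^1(\omega_s)$ outside $B_{R+2}$, the maximum of $\Psi$ must be taken over $\overline{B_{R+2}}$ rather than localized by a growing cutoff, with the exterior contribution to the viscosity inequality bounded by $\mathscr{H}(u,B_{R+2})$, as in the paper's localization).
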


\begin{proof}
Set $\bar{u}=u-\inf_{B_{R+2}} u$, and notice that $\bar u$ is nonnegative in $B_{R + 2}$ and that $\mathrm{osc}_{B_{R + 2}} \bar u = \mathrm{osc}_{B_{R + 2}} u$. It is direct to see, in view of \hyperlink{H2}{(H2)} and the invariance under additive constants of $I$, that $\bar u$ solves the inequality
$$
-I \bar u + \frac{1}{\upkappa} |\grad \bar u|^m \leq \upkappa + f - \alpha \inf_{B_{R + 2}} u^- \quad \mbox{in} \ B_{R + 2},
$$
in the viscosity sense.

Next, consider a smooth cut-off function $\eta:\rd\to [0, 1]$ satisfying
\[
\eta(x)=\left\{
\begin{array}{ll}
   1  & \text{for} \; x\in B_{R+\frac{7}{4}}, \\[2mm]
0  & \text{for} \; x\in B^c_{R+\frac{15}{8}}.
\end{array}
\right.
\]

Define $v=\eta \bar{u}$. We have that $\mathrm{osc}_{B_{R + 2}} v \leq \mathrm{osc}_{B_{R + 2}} u$. By linearity we have $I \bar u = I v + I((1 - \eta)\bar u)$, and therefore we can write
% $$-I \bar{u} + \frac{1}{\upkappa }|\grad \bar{u}|^m \leq \upkappa + \sup_{B_{R+2}}|f| - \beta \min_{B_{R+2}}u + 
% {\rm osc}_{B_{R+2}} u\quad \text{in}\; B_{R+2},$$
% which in turn, implies
$$
-I v + \frac{1}{\upkappa }|\grad v|^m \leq \upkappa + \sup_{B_{R+2}} f^+ - \alpha \inf_{B_{R+2}}u^- + I((1-\eta)\bar{u})
\quad \text{in}\; B_{R+\frac{7}{4}}.
$$

Since $\eta(x+y)=1$ for $x\in B_{R+\frac{13}{8}}$ and $|y|\leq \nicefrac{1}{8}$, it follows that for each $x \in B_{R + \frac{13}{8}}$ we have
\begin{align*}
I((1-\eta)\bar{u})(x) = &
\int_{|y| \geq \frac{1}{8}} (1 - \eta(x + y)) \bar u(x + y) K(y)dy \\
%\leq & \int_{|y| \geq \frac{1}{8}} (1 - \eta(x + y)) (u(x + y) - \inf_{B_{R + 2}}u) K(y)dy \\
\leq &  C_1 \Lambda \left( {\rm osc}_{B_{R+2}} u + \sup_{x\in B_{R+2}}\int_{|y|\geq \frac{1}{8}} \frac{|u(x+y)-u(x)|}{|y|^{d+2s}} dy\right) \\
\leq & C_1 \Lambda \left( {\rm osc}_{B_{R+2}} u + 2 * 8^{d + 2s} \mathscr{H}(u, B_{R + 2}) \right),
\end{align*}
for some $C_1 > 0$ just depending on $d,s$. Thus, for some constant $C_2 >0$ just depending on $d,s, \upkappa$ and $\Lambda$ we have
$$
-I v + \frac{1}{\upkappa }|\grad v|^m\leq C_2 \left(1 + {\rm osc}_{B_{R+2}} u + 
\sup_{B_{R+2}}f^+ - \alpha \inf_{B_{R + 2}}u^- + \mathscr{H}(u, B_{R+2}) \right) \quad \mbox{in} \ B_{R + \frac{7}{4}},
$$
% and notice that the expression in brackets is $A$ in the statement of the theorem. Dividing by $c = 1 + \mathrm{osc}_{R_{R + 2}} u$, defining $\tilde v = c^{-1} v$ we conclude that
% $$
% -I \tilde v + \frac{1}{\upkappa} |\grad \tilde v|^m \leq C_2 \left(1 + {\rm osc}_{B_{R+2}} u + 
% \sup_{B_{R+2}}f^+ - \alpha \inf_{B_{R + 2}}u^- + \mathscr{H}(u, B_{R+2}) \right) \quad \mbox{in} \ B_{R + \frac{7}{4}},
% $$

Thus, denoting $\tilde A$ the expression in brackets, we can use the interior H\"older estimates of \cite[Theorem~2.2]{BKLT15} to conclude that %\ftn{AB: $|x-y|^{\gamma_0}$ was missing from the denomenator, so I added}
$$
\frac{|v(x) - v(y)|}{|x-y|^{\gamma_0}} \leq C(\tilde A^{1/m} + \mathrm{osc}_{B_{R + 2}} v) \quad \mbox{for all} \ x, y \in B_R, \quad |x - y| \leq \rho,
$$
for some $C, \rho > 0$ not depending on $R$.

Using that $v = \bar u$ in $B_R$ and since $m > 2s > 1$ we arrive at
\begin{equation*}
\frac{|u(x) - u(y)|}{|x-y|^{\gamma_0}} \leq C (A^{1/m} + \mathrm{osc}_{B_{R + 2}} u) \quad \mbox{for all} \ x,y \in B_R, \ |x - y| \leq \rho,
\end{equation*}
and using this estimate, a simple analysis yields to the estimate of $[u]_{C^{\gamma_0}(B_R)}$.
\end{proof}

\medskip

Now we are in a position to provide the

\medskip

\noindent
\textit{Proof of Theorem~\ref{Thm-Lip}:}
For $\theta \in (0,1)$ to be fixed later, consider the function $\varphi: [0,+\infty) \to [0,+\infty)$ defined as 
$
\varphi(t) = (t - t^{1 + \theta}/2)
$
for $t \in [0,1/2]$, and $\varphi(t) = \varphi(1/2)$ for $t > 1/2$.
Let $\psi : \R^d \to \R_+$ be a smooth, bounded function, radially nondecreasing and such that $\psi(x) = 0$ for $|x| \leq R$, and $\psi(x) = 1$ for $|x| \geq R+1/2$. In particular, $\psi$ has bounded first and second derivatives, independent of $R$. 

For simplicity, denote
$$
M_u=%\sup_{B_{R+2}} u- \inf_{B_{R + 2}} u=
{\rm osc}_{B_{R + 2}} u.
$$

For $L> 0$, now define
\begin{equation}\label{penalization}
\Phi(x,y) = \Phi_{L}(x,y) := u(x) - u(y) - L \varphi(|x - y|) - M_u\, \psi(x),
\end{equation}
and let 
\begin{equation}\label{defM}
    M := \sup_{x, y \in \bar B_{R + 2}} \Phi(x,y).
\end{equation}

We remark that the supremum is attained by continuity. 
We claim that there exists $L  > 1$ such that $M \leq 0$. This implies the desired local Lipschitz regularity for the solution.

Suppose, on the contrary, that for all $L$ large enough (to be given later on),
there exists a solution $u$ to \eqref{Lip-01} so that $M>0$.
Let $(\bar{x}, \bar{y}) \in \bar{B}_{R+2} \times \bar{B}_{R+2}$ be point attaining the maximum in~\eqref{defM}.
Since $M>0$, we have $\bar x \neq \bar y$ for all $L$, and from the definition of $\psi$ it follows that $|\bar{x}|\leq R+1/2$. Furthermore, if we let $L$ large enough to satisfy
\begin{equation}\label{firstchoice}
L\varphi(4^{-1}) > M_u,
\end{equation}
then we also have $\abs{\bar{x}-\bar{y}}\leq 1/4$, implying $|\bar{y}|\leq R+3/4$.

Since $\Phi(\bar{x},\bar{x})\leq \Phi(\bar{x},\bar{y})$, we also get
\begin{equation*}%\label{cota}
L \varphi(|\bar x - \bar y|)\leq u(\bar x) - u(\bar y)  \leq M_u.
\end{equation*}
for all $L$. In particular, if we set $L$ to satisfy \eqref{firstchoice}, we must have $\varphi(|\bar{x}-\bar{y}|)\geq \frac{1}{2} |\bar{x}-\bar{y}|$.
Thus, from the above estimate we obtain
\begin{equation}\label{Lip-02}
L |\bar{x}-\bar{y}|\leq 2(u(\bar x) - u(\bar y)) \leq 2 M_u.
\end{equation}

Now we split the proof in two cases.

\medskip

\noindent
{\bf Case 1: $m>2s$.} 
In this case we will require the estimate in Proposition~\ref{P2.1}, so we recall $A$ from~\eqref{defA} and that 
$$
\gamma_0 := \frac{m - 2s}{m - 1}.
$$

Since the functions $t \mapsto \frac{1}{1 - t}$ and $t \mapsto \frac{m - t}{1 - t}$ are increasing in $t \in (0,1)$, we will provide a proof for a 
{ $t=\theta_0$  sufficiently small}. 

We consider $L$ in~\eqref{penalization} with the form
\begin{equation}\label{Rio1}
L = \widehat{C} C_{H,R+2}^{\frac{1}{1 - \theta_0}}\mathfrak{Z}^{\frac{m - \theta_0}{1 - \theta_0}},
\end{equation}
for some $\widehat{C} > 0$ which will be taken large just in terms of the data and $\theta_0$, with
$\theta_0$ taken suitably small depending on  $d$, $s$ and $m$. The constant $C_{H,R}$ appears in (H1), and $\mathfrak{Z} = \max \{ \mathfrak{Z}_1, \mathfrak{Z}_2 \}$, with 
\begin{equation}\label{Zeta1}
\mathfrak{Z}_1 = R^{1 - \gamma_0}(A^{1/m} + \mathrm{osc}_{B_{R + 2}} u)
\end{equation}
and 
\begin{equation}\label{Zeta2}
\mathfrak{Z}_2 = 1 + M_u + \mathrm{osc}_{B_{R + 2}} f + \mathscr{H}(u, B_{R + 2}).
\end{equation}
%Define
% $$
% \mathfrak{Z}_1=\left(1+
% {\rm osc}_{B_{R+2}} u + 
% \sup_{B_{R+2}}(|f| + \beta u^-) + \mathscr{H}(u, B_{R+2}, \nicefrac{1}{8})\right).
% $$

By Proposition~\ref{P2.1} there exists a constant $C$ such that
\begin{equation}\label{cotaHolder}
|u(x) - u(y)| \leq C\, \mathfrak{Z}_1 |x - y|^{\gamma_0} \quad \mbox{for all} \ x, y \in B_{R+\frac{3}{2}}.
\end{equation}

Combining the last estimate together with the first inequality in~\eqref{Lip-02}, 
we conclude that
\begin{equation}\label{cota1}
|\bar x - \bar y|^{1 - \gamma_0} \leq 2 C \mathfrak{Z}_1 L^{-1}.
\end{equation}

Set $\phi(x,y) = L\varphi(|x - y|) + M_u \psi(x)$, which will play the role of a test function touching $u$ at $\bar x$ (for subsolution's inequality), and at $\bar y$ (for supersolution's inequality). We use the viscosity inequalities, and write, for all $\delta\in ( 0, 1/4]$ that
\begin{align*}
     -I[B_\delta](\phi(\cdot, \bar y), \bar x) - I[B_\delta^c](u, \grad_x \phi(\bar x, \bar y), \bar x) + H(\bar x, L \bar p + M_u \grad\psi(\bar x)) + \alpha u(\bar{x}) -f(\bar{x})& \leq 0, \\
     -I[B_\delta](-\phi(\bar x, \cdot), \bar y) - I[B_\delta^c](u, -\grad_y \phi(\bar x, \bar y), \bar y) + H(\bar y, L \bar p) + \alpha u(\bar{y})-f(\bar{y})& \geq 0,
\end{align*}
where $\bar p = \varphi'(|\bar x - \bar y|) \frac{\bar x - \bar y}{\abs{\bar x - \bar y}}$.
Subtracting both inequalities we arrive at
\begin{align}\label{ineqvisc}
    H_1 \leq I_1 + I_2 + I_3 + I_4,
 \end{align}
where 
\begin{align*}
 I_1 &= I[B_\delta](\phi(\cdot, \bar y), \bar x) + I[B \cap B_\delta^c](u, \grad_x \phi(\bar x, \bar y), \bar x) 
      \\
 I_2 &= -I[B_\delta](-\phi(\bar x, \cdot), \bar y) - I[B\cap B_\delta^c](u, -\grad_y \phi(\bar x, \bar y), \bar y) 
     \\
I_3 &=  I[B^c](u, \bar x) - I[B^c](u, \bar y),
      \\
I_4 &= \alpha (u(\bar{y})-u(\bar{x})) + (f(\bar{x})-f(\bar{y})),
      \\
H_1 &= H(\bar x, L \bar p + M_u \grad\psi(\bar x)) - H(\bar y, L \bar p).
\end{align*}

Now we follow the approach of Barles, Chasseigne,  Ciomaga \& Imbert \cite{BCCI12} to treat the nonlocal terms. Denote by 
$\bar a = \bar x - \bar y$. For some $\delta_0\in (1, \frac{1}{4}], \eta_0 \in (0,1)$ to be fixed, we define 
$$
\mathcal C = \mathcal C_{\delta_0, \eta_0} := \{ z \in B_{\delta_0} : |z \cdot \bar a| \geq (1 -  \eta_0) |\bar a| |z| \}.
$$

Using the inequality 
$$
\Phi(\bar x, \bar y) \geq \Phi(\bar x + z, \bar y)
\Rightarrow u(\bar{x}+z)-u(\bar{x})\leq \phi(\bar{x}+z, \bar y)-\phi(\bar{x}, \bar y),
$$
 we obtain
\begin{equation*}
    I_1 \leq I[\mathcal C \cup B_\delta](\phi(\cdot, \bar y), \bar x) + I[B \cap (\mathcal C \cup B_\delta)^c](u, \grad_x \phi(\bar x, \bar y), \bar x),
\end{equation*}
and similarly, using $\Phi(\bar x, \bar y) \geq \Phi(\bar x, \bar y+ z)$ we get
$$
I_2 \leq -I[\mathcal C \cup B_\delta](-\phi(\bar x,  \cdot), \bar y) - I[B \cap (\mathcal C \cup B_\delta)^c](u, -\grad_y \phi(\bar x, \bar y), \bar y).
$$

Using that $\Phi(\bar x + z, \bar y + z) \leq \Phi(\bar x, \bar y)$ for $|z|\leq 1$, we also have
\begin{align*}
     & I[B \cap (\mathcal C \cup B_\delta)^c](u, \grad_x \phi(\bar x, \bar y), \bar x) - I[B \cap (\mathcal C \cup B_\delta)^c](u, -\grad_y \phi(\bar x, \bar y), \bar y) \\
     \leq & M_u I[B \cap (\mathcal C \cup B_\delta)^c] (\psi, \bar x).
\end{align*}

Thus combining the above estimate and the definition of $\phi$ we conclude that
\begin{align*}
I_1 + I_2 \leq & I[\mathcal C \cup B_\delta](L \varphi(|\cdot - \bar y|, \bar x) - I[\mathcal C \cup B_\delta](-L \varphi(|\bar x - \cdot|), \bar y) \\
& + M_u I[B](\psi, \bar x).
\end{align*}

Using a second-order Taylor expansion for $\psi$, the last term in the above inequality can be estimated as
\begin{align*}
I[B](\psi, \bar x) \leq \frac{1}{2} \| D^2 \psi  \|_{L^\infty(B_{\frac{1}{4}})} \int_{B} |z|^2 K(z) dz
\leq  C_2,
\end{align*}
where $C_2 > 0$ depends only on $d,s$ and $\Lambda$. 

At this point, we can send $\delta \to 0$, and by the smoothness of $\varphi, \psi$  we infer that
\begin{equation}\label{Lip-03}
    I_1 + I_2 \leq LI[\mathcal C](\varphi(|\cdot - \bar y|, \bar x) +L I[\mathcal{C}]( \varphi(|\bar x - \cdot|), \bar y) + M_u C_2.
\end{equation}

Since the first two terms in the right-hand side of the above inequality can be estimated in the same fashion and from which we obtain the same upper bound, we concentrate on the first one.
We follow closely the arguments of~\cite{BCCI12}. Applying Taylor's expansion, we note that
\begin{align*}
    I[\mathcal{C}](\varphi(|\cdot - \bar y|, \bar x) = \frac{1}{2}\int_{\mathcal C} \sup_{|t| \leq 1} \Big{\{} & \frac{\varphi'(|\bar a + tz|)}{|\bar a + tz|} (|z|^2 - \langle \widehat{\bar a + tz}, z \rangle^2) \\
    & + \varphi''(|\bar a + tz|)\langle \widehat{\bar a + tz}, z \rangle^2 \Big{\}} K(z)dz,
\end{align*}
where $\widehat{e}=\frac{1}{|e|}e$, the unit vector along the direction 
$e\in\rd\setminus\{0\}$.

We consider $\delta_0 = \delta_1 |\bar a|$ with $\delta_1 \in (0,1)$ to be fixed. Let $z \in \mathcal C$ and $t \in [-1,1]$. Then,
\begin{equation*}
    |\langle \bar a + tz, z \rangle| \geq (1 - \eta_0 - \delta_1)|\bar a||z|.
\end{equation*}

Similarly, we also have
\begin{equation*}
 (1 - \delta_1) |\bar a| \leq   |\bar a + tz| \leq (1 + \delta_1) |\bar a|.
\end{equation*}

Thus, using the fact that $\varphi' \geq 0$ and $\varphi'' \leq 0$ in the above integration, and denoting 
$$
\tilde \eta = \frac{1 - \eta_0 - \delta_1}{1 + \delta_1},
$$ 
we arrive at
\begin{align*}
    I[\mathcal C](\varphi(|\cdot - \bar y|, \bar x) \leq \frac{1}{2} \int_{\mathcal C} \sup_{|t|\leq 1} \Big\{  ( 1 - \tilde \eta^2)\frac{\varphi'(|\bar a + tz|)}{|\bar a|(1 - \delta_1)} + \tilde \eta^2 \varphi''(|\bar a + tz|) \Big\}
    |z|^2 K(z) dz.
\end{align*}

We set $\delta_1, \eta_0 \in (0,1)$ small enough so that
\begin{equation*}
    \frac{1 - \tilde \eta^2}{1 - \delta_1} \leq 4 (\eta_0 + 2 \delta_1), \ \tilde \eta^2 \geq 1/2.
\end{equation*}

By the definition of $\varphi$, we then conclude that
\begin{align*}
    I[\mathcal C](\varphi(|\cdot - \bar y|, \bar x) \leq \frac{1}{2} \int_{\mathcal C} [ & 4 (\eta_0 + 2\delta_1) |\bar a|^{-1} -2^{-2}\theta(1 + \theta) |\bar a|^{\theta - 1} ]|z|^2 K(z) dz.
\end{align*}

Finally, we consider $\eta_0 = \epsilon_0 |\bar a|^\theta, \delta_1 = \epsilon_0 |\bar a|^\theta$ for some $\epsilon_0 > 0$ small to be fixed, to arrive at
\begin{align*}
    I[\mathcal C](\varphi(|\cdot - \bar y|, \bar x) \leq \frac{12 \epsilon_0 -2^{-2}\theta(1 + \theta))|\bar a|^{\theta - 1}}{2} \int_{\mathcal C} |z|^2 K(z) dz.
\end{align*}

Thus, for all $\theta \in (0,1)$, there exists $\epsilon_0=\epsilon_0(\theta)$ small enough such that 
\begin{align*}
    I[\mathcal C](\varphi(|\cdot - \bar y|, \bar x) \leq -2^{-4}\theta(1 + \theta)|\bar a|^{\theta - 1} \int_{\mathcal C} |z|^2 K(z)dz,
\end{align*}
and by the lower bound in the ellipticity condition~\eqref{ellipticity}, we conclude that (see \cite[Example~1]{BCCI12})
\begin{align*}
    I[\mathcal C](\varphi(|\cdot - \bar y|, \bar x) \leq - c_\theta \lambda |\bar a|^{\theta - 1} \eta_0^{\frac{d - 1}{2}} \delta_0^{2 - 2s},
\end{align*}
for some constant $c_\theta>0$.
By the choice of $\delta_0, \eta_0$, we then obtain
\begin{equation*}
    I[\mathcal C](\varphi(|\cdot - \bar y|, \bar x) \leq -\tilde c_\theta |\bar a|^{\frac{\theta(d + 1)}{2} + 2(\theta + 1)(1 - s) - 1}, 
\end{equation*}
fo some $\tilde c_\theta
 > 0$ not depending on $L$. Putting this estimate in \eqref{Lip-03}, we get 
\begin{equation*}
I_1 + I_2 \leq - 2\tilde{c}_\theta L |\bar a|^{\frac{\theta(d + 1)}{2} + 2(\theta + 1)(1 - s) - 1} + C_2 M_u.
\end{equation*}

It is easy to see that
$$
I_3 \leq 2 \Lambda \sup_{x\in B_{R + 2}} \int_{B_1^c} |u(x+y)-u(x)|\frac{dy}{|y|^{d+2s}} \leq 
4 \Lambda \mathscr{H}(u, B_{R + 2}),
$$
and since $u(\bar{x})-u(\bar{y})>0$,
$$ 
I_4\leq \alpha (u(\bar y) - u(\bar x)) + \mathrm{osc}_{B_{R + 2}} f \leq \mathrm{osc}_{B_{R + 2}} f.
$$

Thus, gathering the above estimates, the right-hand side in \eqref{ineqvisc} can be estimated as
\begin{equation}\label{Lip-04}
I_1 + I_2 + I_3 + I_4 \leq -2 \tilde{c}_\theta L |\bar a|^{\theta((d + 1)/2 + 2(1 - s)) + 1 - 2s} + C_3 \mathfrak{Z}_2,
\end{equation}
for some constant $C_3 > 0$ depending only on $d,s$ and $\Lambda$, and $\mathfrak{Z}_2$ is given by \eqref{Zeta2}.

Since $2s>1$, we can fix $\theta$ small enough so that the exponent of
 $|\bar a|$ above is negative. We fix this $\theta$, and denoting $\tilde \theta = \theta((d + 1)/2 + 2(1 - s))$ for simplicity, 
from \eqref{cota1} we obtain  that
$$
L|\bar a|^{\tilde \theta + 1 - 2s} \geq L \tilde C \mathfrak{Z}_1^{\frac{\tilde \theta + 1 - 2s}{1 - \gamma_0}} L^{-\frac{\tilde \theta + 1 - 2s}{1 - \gamma_0}} = \tilde C \mathfrak{Z}_1^{\frac{\tilde \theta + 1 - 2s}{1 - \gamma_0}} L^{\frac{2s - \gamma_0 - \tilde \theta}{1 - \gamma_0}},
$$
for some $\tilde C$ depending only on $d, s, m, \Lambda$. Plugging this into~\eqref{Lip-04} we arrive at
\begin{equation}\label{Lip-045}
I_1 + I_2 + I_3 + I_4 \leq -2 \tilde{c}_\theta \tilde C \mathfrak{Z}_1^{\frac{\tilde \theta + 1 - 2s}{1 - \gamma_0}} L^{\frac{2s - \gamma_0 - \tilde \theta}{1 - \gamma_0}} + C_3 \mathfrak{Z}_2,
\end{equation}

\medskip
 
To estimate the Hamiltonian we use \hyperlink{H1}{(H1)} and~\eqref{Lip-02} to conclude that
\begin{align*}
    H_1 \geq & - C_{H, R+2} \left( (1+ L^m) |\bar x - \bar y| + (1+ L^{m-1} + (M_u|\grad\psi(\bar{x})|)^{m-1}) M_u |\grad\psi(\bar{x})|\right)
     \\
    \geq & -\tilde C_4 C_{H,R+2}(M_u L^{m-1} + (M_u)^m + 1), 
\end{align*}
for some constant $\tilde C_4 > 0$. Since by \eqref{firstchoice} we have
$$L^{m-1}M_u\geq \frac{1}{(\varphi(4^{-1}))^{m-1}} M_u^m,$$
we get from above that
\begin{equation}\label{Lip-05}
    H_1\geq - C_4 C_{H, R+2} (M_u L^{m-1}+1),
\end{equation}
for some constant $C_{4}$.

Replacing \eqref{Lip-045} and \eqref{Lip-05} into~\eqref{ineqvisc}, we obtain
\begin{equation*}
    2 \tilde{c}_\theta \mathfrak{Z}_1^{\frac{\tilde \theta + 1 - 2s}{1 - \gamma_0}} L^{\frac{2s - \gamma_0 - \tilde \theta}{1 - \gamma_0}} \leq C_3\mathfrak{Z}_2 + C_4 C_{H, R+2} (M_u L^{m-1}+1).
\end{equation*}

At this point, we notice that 
$$
\frac{2s - \gamma_0}{1 - \gamma_0} = m; \quad \frac{2s - 1}{1 - \gamma_0} = m - 1,
$$
from which, denoting $\theta_0 = \frac{\tilde \theta}{1 - \gamma_0}$, we can write the last inequality as
$$
\tilde c_{\theta} \mathfrak{Z}_1^{1 -m + \theta_0} L^{m -\theta_0} \leq C_3\mathfrak{Z}_2 + C_4 C_{H, R+2} (M_u L^{m-1}+1),
$$
for some $\tilde c_{\theta} > 0$. Since $\max \{ 1, M_u \} \leq \mathfrak{Z}_2$ we obtain
$$
\tilde c_{\theta} \mathfrak{Z}_1^{1 -m + \theta_0} L^{m - \theta_0} \leq C_5 C_{H, R+2} \mathfrak{Z}_2 L^{m-1},
$$
from which we arrive at
$$
\tilde c_\theta L^{1 - \theta_0} \leq C_5 \mathfrak{Z}_1^{m -1 - \hat\theta} C_{H, R+2} \mathfrak{Z}_2.
$$
Now, fix $\theta$ small enough such that  (see the definition $\tilde\theta$)
$$
{ \theta_0= \theta\frac{(d + 1)/2 + 2(1 - s)}{1-\gamma_0} < 1}.
$$
{ We choose $t=\theta_0$ as mentioned before}. Then, taking $\mathfrak{Z} = \max \{ \mathfrak{Z}_1, \mathfrak{Z}_2 \}$ we have
$$
\tilde c_{\theta} L^{1 - \theta_0} \leq C_6 C_{H,R+2} \mathfrak{Z}^{m - \theta_0},
$$
from which
$$
L \leq \bar C C_{H,R+2}^{\frac{1}{1 - \theta_0}}\mathfrak{Z}^{\frac{m - \theta_0}{1 - \theta_0}},
$$
for some $\bar C > 1$ just depending on the data and the choice of $\theta$. 

Then, by the choice~\eqref{Rio1}
we arrive at a contradiction by taking $\widehat{C}$ large enough in comparison to $\bar C$.  For instance, we can let $\widehat{C}=2\bar{C}$.
 Thus, taken {$\widehat{C}$} in this way ({which also fixes $L$}), we must have
$M\leq 0$. In particular,
\begin{equation*}
    \sup_{x,y \in B_{R + 2}} \{ u(x) - u(y) - L \varphi(|x - y|) - 
    M_u \psi(x) \} \leq 0,
\end{equation*}
implying, 
$$
u(x) - u(y) \leq L \varphi(|x - y|), \quad \mbox{for all} \ x, y \in  B_R.
$$

\medskip

\noindent
{\bf Case 2:  $m < 2s + 1$.} {Note that for $m\in (2s, 2s+1)$, the  proof is covered by Case 1, but the proof in this case does not require Proposition~\ref{P2.1}.}
In this case we do not need to use the 
H\"older regularity as in \eqref{cotaHolder}, and therefore from~\eqref{Lip-02} we can infer that
\begin{equation}\label{controlLcase2}
L|\bar x - \bar y| \leq 2M_u.
\end{equation}

The choice of $L$ takeing place of~\eqref{Rio1} in the previous case this time is
$$
L = \widehat{C} \mathfrak{Z}_2^{\frac{2s - \theta_0}{2s + 1 - m - \theta_0}} C_{H, R+2}^{\frac{1}{2s + 1 - m - \theta_0}},
$$
for some $\widehat{C} > 0$ large enough.

Following the same arguments as in the previous case, after localization and testing the problem at points $\bar x, \bar y$, we can estimate the nonlocal terms as before to conclude the same estimate~\eqref{Lip-04}. Estimates for $I_4$ and $H_1$ follow exactly the same lines as before, and we use~\eqref{controlLcase2} to arrive at
$$
c_1 L^{2s - \tilde \theta} M_u^{\tilde \theta + 1 - 2s} \leq C_2 L |\bar a|^{\tilde \theta + 1 - 2s} \leq C_3 \mathfrak{Z}_2 C_{H,R+2} L^{m-1},
$$
for some constants $c_1, C_2, C_3 > 0$ not depending on $L$ nor $R$. Then, cancelling terms we arrive at
\begin{equation*}
L^{2s + 1 - m - \tilde \theta} \leq \bar C M_u^{2s - 1 - \tilde \theta} \mathfrak{Z}_2 C_{H,R+2},
\end{equation*}
from which we arrive at the result by the choice of $\widehat{C}$ large enough compare to $\bar C$ in the definition of $L$ { and letting $\tilde\theta$ small enough by fixing $\theta$ small}. The proof is now complete.
\qed

\begin{rem}
{ Theorem~\ref{Thm-Lip} provides an upper bound on the gradient of $u$ in terms of $f$, ${\rm osc}(u)$ and $\mathscr{H}$. Typically, for Hamilton-Jacobi equations with
superlinear nonlinearity in the gradient, one uses the classical Bernstein method, introduced by Bernstein in \cite{Bern1,Bern2}, to estimate the gradient term. The main idea of this
technique is find an elliptic equation solved by $z(x):=\eta(x)|\grad u(x)|^{2}$, $\eta$ is a suitable cut-off function, and then obtain the gradient estimate applying maximum principle. Hence such
gradient estimates are also known as Bernstein estimate in literature. It is also worth pointing out that Bernstein method often produces {\it optimal} bound on the gradient.
We may see \eqref{estBernstein} as a Bernstein type estimate though our method of proof is very different from the Bernstein technique}. It is quite 
possible that the estimate may be
far from being optimal. On the other hand,
our estimate does not require $f$ to be Lipschitz continuous,
which is commonly used in this type of estimates \cite{CDV22,Ichi2011}. 
\end{rem}

As a consequence of Theorem~\ref{Thm-Lip} we obtain the following
regularity result.
\begin{thm}\label{T2.2}
Assume the hypotheses of Theorem~\ref{Thm-Lip}. Then for every 
$\bar{\eta}\in (0, 2s-1)$, the solution $u$ of \eqref{Lip-01} is in $C^{1, \bar\eta}_{loc}$.
In addition, let us assume that the kernel $K$ defining $I$ is symmetric and for each $R > 0$ and $\kappa > 0$, there exists $C_{R, \kappa} > 0$ such that 
\begin{equation}\label{KLip}
|K(z - x_1) - K(z - x_2)| \leq C_{R, \kappa} |z|^{-(d + 2s + 1)}|x_1 - x_2|, 
\end{equation}
for each $x_1, x_2 \in B_R$, and $z \in \rd$ such that $|z-x_1|, |z - x_2| \geq \kappa$.
Let $D_1 \subset \rd$ be a nonempty open set and assume $u \in C(\rd) \cap L^1(\omega_s)$ be a viscosity solution to
$$
\alpha u - I u + H(x,\grad u) - f(x) = 0 \quad \mbox{in} \ D_1,
$$
with $f$ locally H\"older continuous in $D_1$.
Then, $u\in C^{2s+\eta}_{loc}(D_1)$ for some $\eta>0$.
\end{thm}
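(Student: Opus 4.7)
The strategy is a two-step bootstrap that turns the quasilinear problem into a linear one with a successively more regular right-hand side.

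For the first assertion, I would start by applying Theorem~\ref{Thm-Lip} on concentric balls to conclude that $u$ is locally Lipschitz inside $B_{R+2}$, so that $\grad u \in L^\infty_{loc}(B_{R+2})$. The function
\[
g(x):=f(x)-H(x,\grad u(x))
\]
is then locally bounded and continuous, and $u$ is a viscosity solution of the linear nonlocal equation $\alpha u - Iu = g$ in $B_{R+2}$. Since $I$ is uniformly $2s$-elliptic by~\eqref{ellipticity}, interior $C^\sigma$ estimates for viscosity solutions of fractional equations of order $2s$ with merely bounded source (the nonsymmetric-kernel extension of the Caffarelli--Silvestre scheme, already used in~\cite{BCCI12,BKLT15}) yield $u\in C^\sigma_{loc}(B_{R+2})$ for every $\sigma<2s$. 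Because $s>\tfrac12$, writing $\sigma=1+\bar\eta$ with $\bar\eta<2s-1$ gives precisely the claimed local $C^{1,\bar\eta}$ regularity.

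For the second assertion, the plan is to iterate once more using the higher regularity now available on the source. By the first part applied locally in $D_1$, $\grad u$ is locally H\"older continuous of some exponent $\bar\eta>0$. Since by \hyperlink{H1}{(H1)} $H$ is Lipschitz in both variables on bounded sets, and $\grad u$ is locally bounded, the composition $x\mapsto H(x,\grad u(x))$ is locally H\"older continuous; combined with the assumed local H\"older regularity of $f$, the source $g=f-H(\cdot,\grad u)$ is locally H\"older in $D_1$. I would then invoke Schauder-type estimates for the linear equation $\alpha u - Iu = g$, now exploiting crucially the symmetry of $K$ and the Lipschitz-in-translation bound~\eqref{KLip}. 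The latter allows one to freeze the kernel at a point $x_0\in D_1$, compare $I$ locally to a translation-invariant model operator with error bounded by $|x-x_0|$, and run a perturbation/iteration argument in the spirit of the Schauder theory for nonlocal equations. This upgrade gains $2s$ derivatives on top of a H\"older source, producing $u\in C^{2s+\eta}_{loc}(D_1)$ for some $\eta>0$.

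The main obstacle is the Schauder-type estimate in the second step. Since $I$ is not translation invariant in general, the constant-coefficient theory does not apply directly, and one needs the perturbation scheme indicated above. Condition~\eqref{KLip} is exactly the minimal regularity of $K$ under translations ensuring that the frozen-coefficient error is absorbable in the iteration, while symmetry of $K$ handles the principal-value interpretation of $I$ on $C^{1,\bar\eta}$ functions without correction terms. Once this Schauder upgrade is in place, the conclusion $u\in C^{2s+\eta}_{loc}(D_1)$ is immediate from the H\"older regularity of $g$ obtained in the previous paragraph.
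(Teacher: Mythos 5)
Your first step coincides with the paper's: local Lipschitz regularity from Theorem~\ref{Thm-Lip} makes the zeroth- and first-order terms bounded, so $u$ solves a uniformly elliptic order-$2s$ equation with bounded right-hand side, and the interior estimates (the paper cites Serra's scaling argument together with the H\"older estimates of Schwab--Silvestre) give $C^{1,\bar\eta}_{loc}$ for every $\bar\eta<2s-1$. A small imprecision: at this stage $\grad u$ is only in $L^\infty_{loc}$, so $g=f-H(\cdot,\grad u)$ is bounded but not necessarily continuous; this does not matter for the argument, but you should phrase the equation as the pair of viscosity inequalities $|\alpha u - Iu|\le C$ rather than as an equation with continuous source.

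In the second step there is a genuine gap, rooted in a misreading of where the hypotheses enter. The operator $I$ in~\eqref{gen-I} has a kernel $K(y)$ that does \emph{not} depend on $x$: it is already translation invariant, so there is nothing to freeze and no perturbation scheme comparing $I$ to a ``model operator'' is needed. Condition~\eqref{KLip} is instead a smoothness assumption on $K$ as a function of its own argument (Lipschitz away from the origin with the natural scaling), and it is used at a step your proposal omits entirely: the localization. Since $u$ is only in $L^1(\omega_s)$ globally, before invoking any Schauder-type estimate one must cut off, writing $u = \chi u + (1-\chi)u$ for a cutoff $\chi$, which produces the tail term $h(x)=\int u(x+y)(1-\chi(x+y))K(y)\,dy$ on the right-hand side. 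The paper shows $h$ is Lipschitz precisely by changing variables to $\int u(z)(1-\chi(z))K(z-x)\,dz$ and using~\eqref{KLip}; without this, the localized source is not known to be H\"older and the Schauder estimate cannot be applied. (Symmetry of $K$ is needed because the $C^{2s+\eta}$ interior estimate being invoked, \cite[Theorem~1.3]{S2015}, requires it.) The paper also inserts an intermediate step you skip: it first upgrades to $\grad u$ locally H\"older via \cite[Theorem~4.1]{K2013} applied to the localized equation with bounded source, and only then concludes that $H(\cdot,\grad u)$ is H\"older so that Serra's $C^{2s+\eta}$ estimate applies. Your bootstrap outline is the right shape, but as written the justification of the key step rests on a mechanism (coefficient freezing) that is not the one at work here, and the tail-control step where~\eqref{KLip} actually matters is missing.
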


\begin{proof}
Since $u$ is locally Lipschitz by Theorem~\ref{Thm-Lip}, given any
bounded domain $D$ it is easily seen that
$$-C \leq \int_{\rd} (u(x+y)-u(x)-y\cdot \grad u(x)) K(y)dy \leq C
\quad \text{in}\; D,$$
for some constant $C$. We also note that the kernel $K$ satisfies the conditions of \cite[(A1)-(A4)]{SS2016}. Now the
proof of $C^{1, \bar\eta}_{loc}$ follows from a scaling
argument introduced by Serra in \cite[Theorem~2.2]{Serra-15a} in combination
to the H\"{o}lder estimate from \cite[Theorem~7.2]{SS2016}. See also 
\cite[Theorem~5.2]{BK22} for a similar argument.

Next we prove the $C^{2s+\eta}$ regularity.
Let $D_2, D_3$ be smooth bounded domains such that
$D\Subset D_2\Subset D_3\Subset D_1$.  From Theorem~\ref{Thm-Lip}
we see that $u$ is Lipschitz in $D_3$.
Consider a smooth cut-off function
$\chi:\rd\to [0, 1]$ satisfying $\chi=1$ in $D_2$ and 
$\chi=0$ in $D^c_3$. Define
$$
h(x)=\int_{\rd}u(x+y)(1-\chi(x+y))K(y) dy.
$$

Thus, letting $v=u\chi$, we can write the above equation as
\begin{equation}\label{ET2.2A}
-I u=F(x):= f(x) - H(x, \grad u) - h(x)-\alpha u\quad \text{in}\; D_2.
\end{equation}

Since, $u$ is Lipschitz in $D_2$, we have 
$$\sup_{x\in D_2}|F(x)|<\infty.$$
Applying \cite[Theorem~4.1]{K2013} we then see that $\grad u$ is locally H\"{o}lder continuous in $D_2$. Now consider a smooth domain
$D^\prime$ so that $D\Subset D^\prime\Subset D_2$. It then follows that
$f, H(x, \grad u(x))$ and $u$ are H\"{o}lder continuous in $D^\prime$.
By the property of $\chi$, we can find $\kappa>0$ so that
$$h(x)= \int_{|y|\geq \kappa}u(x+y)(1-\chi(x+y))K(y) dy.$$

We claim that
$$x\mapsto \int_{|y|\geq \kappa} u(x+y) K(y)dy$$
is Lipschitz in $D^\prime$. To see this,
consider $x_1, x_2\in D^\prime$ with $|x_1-x_2|\leq\delta$. Then,
for $\delta\leq\kappa/4$, in view of~\eqref{KLip},  we can write
\begin{align*}
&\left|\int_{B^c_\kappa} u(x_1+y) K(y)dy-\int_{B^c_\kappa} u(x_2+y) K(y)dy \right|
\\
&= \left|\int_{B^c_\kappa(x_1)} u(z) K(z - x_1)dz-\int_{B^c_\kappa(x_2)} u(z) K(z - x_2)dz \right|
\\
&\leq  \int_{(B_\kappa(x_1) \cup B_\kappa(x_2))^c} |u(z)| |K(z-x_1)- K(z-x_2)| \dz 
\\
&\qquad +\int_{B_\kappa(x_2) \setminus B_\kappa(x_1)} |u(z)| K(z - x_1)dz + \int_{B_\kappa(x_1) \setminus B_\kappa(x_2)} |u(z)| K(z-x_2)dz
\\
&\leq C |x_1-x_2| \int_{|z|\geq \kappa/2} |u(z)| |z|^{d+2s + 1}dz + C\max_{D^\prime} |u| \kappa^{-(d + 2s)} |x_1-x_2|,
\end{align*}
for some constant $C > 0$ depending on $\Lambda, d, s, D'$ and $\kappa$. 
This proves the claim on Lipschitz regularity. Since $\chi$ is smooth,
we have $h$ Lipschitz in $D^\prime$. Hence $F$ in~\eqref{ET2.2A} is H\"{o}lder
continuous in $D^\prime$.
Thus, applying the regularity result in \cite[Theorem~1.3]{S2015} we see that $u\in C^{2s+\eta}(D)$ for some $\eta>0$.
\end{proof}

%%%%%%%%%%%%%%%%%%%%%%%%%%%%%%%%%%%%

\section{The general set up. Proof of existence of the eigenpair.}
\label{secgeneral}

Here and in what follows, we adopt the notation
$$
\cL u := -I u  + H(x,\grad u),
$$
where $Iu$ is given by \eqref{gen-I}, that is,
\begin{equation*}
I u(x) = \int_{\rd} (u(x+y)-u(x)-\chi_{B}(y)\, y\cdot \grad u(x))K(y)dy.
\end{equation*}

We recall $f$ satisfies assumptions~\hyperlink{F1}{(F1)}--\hyperlink{F2}{(F2)}. We need to restrict a bit assumptions~\hyperlink{H1}{(H1)}-\hyperlink{H2}{(H2)} in an uniform fashion as follows
\begin{itemize}
\item[\hypertarget{H1'}{(H1')}] There exists $m > 1$ and $C > 0$ such that
\begin{equation*}
|H(x, p+q)-H(y, p)|\leq C \left[ |x-y| (1+|p|^m) + |q| (|p|^{m-1}+|q|^{m-1})\right]
\end{equation*}
for all $ x, y, p, q\in\rd$.

\item[\hypertarget{H2'}{(H2')}] There exists a constant 
$\upkappa$ such that
$$\frac{1}{\upkappa}|p|^m-\upkappa \leq H(x, p) \leq \upkappa(1+|p|^m)
\quad \text{for all}\; x, p\in\rd.$$
\end{itemize}

Note that \hyperlink{H1'}{(H1')} and \hyperlink{H2'}{(H2')} imply \hyperlink{H1}{(H1)} and \hyperlink{H2}{(H2)}, respectively, and therefore, the regularity results
developed in Section~\ref{secLip} also hold under the assumptions \hyperlink{H1'}{(H1')} -\hyperlink{H2'}{(H2')}.
%These assumptions are particular cases of the corresponding ones introduced in the previous section, from which the regularity results contained there hold. 
We also introduce the following condition
\begin{itemize}
\item[\hypertarget{H3}{(H3)}] There exist positive constants $C, b_m$ such that for all $\mu\in(0,1)$
\begin{equation}\label{EH2}
\mu H(x, \mu^{-1}p) - H(x, p)\geq (1-\mu) (b_m|p|^m-C)\quad \text{for all}\; x, p\in\rd.
\end{equation}
\end{itemize}

Since $f$ is coercive and $H(x, 0)$ is bounded in view of
\hyperlink{H2'}{(H2')}, we can add a suitable constant to
$f$ to assure that
\begin{equation}\label{E3.3}
H(x, 0)-f(x)\leq 0\quad \text{in}\; \rd.
\end{equation}

The structure of \eqref{E3.1} allows for this modification. 

The main result of this paper in its general form is the following
\begin{thm}\label{T1.2}
Let $K$ satisfies~\eqref{ellipticity}. 
Suppose $H$ satisfies \hyperlink{H1'}{(H1')}, \hyperlink{H2'}{(H2')} and~\hyperlink{H3}{(H3)}; and that $f$ satisfies
\hyperlink{F1}{(F1)}--\hyperlink{F2}{(F2)}.

Then, there exists  $(u, \lambda^*) \in C(\rd) \times \R$ solving the ergodic problem
\begin{equation*}%\label{ET1.2A}
\cL u= f-\lambda^*\quad \text{in}\; \rd.
\end{equation*}
The ergodic constant $\lambda^*$ meets the characterization formula
\begin{equation}\label{lam-crit}
\lambda^*=\inf\{\lambda\; : \; \exists \, u\in C_+(\rd)\cap L^1(\omega_s)\; 
\text{satisfying}\; \cL u\geq f-\lambda\; \text{in}\; \rd\}.
\end{equation}
and $u$ is unique up to an additive constant.
Provided $x\mapsto H(x, p)$ is periodic, we have
\begin{equation}\label{ET1.2B}
\lambda^*=\sup\{\lambda\; : \; \exists \, u\in C_{-}(\rd)\cap L^1(\omega_s)\; 
\text{satisfying}\; \cL u\leq f-\lambda\; \text{in}\; \rd\}.
\end{equation}
Moreover, if $f$ is locally H\"older continuous and the nonlocal kernel
$K$ is symmetric satisfying \eqref{KLip},
then $u \in C^{2s^+}(\rd)$. In particular, $u$ is a classical solution.
\end{thm}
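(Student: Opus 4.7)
The plan is to implement a vanishing discount scheme. For each $\alpha\in(0,1)$ I first produce $u_\alpha\in C(\rd)\cap L^1(\omega_s)$ solving the discounted problem
\begin{equation*}
\alpha u_\alpha+\cL u_\alpha=f\quad\text{in }\rd,
\end{equation*}
by Perron's method on a sequence of balls followed by a limit. The decisive ingredient is a global barrier built from~\hyperlink{F1}{(F1)}: since $\gamma<m(2s-1)$ I may fix $\beta\in(\gamma/m+1,\,2s)$ and set $\Phi(x)=C(1+|x|^2)^{\beta/2}$; because $\beta<2s$ one has $\Phi\in L^1(\omega_s)$ with $|I\Phi|\in\Linf(\rd)$, while the lower bound in~\hyperlink{H2'}{(H2')} yields $H(x,\grad\Phi)\gtrsim |x|^{m(\beta-1)}\geq |x|^\gamma$; for $C$ large this gives $\cL\Phi\geq f-C_0$ in $\rd$. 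Combining with a large negative constant as subsolution (using~\eqref{E3.3} and the upper bound in~\hyperlink{H2'}{(H2')}) produces $u_\alpha$ with $|u_\alpha|\leq\Phi+K/\alpha$ and $\alpha u_\alpha$ bounded on compacts uniformly in $\alpha$.

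Next, let $x_\alpha$ be a minimizer of $u_\alpha$, which exists by the barrier. At $x_\alpha$ one has $\grad u_\alpha(x_\alpha)=0$ and $-Iu_\alpha(x_\alpha)\leq 0$, so the equation gives $\alpha u_\alpha(x_\alpha)\geq f(x_\alpha)-\upkappa$; combined with the uniform upper bound on $\alpha u_\alpha$, the coercivity~\hyperlink{F2}{(F2)} pins $|x_\alpha|\leq R_0$ independently of~$\alpha$. Setting $v_\alpha:=u_\alpha-u_\alpha(x_\alpha)\geq 0$, the polynomial barrier transfers to uniform control of the tail quantities $\mathscr{H}(v_\alpha,B_R)$, so Theorem~\ref{Thm-Lip} applies and yields uniform local Lipschitz estimates. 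Along a subsequence $\alpha_n\to 0$ I obtain $v_{\alpha_n}\to u$ locally uniformly with $\alpha_n u_{\alpha_n}(x_{\alpha_n})\to\lambda^*$, and viscosity stability (together with dominated convergence on the nonlocal tails provided by the barrier) yields $\cL u=f-\lambda^*$ in $\rd$ with $u\geq 0$.

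For~\eqref{lam-crit} one inequality is immediate since $u$ is admissible. For the other, given $\bar u\geq 0$ with $\cL\bar u\geq f-\lambda$, the shifted function $\bar u+\lambda/\alpha$ is a supersolution of the $\alpha$-discounted equation (using $\bar u\geq 0$); by the standard comparison principle at $\alpha>0$, $u_\alpha\leq\bar u+\lambda/\alpha$ in $\rd$, so evaluating at $x_\alpha$ and sending $\alpha\to 0$ gives $\lambda^*\leq\lambda$. The dual formula~\eqref{ET1.2B} follows from a symmetric argument once $x\mapsto H(x,p)$ is periodic: given a nonpositive subsolution $v$ for parameter $\lambda$, suppose $\lambda>\lambda^*$ towards a contradiction; the translation invariance afforded by periodicity of $H$ together with the coercivity of $f$ forces the function $u-v+k$ to attain an interior minimum on a large ball for an appropriate $k$, and~\hyperlink{H3}{(H3)} (producing a strict gap when passing from $u$ to $\mu u$ with $\mu<1$) combined with the nonlocal strong maximum principle applied at this minimum yields the contradiction.

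For uniqueness up to a constant, let $u_1,u_2$ be two nonnegative solutions; by Theorem~\ref{Thm-Lip} they are locally Lipschitz, and a mean-value computation on $H$ shows that $w:=u_1-u_2$ satisfies the linear nonlocal equation $-Iw+B(x)\cdot\grad w=0$ in the viscosity sense, with $B\in L^\infty_\loc(\rd)$. The key technical step is to prove that every nonnegative solution is coercive at infinity: exploiting the lower bound of~\hyperlink{H2'}{(H2')} in $\cL u=f-\lambda^*$, the coercivity~\hyperlink{F2}{(F2)} and the $L^1(\omega_s)$ control of the nonlocal tail force $|\grad u|$ and hence $u$ itself to blow up along rays at a rate dictated by~\hyperlink{F1}{(F1)}. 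With both solutions coercive at comparable rates, $w$ is bounded and its infimum is attained in the interior, at which point the nonlocal strong maximum principle applied to the linear equation above forces $w\equiv$ const. The regularity conclusion $u\in C^{2s+}(\rd)$ under local H\"older continuity of $f$ and~\eqref{KLip} is then a direct application of Theorem~\ref{T2.2}. I expect the main obstacle to be this coercivity step: without the torus compactness available in the periodic setting, the growth of $u$ must be extracted directly from the equation, which requires a delicate balance of the (one-sided) coercivity of $H$ against the nonlocal tail contribution.
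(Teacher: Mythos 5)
Your overall strategy (vanishing discount) is the same as the paper's, and the barrier construction, the localization of the minimizer $x_\alpha$, and the derivation of \eqref{lam-crit} by comparing $u_\alpha$ with $\bar u+\lambda/\alpha$ are all sound. However, there is a genuine gap at the decisive compactness step. You assert that the barrier gives uniform control of $\mathscr{H}(v_\alpha,B_R)$ and that Theorem~\ref{Thm-Lip} then yields uniform local Lipschitz bounds. But the Lipschitz constant in Theorem~\ref{Thm-Lip} depends on $\mathfrak{Z}_1,\mathfrak{Z}_2$, which contain ${\rm osc}_{B_{R+2}}v_\alpha$ and $\mathscr{H}(v_\alpha,B_{R+2})$; the barrier only gives $0\le v_\alpha\le \upkappa_0/\alpha+V$, which degenerates as $\alpha\to0$, and knowing $v_\alpha(x_\alpha)=0$ with $x_\alpha\in B_{R_0}$ does not bound $v_\alpha$ on compacts (you would need the Lipschitz bound for that, which is circular). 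The paper closes this loop with a separate blow-up argument: assuming $\tau_\alpha:=\max_{B_R}|\bar w_\alpha|\to\infty$, it rescales by $\tau_\alpha$, proves equicontinuity of the rescaled functions by a doubling argument, and passes to a limit satisfying $|\grad v|\le 0$, hence constant, contradicting the normalization. Some such Liouville-type step is unavoidable and is missing from your proposal.

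Two further gaps. For \eqref{ET1.2B} you only argue that no nonpositive subsolution can exist for $\lambda>\lambda^*$ (the easy inequality $\sup\le\lambda^*$, which in the paper does not even use periodicity); you never \emph{construct} nonpositive subsolutions with eigenvalues approaching $\lambda^*$, which is the content of the equality and is where periodicity of $H$ is actually used --- the paper builds periodic truncations $F_k$ of $f$ with growing periods, solves the associated periodic ergodic problems, and shows their eigenvalues $\varrho_k$ increase to $\lambda^*$ while each periodic eigenfunction is a bounded-above subsolution. For uniqueness, the step ``both solutions coercive at comparable rates, hence $w=u_1-u_2$ is bounded and attains its infimum'' is unjustified: coercivity of $u_1$ and $u_2$ separately says nothing about boundedness of their difference (compare $|x|$ and $|x|^{1/2}$). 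The real content is the minimality of the vanishing-discount solution $u$ among supersolutions (Lemma~\ref{L5.1} in the paper), which is proved by tracking the maximum points of $\mu W_{n_k}-v$ along the discounted approximations and using \hyperlink{H3}{(H3)} together with the coercivity of $f$ (not of $u$) to force those points into a fixed ball. Once that minimality is in hand, your strong-maximum-principle endgame at the attained minimum is essentially the paper's Theorem~\ref{T5.2}; without it, the argument does not close.
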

{ Note that the Hamiltonian $H$ in Theorem~\ref{TeoIntro} does not depend on $x$ whereas Theorem~\ref{T1.2} includes a class of Hamiltonian depending on $x$. For 
the representation \eqref{ET1.2B} we require $H$ to be periodic in $x$, that is, for a period $T>0$ we must have $H(x, p)=H(x+Te_i, p)$ for all $x, p\in\rd$ where $e_i$ denotes that
$i$-th canonical unit basis vector in $\rd$ for $i=1, 2,\ldots, d$. The proof of \eqref{ET1.2B} is based on certain approximation argument based on periodic functions which uses 
the existence result from \cite{BKLT15}. For more detail see Theorem~\ref{T4.2} below.}

In the next subsection we provide the existence part of Theorem~\ref{T1.2}.

\subsection{Existence for the ergodic problem.}
We firstly provide an existence result of an eigenpair
$(u, \tilde\lambda) \in C(\rd)\cap L^1(\omega_s) \times \R$ satisfying
\begin{equation}\label{E3.1}
\cL u = f-\tilde\lambda \quad\text{in}\; \rd,
\end{equation}
and $u\geq 0$.

\medskip

The proof of existence of an eigenpair $(u, \lambda)$ is obtained in two main steps, that we can summarize as follows:
\begin{itemize}
\item[$(i)$] For $\alpha > 0$, solve the discounted problem
\begin{equation}\label{E3.4}
\mathcal{L}u - f +\alpha u =0 \hspace{3mm} \text{in}\hspace{2mm}
\mathbb{R}^d.
\end{equation}

This is the content of Proposition~\ref{T3.2}. We denote $w_\alpha$ the solution for this problem. 

\medskip

\item[$(ii)$] We show $\bar{w}_\alpha=w_\alpha-w_\alpha(0)$ is precompact in uniform norm on compact sets as $\alpha \to 0$, which leads us to the existence for the ergodic problem~\eqref{E3.1}. This is the content of Theorem~\ref{T3.5} below and covers the existence part in Theorem~\ref{T1.2}.
%\item[(iii)] The characterization of the ergodic constant in~\eqref{lam-crit} follows by Proposition~\ref{T4.1}. 
%Finally, the characterization formula~\eqref{ET1.2B} is given in Theorem~\ref{T4.2}.
\end{itemize}

As we shall see below, in both the steps, passage of limits inside the integration
are justified using the barrier function $V$ constructed in the next lemma.
\begin{lem}\label{lemmaV}
Assume~\hyperlink{H2'}{(H2')} and \hyperlink{F1}{(F1)}. Let $\beta\in (1, 2s)$ be such that $m(\beta  - 1) > \gamma$, and consider a smooth, nonnegative function $V$ such that $V(x) = |x|^\beta$ for $|x| \geq 1$. Then, there exist $\upkappa_0, \upkappa_1, R_0 > 0$ such that
\begin{equation}\label{Supsol}
\mathcal L V(x) - f(x) \geq -\upkappa_0 \chi_{B_{R_0}} + \upkappa_1 |x|^{m(\beta - 1)} \quad \mbox{in} \ \rd.
\end{equation}
\end{lem}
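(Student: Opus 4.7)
\textbf{Plan for the proof of Lemma~\ref{lemmaV}.} The idea is to write $\cL V(x) - f(x) = -IV(x) + H(x,\grad V(x)) - f(x)$ and to show that outside a large ball the coercive gradient contribution from $H$ absorbs both $|IV(x)|$ and $f(x)$, while inside that ball $\cL V - f$ is bounded by smoothness of $V$ and the continuity of $f$ on compact sets. The structural hypothesis doing the work is $\beta < 2s$ together with $s > 1/2$, which forces the nonlocal term $IV$ to be of strictly lower order than the gradient term $|\grad V|^m$.

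For $|x| \geq 1$, since $V(x) = |x|^\beta$, we have $\grad V(x) = \beta|x|^{\beta - 2}x$ and hence $|\grad V(x)| = \beta|x|^{\beta - 1}$. Using \hyperlink{H2'}{(H2')} and \hyperlink{F1}{(F1)},
\[
H(x,\grad V(x)) - f(x) \;\geq\; \upkappa^{-1}\beta^m |x|^{m(\beta - 1)} - \upkappa - C(1 + |x|^\gamma), \qquad |x| \geq 1.
\]
Because $m > 1$ and $\beta > 1$ we have $m(\beta - 1) > \beta - 1$, while the standing assumption $m(\beta - 1) > \gamma$ guarantees that the first term on the right dominates the second for $|x|$ large.

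The main step is to control $IV(x)$ as $|x| \to \infty$. For $|x| \geq 2$ I would split the domain of integration into the three zones $\{|y| \leq 1\}$, $\{1 < |y| \leq |x|/2\}$, and $\{|y| > |x|/2\}$. On the innermost zone the drift correction $\chi_B(y)\, y \cdot \grad V(x)$ enables a second-order Taylor expansion; all points $x + ty$ lie in $\{|\xi| \geq |x|/2\}$, so $|D^2 V(x + ty)| \leq C|x|^{\beta - 2}$, and $K(y) \leq \Lambda|y|^{-d - 2s}$ yields a contribution bounded by $C|x|^{\beta - 2}$. On the middle zone a first-order Taylor expansion (the correction is absent since $\chi_B$ vanishes) gives $|V(x+y) - V(x)| \leq C|y|\,|x|^{\beta - 1}$, and the elliptic bound on $K$ together with $\int_1^\infty r^{-2s}\,dr < \infty$ (which requires $s > 1/2$) produces a contribution bounded by $C|x|^{\beta - 1}$. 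Finally on the tail zone the crude bound $|V(x+y) - V(x)| \leq C(|x|^\beta + |y|^\beta + 1)$ integrated against $K$ gives $O(|x|^{\beta - 2s})$. Adding the three contributions and using $\beta < 2s < 2$, the dominant term is $|x|^{\beta - 1}$, so $|IV(x)| \leq C|x|^{\beta - 1}$ for $|x| \geq 2$.

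Combining these estimates, for $|x| \geq 2$,
\[
\cL V(x) - f(x) \;\geq\; \upkappa^{-1}\beta^m|x|^{m(\beta - 1)} - C|x|^{\beta - 1} - C(1 + |x|^\gamma) - C,
\]
and since $m(\beta - 1) > \max\{\beta - 1,\gamma\}$ there exist $R_0 \geq 2$ and $\upkappa_1 > 0$ (e.g.\ $\upkappa_1 = \beta^m/(2\upkappa)$) such that $\cL V(x) - f(x) \geq \upkappa_1|x|^{m(\beta - 1)}$ whenever $|x| \geq R_0$. On $\bar B_{R_0}$ the quantity $\cL V - f - \upkappa_1|\cdot|^{m(\beta - 1)}$ is continuous and therefore bounded below by some $-\upkappa_0$, which produces the indicator term $-\upkappa_0\chi_{B_{R_0}}$ in \eqref{Supsol}. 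The step I expect to be most delicate is the middle-shell estimate: the gradient of $V$ grows with $|x|$, and the hypothesis $s > 1/2$ is essential to keep that shell of size $O(|x|^{\beta - 1})$ rather than something that would overwhelm the coercive term; the lack of symmetry of $K$ causes no trouble because the correction $\chi_B(y)\,y\cdot\grad V(x)$ is built into the definition of $IV$ and is only used on the innermost shell.
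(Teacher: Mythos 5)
Your proof is correct and follows essentially the same route as the paper: both establish the key bound $|IV(x)| \leq C(1+|x|^{\beta-1})$ by splitting the integration domain (inner region via second-order Taylor, intermediate shell via a first-order bound using $2s>1$, tail via the crude growth of $V$), and then let the coercive term $\upkappa^{-1}\beta^m|x|^{m(\beta-1)}$ from \hyperlink{H2'}{(H2')} absorb $|IV|$ and $f$ at infinity, with compactness handling $B_{R_0}$. The only cosmetic difference is that the paper rescales $y\mapsto |x|y$ in the outer integral and treats the discrepancy $V-|\cdot|^\beta$ separately, whereas you estimate the shells directly; the conclusion and the role of each hypothesis are identical.
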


\begin{proof}
We first claim that 
\begin{equation}\label{V-1}
|IV(x)|\leq C(1+|x|^{\beta-1})\quad \text{in}\; \rd.
\end{equation}
By the smoothness of $V$, we have the existence of $C_1 > 0$ such that $|I V(x)| \leq C_1$ for all $|x|\leq 4$. So we consider $|x|> 4$ and note that
\begin{align*}
|IV(x)|& \leq \Lambda \int_{|y|\leq 2} |V(x+y)-V(x)-\chi_B y\cdot \grad V(x)|
|y|^{-d-2s} dy 
\\
&\quad + \Lambda \int_{|y|> 2} |V(x+y)-|x|^\beta-\beta\chi_B y\cdot x|x|^{\beta-2}|
|y|^{-d-2s} dy
\\
&\leq C_1 + \Lambda 
\underbrace{\int_{|y|> 2} ||x+y|^{\beta}-|x|^\beta-\beta\chi_B y\cdot x|x|^{\beta-2}| |y|^{-d-2s} dy}_{=J_1}
\\
&\quad + \underbrace{\Lambda \int_{|y|> 2} |V(x+y)-|x+y|^\beta||y|^{-d-2s} dy}_{=J_2}.
\end{align*}
$J_1$ can be computed as follows (recall that $\hat{x}=x/|x|$)
\begin{align*}
J_1&=|x|^{\beta-2s}\int_{|y|>\frac{2}{|x|}} 
||\hat{x}+y|^{\beta}-1-\beta\chi_{B_{\frac{1}{|x|}}} y\cdot \hat{x}| |y|^{-d-2s} dy
\\
&\leq |x|^{\beta-2s}\left[\int_{\frac{2}{|x|}\leq |y|\leq 1} 
||\hat{x}+y|^{\beta}-1-\beta\chi_{B_{\frac{1}{|x|}}} y\cdot \hat{x}| |y|^{-d-2s} dy + C_2\right]
\\
&\leq |x|^{\beta-2s} \left[C_3 + \beta \int_{\frac{2}{|x|}\leq |y|\leq 1} 
|y|^{-d-2s+1} dy + C_2\right]\leq C_4 (1+ |x|^{\beta-1}), 
\end{align*}
for some appropriate constants $C_2, C_3, C_4$. To compute $J_2$ we note that
for $|y|\leq \frac{|x|}{2}$, we have $|x+y|\geq |x|-\frac{|x|}{2}=\frac{|x|}{2}>1$. Hence, for $|y|\leq \frac{|x|}{2}$, we have $V(x+y)=|x+y|^\beta$.
Therefore
\begin{align*}
J_2&=\int_{|y|> \frac{|x|}{2}} |V(x+y)-|x+y|^\beta||y|^{-d-2s}
\\
& \leq C_5 \int_{|y|> \frac{|x|}{2}} (1+ |y|^\beta) |y|^{-d-2s} dy
\leq C_6 |x|^{\beta-2s},
\end{align*}
for some constants $C_5, C_6$. Combining these estimates we obtain \eqref{V-1}.
Using \eqref{V-1} we can write
\begin{align*}
\mathcal{L}V(x)- f(x) &\geq -C(1+ |x|^{\beta-1})+ H(x, \grad V(x))
- C(1+|x|^\gamma)\nonumber
\\
&\geq -C(1+ |x|^{\beta-1})+ C(|x|^{m(\beta-1)}-1)
- C(1+|x|^\gamma)\nonumber
\\
&\geq -\upkappa_0 \chi_{B_{R_0}} +\upkappa_1 |x|^{m(\beta-1)}
\end{align*}
where $R_0 > 1$ is large enough, and 
$\upkappa_0, \upkappa_1$ are positive constants.
\end{proof}

\subsubsection{Existence for the $\alpha$-discounted problem.}
Let $\alpha \geq 0$. For each $n \in \mathbb N$, consider the Dirichlet problem
\begin{equation}\label{EBD-1}
\left \{ 
\begin{array}{rll}
\cL W_n - f + \alpha W_n=&0 \quad \text{in}\hspace{2mm} B_n, 
\\
W_n=&0 \quad \text{in}\hspace{2mm} B^c_n,
\end{array}
\right .
\end{equation}
where $B_n$ is the open ball of radius $n$, centered at the origin. 

Problem~\eqref{EBD-1} has a unique viscosity solution $W_n \in C(\rd)$, which can be obtained by Perron's method. It is easy to see that the function identically equal to zero is a viscosity subsolution for the Dirichlet problem, in view of~\eqref{E3.3}. On the other hand, it is direct to construct a supersolution to the problem with the form
$$
\varphi(x) = \min \{ C_1 (n - |x|)_+^\beta, C_2 \},
$$
where $\beta \in (0,s)$ and $C_1, C_2 > 0$ are suitable constants. In fact, by Lemma 3.1 in~\cite{DQT-17} or Proposition 4.8 in~\cite{DRSV22}, there exists $c_0 > 0$ such that $-I \varphi(x) \geq c_0(n - |x|)^{\beta- 2s}$ for all $x$ close to the boundary, provided $\beta$ is small enough (depending on the ellipticity constants). Thus, we fix $C_1, C_2$ large in terms of $n$ in order $\varphi$ is a (continuous) viscosity supersolution to the problem attaining the boundary condition.

Thus, the existence of a continuous solution is completed by the following comparison  principle:%\ftn{AB: The following result does not include boundary data in generalized sense. We also do not need it.}

\begin{lem}[Comparison]
\label{comparison}
Let $\alpha > 0$, $f \in C(\rd)$ and 
assume H satisfies \hyperlink{H1'}{(H1')}, \hyperlink{H2'}{(H2')} and \hyperlink{H3}{(H3)}.
Let $D$ be a bounded domain with a smooth boundary, and let $\alpha \geq 0$. Let $u\in USC(\bar{D}) \cap C(D^c) \cap L^1(\omega_s)$ be a viscosity subsolution to
\begin{align*}
\cL u - f + \alpha u= 0  \hspace{2mm} \text{in} \hspace{2mm} D,
\end{align*}
and $v\in LSC(\bar{D})\cap C(D^c) \cap L^1(\omega_s)$ be a viscosity supersolution to 
\begin{align*}
\cL v - f + \alpha v= 0  \hspace{2mm} \text{in} \hspace{2mm} D.
\end{align*}

If $v\geq u$ in $D^c$, then we have $v\geq u$ in $\mathbb{R}^d$. 
\end{lem}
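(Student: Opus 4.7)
The plan is to argue by contradiction, combining the standard nonlocal doubling-of-variables technique with the partial convexity hypothesis \hyperlink{H3}{(H3)}, which is used to generate a strict subsolution whose $|p|^m$-coercive correction absorbs the superlinear Hamiltonian difference that arises in the viscosity testing. Suppose $M := \sup_{\bar D}(u - v) > 0$; semicontinuity and compactness of $\bar D$ ensure $M$ is attained, and the boundary ordering $u \leq v$ on $D^c$ forces the maximum point $\hat x$ to lie in the open set $D$. For $\mu \in (0,1)$ close to $1$, set $u_\mu := \mu u - A$ with $A = A(\mu,\alpha,\eta) > 0$. Using \hyperlink{H3}{(H3)} together with the linearity of $I$ and the identity $\nabla(\mu \phi) = \mu \nabla \phi$ for test functions $\phi$ touching $u$ from above, one verifies in the viscosity sense on $D$ that
\begin{equation*}
\cL u_\mu + \alpha u_\mu + \eta + (1-\mu) b_m |\nabla u_\mu|^m \leq f,
\end{equation*}
provided $\alpha A \geq \eta + (1-\mu)(\|f\|_{L^\infty(D)} + C)$, with $C$ the constant from \hyperlink{H3}{(H3)}. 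For $\mu$ sufficiently close to $1$, the ordering $u_\mu \leq v$ on $D^c$ is preserved (using $u \leq v$ there and the $L^1(\omega_s)$-control at infinity) and $\sup_{\bar D}(u_\mu - v) > 0$ is still attained at an interior point.

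\medskip
Next, consider the penalization $\Phi_\epsilon(x,y) := u_\mu(x) - v(y) - \epsilon^{-1}|x-y|^2$ on $\bar D \times \bar D$, and let $(x_\epsilon, y_\epsilon)$ be a maximizer. Standard arguments yield $(x_\epsilon, y_\epsilon) \to (\hat x, \hat x)$, $\epsilon^{-1}|x_\epsilon - y_\epsilon|^2 \to 0$, and interior location for small $\epsilon$. Applying the nonlocal Crandall--Ishii--Jensen lemma (Barles--Imbert) at $(x_\epsilon, y_\epsilon)$ yields viscosity inequalities with shared test gradient $p_\epsilon := 2(x_\epsilon - y_\epsilon)/\epsilon$, in which the nonlocal operator is evaluated via the splitting $I = I[B_\delta] + I[B_\delta^c]$ for a free parameter $\delta \in (0,\delta_0)$. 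Direct computation bounds the small-ball discrepancy by $O(\delta^{2-2s}/\epsilon)$, which is rendered negligible by coupling $\delta = \delta(\epsilon) \to 0$ so that $\delta^{2-2s}/\epsilon \to 0$; the exterior contribution $I[B_\delta^c] u_\mu(x_\epsilon) - I[B_\delta^c] v(y_\epsilon)$ is nonpositive on the diagonal-translation region where $(x_\epsilon + z, y_\epsilon + z) \in \bar D^2$ (by invariance of the penalty $\Psi(x,y)=|x-y|^2$ under $(x,y) \mapsto (x+z,y+z)$), while the complementary set is controlled using $u \leq v$ in $D^c$ and the $L^1(\omega_s)$-tail bound.

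\medskip
Subtracting the viscosity inequalities and using the strict-subsolution gain gives, after first letting $\delta \to 0$,
\begin{equation*}
\alpha (u_\mu(x_\epsilon) - v(y_\epsilon)) + \eta + (1-\mu) b_m |p_\epsilon|^m \leq H(y_\epsilon, p_\epsilon) - H(x_\epsilon, p_\epsilon) + o_\epsilon(1).
\end{equation*}
By \hyperlink{H1'}{(H1')}, $H(y_\epsilon,p_\epsilon) - H(x_\epsilon,p_\epsilon) \leq C|x_\epsilon - y_\epsilon|(1+|p_\epsilon|^m)$; for $\epsilon$ small, $C|x_\epsilon - y_\epsilon| < (1-\mu) b_m$, so the coercive correction absorbs the $|p_\epsilon|^m$ contribution, and the limit $\epsilon \to 0$ leaves $\alpha(u_\mu(\hat x) - v(\hat x)) + \eta \leq 0$, contradicting $u_\mu(\hat x) > v(\hat x)$ together with $\eta, \alpha > 0$. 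Sending $\mu \to 1$ yields $u \leq v$ on $\bar D$. The main obstacle is the superlinear growth of $H$ in $p$: under a naive quadratic penalization the term $C|x_\epsilon - y_\epsilon||p_\epsilon|^m$ is not $o(1)$ as $\epsilon \to 0$, and the coercive correction produced by \hyperlink{H3}{(H3)} is what closes the argument. A secondary technical point is the synchronization of the auxiliary small-ball parameter $\delta$ with the penalization scale $\epsilon$ in the nonlocal Jensen--Ishii step, which follows the standard Barles--Imbert framework for integro-differential viscosity equations.
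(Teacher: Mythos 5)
Your overall architecture is sound and close in spirit to the paper's: both proofs hinge on the scaling trick afforded by (H3), which converts the superlinear Hamiltonian discrepancy into a coercive correction $(1-\mu)b_m|p|^m$ capable of absorbing the $C|x_\epsilon-y_\epsilon|(1+|p_\epsilon|^m)$ error coming from (H1'). The paper packages this differently: it invokes \cite[Lemma~3.2]{BKLT15} to assert directly that $w_\varepsilon=\varepsilon u - v$ is a viscosity subsolution of an eikonal-type inequality with only a \emph{linear} gradient term, and then tests $w_\varepsilon$ with a constant at its global maximum (which annihilates both the gradient and the nonlocal contributions), letting $\varepsilon\to 1$ to reach $\alpha M\leq \alpha M/2$. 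Your version carries out the doubling of variables explicitly instead of citing that lemma; that is a legitimate and more self-contained route, and your bookkeeping of the order of limits (fix $\mu$, then $\epsilon\to 0$, then $\mu\to 1$) is correct.

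There is, however, one concrete gap. The hypotheses only give $u, v\in C(D^c)\cap L^1(\omega_s)$; these functions need not be bounded on $D^c$, and membership in $L^1(\omega_s)$ provides no pointwise control at infinity. Consequently the claim that ``the ordering $u_\mu\leq v$ on $D^c$ is preserved'' fails in general: on $D^c$ one only gets $u_\mu - v\leq -(1-\mu)u - A$, which is unbounded above whenever $\inf_{D^c}u=-\infty$. The same unboundedness undermines your control of the nonlocal tail on the set of $z$ for which $(x_\epsilon+z,y_\epsilon+z)$ leaves $\bar D\times\bar D$, where you would need a pointwise comparison between values of $u$ and $v$ at \emph{different} exterior points. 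The paper resolves this at the very start by replacing $u,v$ with the truncations $u\chi_{B_R}, v\chi_{B_R}$: for $R$ large, the $L^1(\omega_s)$ bound (used inside the nonlocal operator, where it genuinely helps) shows these are still sub/supersolutions up to an error $\alpha M/4$, which is harmless in the final contradiction. You need to insert such a truncation before running your argument (note that it consumes $\alpha>0$ a second time); with it, the remainder of your proof closes.
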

\begin{proof}
%When $m \leq 2s$, sub and supersolution attains the boundary condition pointwisely, and therefore comparison holds as in~\cite{BCI08}. 
By contradiction, suppose that $\sup_{\rd}(u-v)=\sup_D(u-v)>0$. From the semicontinuity of $u, v$ we can find $x_0\in D$ such that 
$$
M:= (u-v)(x_0)=\max_{\bar D}(u-v)=\sup_{\rd}(u-v)>0.
$$

Since $u, v \in L^1(\omega_s)$, for $R \geq \mathrm{diam}(D) + 1$ large enough in terms of $M$, $\alpha$ and the ellipticity constants, we have the function $\tilde u = u \chi_{B_R}$ satisfies
$$
\cL \tilde u - f +\alpha \tilde u \leq \alpha M/4 \quad \mbox{in} \ D,
$$
and $\tilde v = v \chi_{B_R}$ satisfies
$$
\cL \tilde v - f +\alpha \tilde v \geq -\alpha M/4 \quad \mbox{in} \ D,
$$
in the viscosity sense. Thus, without loss of generality we assume that $u$ and $v$ are bounded, and satisfy the above mentioned inequalities.

Then for $\varepsilon \in (0,1)$ close to $1$ we have
\begin{equation}\label{Com-1}
\sup_{D^c}(\varepsilon u-v) \leq (1-\varepsilon)\sup_{D^c}|u|
< (\varepsilon u-v)(x_0).
\end{equation}

Letting $w_\varepsilon=\varepsilon u-v$ it follows from \cite[Lemma~3.2]{BKLT15} that
\begin{equation*}%\label{Com-2}
-I w_\varepsilon- \tilde{C}\frac{(m-1)^{m-1}}{m^m}\frac{|\grad w_\varepsilon|}{[(1-\varepsilon)b_m]^{m-1}} + \alpha w_\varepsilon \leq C(1-\varepsilon) + (1 + \epsilon)\alpha M/4
\end{equation*}
in the viscosity sense,
where $C$ is the constant given by \eqref{EH2} and $\tilde{C}$ is a constant dependending on $H$ but not on $\varepsilon$. From \eqref{Com-1} we can find
$x_\varepsilon\in D$ so that 
$$
M_\epsilon := (\varepsilon u- v)(x_\varepsilon)= \sup_{\rd}(\varepsilon u-v)>0.
$$

Using the constant function equal to $M_\varepsilon$ as test function for $w_\varepsilon$ at $x_\varepsilon$, we arrive at
$$
\alpha w_\varepsilon(x_\varepsilon) \leq C(1 - \varepsilon) + (1 + \varepsilon)\alpha M/4.
$$
% Fix a ball $B_R$ large enough so that $y\in B^c_R\Rightarrow x+y\in D^c$ for 
% all $x\in D$.
% Letting $\varphi\equiv (\varepsilon u- v)(x_\varepsilon)$ as a test function
% touching $w_\varepsilon$ from above at the point $x_\varepsilon$ we obtain from \eqref{Com-2} that
% $$-I[B^c_R](w_\varepsilon, 0, x_\varepsilon)-I[B_R](\varphi, 0, x_\varepsilon) + \alpha w_\varepsilon(x_\varepsilon)\leq C(1-\varepsilon)\;
% \Rightarrow \; -I[B^c_R](w_\varepsilon, 0, x_\varepsilon)\leq C(1-\varepsilon).
% $$

Since
$$\lim_{\varepsilon\nearrow 1} w_\varepsilon(x_\varepsilon)=
\max_{\rd}(u-v)=(u-v)(x_0) = M,$$
letting $\varepsilon\to 1$ in the previous inequality we get
$$
\alpha M \leq \alpha M/2,
$$
which is a contradiction with the fact that $M > 0$. 
% $$0\leq I[B^c_R][u-v,0, x_0]\leq \int_{B^c_R}(-(u-v)(x_0))K(y) dy.$$
% But this is not possible as $(v-u)(x_0)<0$. Hence the proof.
\end{proof}

Now we can prove the existence result for the discounted problem~\eqref{E3.4}, which covers the step $(i)$ mentioned at the beginning of this section. A special attention to be given on the approximation procedure which we use to obtain the results. Similar argument is going to be useful in subsequent analysis.
\begin{prop}\label{T3.2}
Assume hypotheses of Theorem~\ref{T1.2} hold, and for each $n$ let $W_n$ be the unique viscosity solution  to \eqref{EBD-1}. Then, $\{ W_n \}_n$ converges (up to subsequences) locally uniformly in $\rd$ to a function $w_\alpha \in C(\rd)$ solving~\eqref{E3.4}.

% \begin{equation}\label{ET3.2A}
% \cL w_\alpha -f+\alpha w_\alpha=0 \hspace{3mm} \text{in}\hspace{2mm} \mathbb{R}^d.
% \end{equation}
Furthermore, $0\leq w_\alpha\leq \frac{\upkappa_0}{\alpha} + V$ in $\rd$.
\end{prop}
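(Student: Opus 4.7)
The plan is to proceed in three stages: obtain uniform two-sided bounds on $\{W_n\}$, upgrade these to uniform local Lipschitz estimates via Theorem~\ref{Thm-Lip}, and finally pass to the limit in the viscosity sense using standard stability for the nonlocal operator.

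\textbf{Step 1 (Uniform bounds).} The nonnegativity $W_n \geq 0$ is immediate from the comparison principle (Lemma~\ref{comparison}) applied on $B_n$: by~\eqref{E3.3}, the constant function $0$ is a viscosity subsolution of $\cL u - f + \alpha u = 0$, and it agrees with $W_n$ outside $B_n$. For the upper bound, set $\Psi := \frac{\upkappa_0}{\alpha} + V$, where $V$ and $\upkappa_0,\upkappa_1,R_0$ are as in Lemma~\ref{lemmaV}. A direct computation using~\eqref{Supsol} gives
\begin{equation*}
\cL \Psi - f + \alpha \Psi = \cL V - f + \upkappa_0 + \alpha V \;\geq\; -\upkappa_0 \chi_{B_{R_0}} + \upkappa_1 |x|^{m(\beta - 1)} + \upkappa_0 + \alpha V \;\geq\; 0 \quad \text{in}\ \rd,
\end{equation*}
so $\Psi$ is a smooth (classical) supersolution in $B_n$ satisfying $\Psi \geq 0 = W_n$ on $B_n^c$. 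Another application of Lemma~\ref{comparison} yields $W_n \leq \Psi$ in $\rd$ for every $n$. This already establishes the bound $0 \leq w_\alpha \leq \frac{\upkappa_0}{\alpha} + V$ in the limit, and in particular the uniform $L^1(\omega_s)$ control of $\{W_n\}$ since $\beta < 2s$.

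\textbf{Step 2 (Local equicontinuity).} Fix $R > 0$. For all $n \geq R + 3$, the function $W_n$ solves $\alpha W_n - IW_n + H(x,\grad W_n) = f$ in $B_{R+2}$, and satisfies $0 \leq W_n \leq \Psi$ globally. The latter bound provides a uniform control on $\mathrm{osc}_{B_{R+2}} W_n$ and on the tail quantity $\mathscr{H}(W_n, B_{R+2})$, both independent of $n$. Therefore Theorem~\ref{Thm-Lip} (whose constant depends only on the data on $B_{R+2}$ and these tail quantities) yields a constant $C_R > 0$, independent of $n$, such that $|W_n(x) - W_n(y)| \leq C_R |x-y|$ for all $x,y \in B_R$. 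Arzelà–Ascoli together with a standard diagonal extraction over an exhaustion $B_{R_k} \uparrow \rd$ gives a subsequence $W_{n_k} \to w_\alpha$ locally uniformly, with $w_\alpha \in C(\rd)$ inheriting the two-sided bound from Step~1.

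\textbf{Step 3 (Passage to the limit).} I would pass to the limit using stability of viscosity solutions for nonlocal equations. Given a smooth test function $\varphi$ touching $w_\alpha$ from above at $x_0 \in \rd$, standard perturbation arguments produce approximate test points $x_{n_k} \to x_0$ with $W_{n_k}(x_{n_k}) \to w_\alpha(x_0)$, for $n_k$ large enough so that $x_{n_k} \in B_{n_k}$. Evaluating the viscosity inequality satisfied by each $W_{n_k}$ at $x_{n_k}$ yields
\begin{equation*}
\alpha W_{n_k}(x_{n_k}) - I[B_\delta](\varphi, x_{n_k}) - I[B_\delta^c](W_{n_k}, \grad \varphi(x_{n_k}), x_{n_k}) + H(x_{n_k}, \grad \varphi(x_{n_k})) - f(x_{n_k}) \leq 0,
\end{equation*}
for any $\delta \in (0,1)$. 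The local term $I[B_\delta](\varphi,\cdot)$ passes to the limit by continuity; the nonlocal tail $I[B_\delta^c](W_{n_k}, \cdot)$ converges to $I[B_\delta^c](w_\alpha, \grad \varphi(x_0), x_0)$ by dominated convergence, using $W_{n_k} \to w_\alpha$ locally uniformly together with the uniform envelope $0 \leq W_{n_k} \leq \Psi$ as an integrable majorant against the kernel $|y|^{-(d+2s)}$. The same argument handles the supersolution inequality with touching from below, so $w_\alpha$ is a viscosity solution of~\eqref{E3.4}.

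\textbf{Main obstacle.} The only genuinely delicate point is ensuring that Theorem~\ref{Thm-Lip} applies with constants uniform in $n$: this requires a uniform bound on $\mathscr{H}(W_n, B_{R+2})$, which in turn hinges on having controlled the tail of $W_n$ by the barrier $\Psi$ from Step~1. Once this uniformity is in hand, the stability argument in Step~3 is routine, as the same barrier serves both as the dominating function for passing to the limit in the nonlocal term and as the explicit upper envelope claimed in the statement.
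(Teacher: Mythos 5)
Your proposal is correct and follows essentially the same route as the paper: the barrier $\tfrac{\upkappa_0}{\alpha}+V$ from Lemma~\ref{lemmaV} combined with the comparison principle gives the uniform two-sided bound, Theorem~\ref{Thm-Lip} then yields uniform local Lipschitz estimates (the paper likewise notes the uniform control of the nonlocal tail), and Arzel\`a--Ascoli plus stability of viscosity solutions concludes. The only difference is that you spell out the stability step in detail, whereas the paper invokes it after noting $L^1(\omega_s)$ convergence via dominated convergence.
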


\begin{proof}
Let $W_n$ be a solution to \eqref{EBD-1} and define $\tilde{V}(x):= \frac{\upkappa_0}{\alpha}+ V(x)$, where $\upkappa_0$ is given by \eqref{Supsol}.
Observe from \eqref{Supsol} that
\begin{align*}
\mathcal{L} \tilde{V} - f + \alpha \tilde V\geq 0. 
\end{align*}
Therefore, by the comparison principle Lemma~\ref{comparison}, we have $W_n\leq \tilde{V}$ and $W_n\geq 0$ in $\mathbb{R}^d$ for all 
$n\in \mathbb{N}$. Thus for any compact set $\mathscr{B}$,
${\rm osc}_{\mathscr{B}}W_n$ is uniformly bounded in $n$. Similarly, we also
have
$$
\sup_{n\geq n_0} \, \sup_{x\in \mathscr{B}} \int_{|y|\geq \frac{1}{4}} |W_n(x+y)-W_n(x)|\frac{1}{|y|^{d+2s}} \dy<\infty,
$$
where $n_0$ is chosen large enough so that $\mathscr{B}\Subset B_{n_0}$.
Thus, by Theorem~\ref{Thm-Lip}, the sequence $\{W_n\, :\, n\geq 1\}$
is uniformly locally Lipschitz.
Now we apply Arzel\'a-Ascoli and use a diagonalization process to get a subsequence $\{W_{n_k}\}$ converging to $w_\alpha\in C(\mathbb{R}^d)$ uniformly on every compact sets. As we already have $0\leq W_{n_k}\leq \tilde{V}$ in whole $\mathbb{R}^d$, it implies 
\begin{align*}
 0\leq w_\alpha\leq \tilde{V} \hspace{2mm} \text{in} \hspace{2mm} \mathbb{R}^d.
\end{align*}
Moreover, we can apply dominated convergence theorem and conclude
\begin{align*}
\norm{W_{n_k}-w_\alpha}_{L^1(\omega_s)} \xrightarrow{k\rightarrow \infty} 0.
\end{align*}
In the end we use stability of the viscosity solution
to \eqref{EBD-1} to obtain a solution to~\eqref{E3.4}. This completes the proof.
\end{proof}

\subsubsection{Existence for the ergodic problem}
In this section we show that $w_\alpha(\cdot)-w_\alpha(0)$ converges to a 
solution to the ergodic problem. To do so, we need to find a bounded of this
function. 
We begin with the following lemma.
\begin{lem}\label{L3.3}
Let \hyperlink{H2'}{(H2')} and \hyperlink{F2}{(F2)} hold.
Suppose that $\alpha\in [0, 1]$ and assume $w\in C(\rd)\cap L^1(\omega_s)$ with $\inf_{\rd} w>-\infty$ is a viscosity supersolution to~\eqref{E3.4}. Then $w$ is coercive, that is, 
$\lim_{|x|\to \infty} w(x)=\infty$.
\end{lem}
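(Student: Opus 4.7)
The plan is to argue by contradiction via a sliding-barrier test function built from the function $V$ of Lemma~\ref{lemmaV}, centered at a point where non-coercivity fails. First I would normalize: since $\mathcal{L}$ is invariant under adding constants, replacing $w$ by $w - \inf_{\rd} w$ and $f$ by $f - \alpha \inf_{\rd} w$ (still coercive) allows me to assume $w \geq 0$. Suppose, for contradiction, that there exists $M > 0$ such that the super-level set $\{w \leq M\}$ is unbounded. I fix $\beta \in (1, 2s)$ as in Lemma~\ref{lemmaV}, and take $V$ to be the corresponding smooth, nonnegative barrier with $V(z) = |z|^\beta$ for $|z| \geq 1$ and (with a harmless modification) $V(0) = 0$; the proof of Lemma~\ref{lemmaV} already gives $|IV(z)| \leq C(1 + |z|^{\beta - 1})$, and $|\nabla V(z)| \leq C(1+|z|^{\beta - 1})$ follows from the explicit form of $V$.

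For each small $\epsilon > 0$ I would pick $y_\epsilon \in \{w \leq M\}$ with $|y_\epsilon| \geq 2(M/\epsilon)^{1/\beta}$ (possible by unboundedness of the level set), and consider $\psi_\epsilon(x) := w(x) + \epsilon V(x - y_\epsilon)$. Since $w \geq 0$ and $V$ is coercive, $\psi_\epsilon$ attains a global minimum at some $z_\epsilon \in \rd$, and $\psi_\epsilon(z_\epsilon) \leq \psi_\epsilon(y_\epsilon) = w(y_\epsilon) \leq M$ forces $w(z_\epsilon) \leq M$ and $|z_\epsilon - y_\epsilon| \leq (M/\epsilon)^{1/\beta}$; combined with the choice of $y_\epsilon$ this gives $|z_\epsilon| \geq (M/\epsilon)^{1/\beta} \to \infty$ as $\epsilon \to 0$. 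Testing $w$ from below at $z_\epsilon$ with the smooth function $\phi(x) := w(z_\epsilon) + \epsilon V(z_\epsilon - y_\epsilon) - \epsilon V(x - y_\epsilon)$ (a global minorant of $w$, so the viscosity inequality for the nonlocal operator applies with $\phi$ throughout), and using translation invariance so that $-I\phi(z_\epsilon) = \epsilon IV(z_\epsilon - y_\epsilon)$ and $\nabla\phi(z_\epsilon) = -\epsilon \nabla V(z_\epsilon - y_\epsilon)$, I arrive at
$$
f(z_\epsilon) \leq \epsilon\, IV(z_\epsilon - y_\epsilon) + H\bigl(z_\epsilon, -\epsilon\nabla V(z_\epsilon - y_\epsilon)\bigr) + \alpha w(z_\epsilon).
$$
Applying the upper bound $H(x,p) \leq \upkappa(1 + |p|^m)$ from \hyperlink{H2'}{(H2')}, the bounds on $IV$ and $\nabla V$, and $|z_\epsilon - y_\epsilon|^{\beta} \leq M/\epsilon$, every $\epsilon$-dependent term on the right-hand side is $O(\epsilon^{1/\beta} + \epsilon^{m/\beta})$; thus $f(z_\epsilon) \leq \upkappa + \alpha M + o(1)$ as $\epsilon \to 0$, contradicting coercivity of $f$ at the sequence $|z_\epsilon| \to \infty$.

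The main obstacle is arranging the scaling so that the superlinear gradient contribution from $H$ does not blow up even though $|z_\epsilon|$ must be driven to infinity to exploit coercivity of $f$. Centering $V$ at the \emph{distant} point $y_\epsilon$ (not at the origin) is the key trick: the local displacement $|z_\epsilon - y_\epsilon|$ is controlled by $\epsilon^{-1/\beta}$, so $\epsilon^m |\nabla V(z_\epsilon - y_\epsilon)|^m \lesssim \epsilon^{m/\beta} \to 0$, while $|z_\epsilon|$ itself still tends to $\infty$ because $y_\epsilon$ is chosen far enough out. This is exactly why the restriction $\beta \in (1, 2s)$ in Lemma~\ref{lemmaV}, coupled with the upper bound on $H$, is strong enough to drive the argument.
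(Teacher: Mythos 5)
Your argument is correct, but it follows a genuinely different route from the paper's. The paper proves coercivity via the comparison principle: around an arbitrary far point $x_0$ it builds the explicit strict subsolution $\psi=\mu\max\{1-|x-x_0|^2,-3\}$ with $\mu=\mu(x_0)\sim(\min_{B_{\sqrt2}(x_0)}f)^{1/m}$, checks $|\cL\psi+\alpha\psi|\leq\mu^m\kappa$, and concludes $w(x_0)\geq\mu(x_0)$ from Lemma~\ref{comparison}; this yields the quantitative rate $w\gtrsim f^{1/m}$ locally. You instead argue by contradiction, assuming some sublevel set $\{w\leq M\}$ is unbounded, perturbing $w$ by a small coercive barrier $\epsilon V(\cdot-y_\epsilon)$ centered at a distant point of that set, and evaluating the viscosity supersolution inequality at the resulting global minimum $z_\epsilon$; the scaling $|z_\epsilon-y_\epsilon|\lesssim\epsilon^{-1/\beta}$ kills the gradient and nonlocal contributions while $|z_\epsilon|\to\infty$ contradicts coercivity of $f$. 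Your version is purely qualitative but more self-contained (it bypasses the comparison lemma entirely, which is pleasant since $\alpha=0$ is allowed here). Two small presentational points: you should invoke only the estimate $|IV(x)|\leq C(1+|x|^{\beta-1})$ established in the proof of Lemma~\ref{lemmaV} rather than the lemma itself, since its statement presupposes \hyperlink{F1}{(F1)} (through the constraint $m(\beta-1)>\gamma$) which is not among the hypotheses of Lemma~\ref{L3.3} --- any $\beta\in(1,2s)$ works for your purposes; and the normalization $V(0)=0$ is unnecessary, as carrying the harmless constant $\epsilon V(0)\leq\epsilon\sup_{B_1}V$ changes nothing.
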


\begin{proof}
The proof relies on comparison principle. Let $x_0\in \rd$ and define
$$\psi=\mu \max\{(1-|x-x_0|^2), -3\},$$
for some $\mu>1$ to be chosen later. It is easily seen that
$\psi\in C^2(B_{2}(x_0))$ and $\psi\geq -3\mu$. Thus,
$$|\cL \psi + \alpha \psi|\leq \mu^m \kappa,\quad \text{in}\; B_{\sqrt{2}}(x_0) $$
for some $\kappa$, independent of $x_0$. 
Let $\mu=\mu(x_0)=
[\frac{1}{2\kappa} \min_{B_{\sqrt{2}}(x_0)} f]^{\nicefrac{1}{m}}$. Since $f$
is coercive, for $|x_0|$ large, we obtain
\begin{align*}
\cL \psi - f + \alpha \psi& < 0\quad \text{in}\; B_{\sqrt{2}}(x_0),
\\
\psi\leq -\mu&\leq w\quad \text{in}\; B^c_{\sqrt{2}}(x_0).
\end{align*}
Since it is a strict subsolution, the comparison principle applies and we have $w\geq \psi$. In particular, $w(x_0)\geq \psi(x_0)=\mu(x_0)$.
Hence, using the definition of $\mu$, we get the desired result.
\end{proof}

Next lemma basically contains the expected property mentioned in point $(ii)$ at the beginning of this section. 
\begin{lem}\label{L3.4}
Let \hyperlink{H1'}{(H1')},\hyperlink{H2'}{(H2')} and 
\hyperlink{F1}{(F1)}-\hyperlink{F2}{(F2)} hold.
For each $\alpha \in (0,1)$, let $w_\alpha$ in Proposition~\ref{T3.2} and denote 
$$
\bar{w}_\alpha(x):=w_\alpha(x)-w_\alpha(0).
$$

There exists $R > 0$ such that
\begin{equation}\label{EL3.4A}
-\max_{\bar B_R} |\bar{w}_\alpha|\leq \bar{w}_\alpha(x)\leq \max_{\bar B_R} |\bar{w}_\alpha| + V(x)\quad x\in\rd,
\end{equation}
where $V$ is given by Lemma~\ref{lemmaV}. Moreover, we have
\begin{equation}\label{EL3.4B}
\sup_{\alpha\in (0, 1)}\, \max_{\bar B_R} |\bar{w}_\alpha| <\infty.
\end{equation}
\end{lem}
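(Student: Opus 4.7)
The key observation is that $\bar w_\alpha$ solves $\cL \bar w_\alpha + \alpha \bar w_\alpha = \tilde f$ with $\tilde f := f - \alpha w_\alpha(0)$, and Proposition~\ref{T3.2} gives the uniform bound $\alpha w_\alpha(0) \leq \alpha V(0) + \upkappa_0 \leq V(0) + \upkappa_0 =: C_0$ for every $\alpha \in (0,1)$. In particular $\tilde f \geq f - C_0$ is still coercive, and since $w_\alpha \geq 0$ we have $\inf_{\rd}\bar w_\alpha \geq -w_\alpha(0) > -\infty$. Applying Lemma~\ref{L3.3} to $\bar w_\alpha$ yields coercivity, so its global minimum is attained at some $y_\alpha \in \rd$.

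At $y_\alpha$, testing with the constant function $\bar w_\alpha(y_\alpha)$ (so that $\grad \bar w_\alpha(y_\alpha) = 0$ and $-I\bar w_\alpha(y_\alpha) \leq 0$) gives $\alpha \bar w_\alpha(y_\alpha) \geq \tilde f(y_\alpha) - H(y_\alpha, 0) \geq f(y_\alpha) - C_0 - \upkappa$. Since $\bar w_\alpha(y_\alpha) \leq \bar w_\alpha(0) = 0$, this forces $f(y_\alpha) \leq C_0 + \upkappa$, and coercivity of $f$ then produces $R_1 > 0$ independent of $\alpha$ with $|y_\alpha| \leq R_1$. Choose $R \geq \max\{R_0, R_1\}$, with $R_0$ from Lemma~\ref{lemmaV}. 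Then $y_\alpha \in \bar{B}_R$, and therefore $\bar w_\alpha(x) \geq \bar w_\alpha(y_\alpha) \geq -\max_{\bar{B}_R}|\bar w_\alpha|$, which is the lower bound in~\eqref{EL3.4A}.

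For the upper bound, set $M_\alpha := \max_{\bar{B}_R}|\bar w_\alpha|$ and $\phi := M_\alpha + V$. Invariance of $I$ under additive constants and of $H$ under shifts through $\grad$ give $\cL \phi = \cL V$; combined with $R \geq R_0$, Lemma~\ref{lemmaV} yields
\[
\cL\phi + \alpha \phi \geq f + \upkappa_1|x|^{m(\beta-1)} + \alpha(M_\alpha + V) \geq f - \alpha w_\alpha(0) = \tilde f \qquad \text{on } B_R^c.
\]
Since $\phi \geq M_\alpha \geq \bar w_\alpha$ on $\bar{B}_R$, an exterior-domain version of the comparison Lemma~\ref{comparison} (applicable thanks to coercivity of $\bar w_\alpha$ and $\phi \in L^1(\omega_s)$) yields $\bar w_\alpha \leq \phi$ on $B_R^c$, completing~\eqref{EL3.4A}.

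The main obstacle is~\eqref{EL3.4B}. From~\eqref{EL3.4A} the two-sided bound $-M_\alpha \leq \bar w_\alpha \leq M_\alpha + V$ produces $\operatorname{osc}_{B_{R+2}}\bar w_\alpha + \mathscr{H}(\bar w_\alpha, B_{R+2}) \leq C(1 + M_\alpha)$, so Theorem~\ref{Thm-Lip} gives a Lipschitz constant $L_\alpha \leq C(1+M_\alpha)^{\theta_1}$ for $\bar w_\alpha$ on $B_R$. Since $\bar w_\alpha(0) = 0$, this yields $M_\alpha \leq L_\alpha R \leq CR(1+M_\alpha)^{\theta_1}$. When the effective exponent $\theta_1 < 1$ this bounds $M_\alpha$ uniformly; in the regime $\theta_1 \geq 1$ I would close~\eqref{EL3.4B} by a contradiction/renormalization: assume $M_{\alpha_n} \to \infty$ along a subsequence, set $\tilde w_n := \bar w_{\alpha_n}/M_{\alpha_n}$, extract a locally uniform limit using the H\"older estimate of Proposition~\ref{P2.1} applied at the rescaled level, and derive a contradiction from viscosity stability together with $\tilde w_n(0) = 0$ and $\|\tilde w_n\|_{L^\infty(\bar{B}_R)} = 1$.
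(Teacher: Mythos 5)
Your lower bound argument matches the paper's: test at the global minimum, use (H2') and the bound $\alpha w_\alpha(0)\leq \upkappa_0+V(0)$ from Proposition~\ref{T3.2}, and invoke coercivity of $f$. That part is fine. The other two steps each have a genuine gap.

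For the upper bound, you invoke ``an exterior-domain version'' of Lemma~\ref{comparison} on $B_R^c$, justified by ``coercivity of $\bar w_\alpha$''. This is the wrong ingredient: coercivity of the \emph{subsolution} does not help; to run the comparison argument on an unbounded domain you need $\sup_{B_R^c}(\bar w_\alpha-\phi)$ to be attained, i.e.\ you need the \emph{supersolution} $\phi=M_\alpha+V$ to dominate $\bar w_\alpha$ at infinity. Proposition~\ref{T3.2} only gives $\bar w_\alpha\leq \upkappa_0/\alpha+V$, so $\bar w_\alpha-\phi$ is bounded above but a maximizer need not exist, and the comparison proof (which locates a maximum point) does not go through as stated. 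The missing fact is $w_\alpha=o(V)$, which the paper obtains by repeating the barrier construction with a smaller exponent $\beta'\in(1,\beta)$ still satisfying $m(\beta'-1)>\gamma$; with that, $V-w_\alpha$ is coercive, its minimum point $z_\alpha$ exists, and testing $V$ against $w_\alpha$ at $z_\alpha$ with~\eqref{Supsol} shows $|z_\alpha|\leq R$ uniformly in $\alpha$, which yields the upper bound directly (no exterior comparison needed).

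For~\eqref{EL3.4B}, your first branch never occurs: with $m>1$ the exponent in~\eqref{estBernstein} satisfies $\frac{2s-\theta_0}{2s+1-m-\theta_0}>1$, and in the case $m\geq 2s+1$ the exponent $\frac{m-\theta_0}{1-\theta_0}>m>1$ as well, so the self-improving inequality $M_\alpha\leq CR(1+M_\alpha)^{\theta_1}$ gives nothing. Your fallback (renormalize $\tilde w_n=\bar w_{\alpha_n}/M_{\alpha_n}$, pass to a limit, derive a rigidity contradiction) is exactly the paper's route, but the two points you leave open are precisely where the work is. First, equicontinuity of $\{\tilde w_n\}$: Proposition~\ref{P2.1} is only stated for $m\geq 2s+1$, and a direct application of Theorem~\ref{Thm-Lip} to $\tilde w_n$ fails because the rescaled Hamiltonian $\tau_n^{-1}H(x,\tau_n p)$ has (H1)-constant of order $\tau_n^{m-1}\to\infty$, so the resulting Lipschitz bound blows up. The paper instead proves a uniform modulus of continuity by a bespoke doubling-of-variables argument that exploits~\hyperlink{H3}{(H3)} and the fact that the coercive term $\tau_n^{m-1}|\grad \tilde w_n|^m$ dominates all other contributions for large $L$. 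Second, the contradiction itself: one must identify the limit equation; dividing by $\tau_n^{m-1}$ and using (H2'), the limit $v$ satisfies $|\grad v|\leq 0$ in the viscosity sense, hence is constant, contradicting $v(0)=0$ and $\max_{\bar B_R}|v|=1$. Without these two ingredients the sketch does not close.
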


\begin{proof}
 Firstly, since $w_\alpha\geq 0$,
 using Lemma~\ref{L3.3} we see that $w_\alpha$ coercive. Then, let $x_\alpha\in \Argmin_{\rd} w_\alpha$.
Using $\varphi\equiv w_\alpha(x_\alpha)$ as a test function for $w_\alpha$ at $x_\alpha$, we get
$$\cL \varphi(x_\alpha) - f(x_\alpha)+\alpha w_\alpha(x_\alpha)\geq 0,$$
implying 
$$f(x_\alpha)-\upkappa\leq f (x_\alpha)-H(x_\alpha, 0)\leq \alpha w_\alpha(x_\alpha)\leq
\alpha w_\alpha(0)\leq \upkappa_0 + V(0),$$
where we have used Proposition~\ref{T3.2} and the fact that $w_\alpha(x_\alpha)\leq w_\alpha(0)$. Since $f$ is coercive, there exists $R > 0$ such that $x_\alpha\in B_R$ for all
$\alpha$. Thus, for all $x \in \rd$ we can write
\begin{equation*}%\label{EL3.2B}
\bar{w}_\alpha(x)\geq w_\alpha(x_\alpha)-w_\alpha(0)\geq
-\sup_{B_R} |\bar{w}_\alpha|.
\end{equation*}

This gives the l.h.s. inequality of \eqref{EL3.4A}. To establish the r.h.s., we first observe that 
$$
\limsup_{|x| \to \infty} \frac{w_\alpha(x)}{V(x)} = 0.
$$

Indeed, taking
$0<\beta'<\beta$ satisfying $m(\beta'-1)>\gamma$,
 and considering $\hat V: \rd \to \R$ smooth, nonnegative, with $\hat{V}=|x|^{\beta'}$ for $|x| > 1$, the calculation in Lemma~\ref{lemmaV} follows along the same lines, and the the argument in Proposition~\ref{T3.2} gives $|w_\alpha|\leq \frac{\kappa_1}{\alpha} +
 \hat{V}$ for some suitable $\kappa_1$. Thus 
 $w_\alpha\in o(V)$. In particular, $V-w_\alpha$ is
 coercive. Let $z_\alpha\in \Argmin_{\rd} (V-w_\alpha)$. We can use $\varphi(x)=V(x)-V(z_\alpha)+ w_\alpha(z_\alpha)$ as
a test function touching $w_\alpha$ at $z_\alpha$ from above.
From~\eqref{Supsol}, we then have
\begin{align*}
0&\geq \cL V(z_\alpha) - f(z_\alpha)+\alpha w_\alpha(z_\alpha)
\\
&\geq -\upkappa_0+\upkappa_1 |z_\alpha|^{m(\beta-1)} - \alpha(V(z_\alpha)-
w_\alpha(z_\alpha)) + \alpha V(z_\alpha)
\\
&\geq -\upkappa_0 + \upkappa_1 |z_\alpha|^{m(\beta-1)} -
\alpha(V(0)-
w_\alpha(0)).
\end{align*}
Using the fact that $V, w_\alpha$ are nonnegative, we arrive at
$$
\upkappa_1 |z_\alpha|^{m(\beta-1)}\leq \upkappa_0 + V(0),
$$
which in turn, implies that there exists $R > 0$ such that
 $z_\alpha\in B_R$ for all $\alpha$. In particular,
$$
\bar{w}_\alpha(x)\leq V(x) - V(z_\alpha) + w_\alpha(z_\alpha)-w_\alpha(0)\leq V(x)+\sup_{B_{R}}|\bar{w}_\alpha|.
$$
This gives us the r.h.s. in~\eqref{EL3.4A}.

Next we deal with~\eqref{EL3.4B}.
Suppose, on the contrary, that  there
exists a sequence $\alpha_n\searrow 0$ such that $\tau_n := \sup_{B_R} |\bar{w}_{\alpha_n}|\to \infty$ as $n\to\infty$. Define
$$
v_n(x)=\frac{1}{\tau_n} \bar{w}_{\alpha_n}(x) \quad \mbox{for} \ x \in \rd.
$$

We have the following

\medskip

\noindent
{\bf\textit{Claim:}} The family $\{ v_{n} \}_n$ is locally equicontinuous.

\medskip

Let us first complete the proof assuming the Claim. We write \eqref{E3.4}
as
$$
- I v_n + \frac{1}{\tau_n} H(x, \tau_n \grad v_n) - \frac{1}{\tau_n} f(x)+\alpha_n v_n=-\frac{1}{\tau_n}\alpha_n w_{\alpha_n}(0) \quad \mbox{in} \ \rd.
$$

Using \hyperlink{H2'}{(H2')} we then obtain
\begin{equation}\label{EL3.2C}
- I v_n + \upkappa^{-1}\tau_n^{m-1} |\grad v_n|^m - \frac{\upkappa}{\tau_n} - \frac{1}{\tau_n} f(x)+\alpha_n v_n + \frac{1}{\tau_n}\alpha_n w_{\alpha_n}(0)\leq 0 \quad \text{in}\; \rd,
\end{equation}
in the viscosity sense. Moreover, \eqref{EL3.4A} gives
\begin{equation}\label{EL3.2D}
-1\leq v_n\leq 1+ \frac{1}{\tau_n} V\quad \text{in}\; \rd.
\end{equation}
From the Claim, we can extract a subsequence of $\{v_n\}_n$ that converges locally uniformly to some $v\in C(\rd)$ which, in view of \eqref{EL3.2D}, satisfies
\begin{equation}\label{EL3.2E}
|v|\leq 1 \quad \text{in}\; \rd,\quad v(0)=0,\quad \text{and}\quad \max_{\bar B_R}|v|=1.
\end{equation}

Moreover, by Proposition~\ref{T3.2} we have $\{\alpha_n w_{\alpha_n}(0)\}_n$ is bounded. Therefore, by stability and \eqref{EL3.2C} we have 
$v$ satisfying
\begin{equation*}%\label{EL3.3F}
|\grad v|\leq 0\quad \text{in}\; \rd,
\end{equation*}
in the viscosity sense. This implies that $v$ is a constant. In fact, for each $x_0 \in \rd$ and all $\varepsilon > 0$, the function $\varphi_\varepsilon(x) = v(x_0)+\varepsilon |x-x_0|$ is a viscosity solution to
$$
|\grad u| \geq \varepsilon \quad \mbox{in} \ \rd \setminus \{ x_0 \}. 
$$
Now, since $\|v\|_\infty \leq 1$, for each $\varepsilon > 0$ there exists $R_\epsilon > 0$ such that $\varphi_\varepsilon \geq v$ in $B_{R_\epsilon}^c \cup \{ x_0 \}$, and since $v$ is a strict subsolution for the problem for which $\varphi_\varepsilon$ is a viscosity supersolution, together with the convexity of the Hamiltonian $p \mapsto |p|$, comparison principle holds (here the regularity of the boundary does not play any role). Hence, $v \leq \varphi_\varepsilon$ in $\rd$ for all $\epsilon$. Taking $\varepsilon \to 0$, we get
$v(x)\leq v(x_0)$ for all $x\in\rd$. Since $x_0$ is an arbitrary point, $v$ must be a constant, which contradicts~\eqref{EL3.2E}. Thus \eqref{EL3.4B} must hold.

\medskip

We finish the proof of the lemma by providing the

\noindent
{\bf \textit{Proof of the Claim:}} In the supercritical case, that is $m>2s$, the proof follows from the H\"{o}lder regularity result of \cite{BKLT15}. We provide a unified proof, motivated
from \cite{BCCI14}. Let
$$
f_n:= -\frac{1}{\tau_n} f(x)+\alpha_n v_n+\frac{1}{\tau_n}\alpha_n w_{\alpha_n}(0).
$$

Then, $\{ f_n \}_n$ is uniformly bounded on compact sets, and $\{ v_n \}_n$ satisfies 
\begin{equation}\label{EL3.2G}
- Iv_n + \frac{1}{\tau_n} H(x, \tau_n \grad v_n) + f_n(x)=0\quad \text{in}\; \rd.
\end{equation}
Also, from \eqref{EL3.2D}, $\{v_n\}$ is locally uniformly bounded.
Given $R > 1$, consider a smooth function $\psi:\rd\to [0, 1]$ which is radially increasing, satisfying $\psi=0$ in $B_R$
and $\psi=1$ in $B^c_{R+\frac{1}{2}}(0)$. Let
$$ 
M= 2\, \sup_{n}\, \sup_{B_{R + 1}} |v_n| < +\infty.
$$
Note that it is enough to show that for each $\eta \in (0,1)$ and for each $\epsilon\in (0, 1)$ there exists  $L > 0$ such that
\begin{equation}\label{EL3.2H}
v_n(x)-v_n(y)\leq L |x-y|^\eta + 2M(1-\epsilon) + M \psi(x) \quad \forall\; x, y\in B_{R + 1}, \ n \in \mathbb N.
\end{equation}
We can conclude a uniform modulus of continuity for the family $\{ v_n \}_n$ in $B_R$ from above.

\vspace{.2in}

Define
$$
\Phi(x, y)= \epsilon v_n(x)-v_n(y) - L|x-y|^\eta-M\psi(x)\quad \text{in}\; B_{R+1}\times B_{R + 1}.
$$
It is easily seen that if $\Phi(x, y)\leq 0$ in $ B_{R + 1}\times B_{R+1}$, then we have \eqref{EL3.2H}. So suppose that
$$
\sup_{B_{R + 1}\times B_{R + 1}}\Phi >0.
$$
Now, by the definition of $\psi$ and $M$, we have $\Phi(x, y)<0$ for $x\in B^c_{R+\frac{1}{2}}(0)$. Thus, if we set $L$ large enough to satisfy
$$L(\nicefrac{1}{4})^\eta> 2M,$$
then we also have $\Phi(x, y)<0$ for $y\in B^c_{R+\frac{3}{4}}(0)$. Thus, there exists points $\bar{x}, \bar{y}\in B_{R + 1}$ (for brevity we drop the dependence on $n$ here) so that
$$
\sup_{B_{R + 1}\times B_{R + 1}}\Phi(x, y)=\Phi(\bar{x}, \bar{y})>0.
$$

If $\bar{x}=\bar{y}$, then 
$$
\Phi(\bar{x}, \bar{y})\leq (\epsilon-1)v_n(\bar{x})\leq M (1-\epsilon),
$$
which in turn gives us \eqref{EL3.2H}, so we are left with the situation when $\bar{x}\neq \bar{y}$. Since 
$\Phi(\bar{x},\bar{y})\geq \Phi(\bar{x}, \bar{x})$, we obtain
\begin{equation}\label{EL3.2I}
L|\bar{x}-\bar{y}|^\eta \leq v_n(\bar{x})-v_n(\bar{y})\leq M,
\end{equation}
and the choice of $L$ above implies that $|\bar{x}-\bar{y}|<\frac{1}{4}$. Denote by
$$p=L\eta |\bar{x}-\bar{y}|^{\eta-2}(\bar{x}-\bar{y}), \quad q=\grad\psi(\bar{x}).$$
From \eqref{EL3.2I}, it follows that
\begin{equation}\label{EL3.2J}
|p|=L \eta|\bar{x}-\bar{y}|^{\eta-1}\geq M^{\frac{\eta-1}{\eta}} L^{\frac{1}{\eta}}\eta.
\end{equation}

Define $\phi(x, y)= L|x-y|^\eta + M\psi(x)$. Recalling the notation~\eqref{Ieval} , we use 
the viscosity inequalities for $v_n$ solving~\eqref{EL3.2G} to write, for each $\delta\in (0, \frac{1}{4})$ that
\begin{equation}\label{EL3.2K}
\begin{split}
-I[B_\delta](\phi(\cdot, \bar{y}), p + Mq, \bar{x}) - I[B^c_\delta](\epsilon v_n, p+Mq, \bar{x}) + \epsilon\tau^{-1}_n H(\bar{x}, \epsilon^{-1} \tau_n(p+Mq)) + \epsilon f_n(\bar{x})&\leq 0,
\\
-I[B_\delta](\phi(\bar{x}, \cdot), p, \bar{y}) - I[B^c_\delta](v_n,p, \bar{y}) +  \tau^{-1}_n H(\bar{y},  \tau_n p) +  f_n(\bar{y})&\geq 0.
\end{split}
\end{equation}
We subtract both inequalities and proceed to estimate the terms arising there. Concerning the contribution of the Hamiltonian, using \hyperlink{H3}{(H3)} and \hyperlink{H1'}{(H1')} we see that
\begin{align*}
& \epsilon H(\bar x, \epsilon^{-1} \tau_n (p + Mq)) - H(\bar y, \tau_n p) \\
= & \epsilon H(\bar x, \epsilon^{-1} \tau_n (p + Mq)) - H(\bar x, \tau_n (p + Mq)) + H(\bar x, \tau_n (p + Mq)) - H(\bar y, \tau_n p) \\
\geq & (1 - \epsilon) (b_m |\tau_n (p + Mq)|^m - C) \\
& - C_{H,R+1} \Big{(}|\bar x - \bar y|(1 + |\tau_n p|^m)
+ |\tau_n Mq|(1 + |\tau_n p|^{m - 1} + |\tau_n Mq|^{m - 1})\Big{)}.
\end{align*}
But $|q|$ is bounded, meanwhile, by~\eqref{EL3.2I} and~\eqref{EL3.2J}, $|p| \to \infty$, $|\bar x - \bar y| \to 0$ as $L \to +\infty$. Thus, by taking $L$ large enough, depending on $1 - \epsilon, M, R$ and the data, we conclude that
\begin{equation*}%\label{EL3.2L}
\epsilon H(\bar x, \epsilon^{-1} \tau_n (p + Mq)) - H(\bar y, \tau_n p) \geq \frac{1}{2}(1 - \epsilon) b_m \tau_n^m |p|^m - C_\epsilon,
\end{equation*}
for some constant $C_\epsilon > 0$ not depending on $n$. 
% using \eqref{EH2} and  we have
% \begin{align*}
% &\epsilon\tau^{-1}_n H(\bar{x}, \epsilon^{-1}\tau_n(p+Mq))-\tau^{-1}_n H(\bar{y},  \tau_n p)
% \\
% &= \epsilon\tau^{-1}_n H(\bar{y},  \epsilon^{-1}\tau_n p)-\tau^{-1}_n H(\bar{y},  \tau_n p) + 
% \epsilon\tau^{-1}_n H(\bar{x}, \epsilon^{-1}\tau_n(p+Mq))-\epsilon\tau^{-1}_n H(\bar{y},  \epsilon^{-1}\tau_n p)
% \\
% &\geq \tau^{-1}_n [(1-\epsilon)(b_m|\tau_n p|^m-C)]
% - \epsilon\tau^{-1}_n C_{R+1}\Bigl[|\bar{x}-\bar{y}|(1+|\epsilon^{-1}\tau_n p|^m) 
% \\
% &\quad + \epsilon^{-1}\tau_n M |q|(1+
% |\epsilon^{-1}\tau_n p|^{m-1} + |\epsilon^{-1}\tau_n Mq|^{m-1})\Bigr]
% \\
% &\geq (\tau_n)^{m-1} [(1-\epsilon)(b_m|p|^m-C)]
% - \epsilon\tau^{m-1}_n C_{R+1}\Bigl[|\bar{x}-\bar{y}|(1+|\epsilon^{-1}p|^m) 
% \\
% &\quad + \epsilon^{-1} M |q|(1+
% |\epsilon^{-1} p|^{m-1} + |\epsilon^{-1} Mq|^{m-1})\Bigr].
% \end{align*}
% Since $|p|\to \infty$ as $L\to\infty$ by \eqref{EL3.2J}, we can choose $L$
% large enough, depending on $\epsilon, M, C_{R+1}$, and use
% \eqref{EL3.2I} to get
% \begin{equation}
% \epsilon\tau^{-1}_n H(\bar{x}, \epsilon^{-1}\tau_n(p+Mq))-\tau^{-1}_n H(\bar{y},  \tau_n p)\geq \kappa_1 |p|^m-\kappa_2 
% \end{equation}
% for some constants $\kappa_1, \kappa_2$, dependent on $\epsilon, R, M$.

Concerning the contribution of the non local terms, 
using $\Phi(\bar{x}, \bar{y})\geq \Phi(\bar{x}+z, \bar{y}+z)$
in $B_{R + 1}$, we see that
\begin{align*}
I[B_{\frac{1}{4}}\cap B^c_\delta](\epsilon v_n, p+Mq, \bar{x})-
I[B_{\frac{1}{4}}\cap B^c_\delta](v_n, p, \bar{y})
\leq M I[B_{\frac{1}{4}}\cap B^c_\delta](\psi, q, \bar{x})
\leq C M,
\end{align*}
for some $C > 0$ not depending on $n$, nor $\epsilon$. Again, using \eqref{EL3.2D}, we see that
$$\sup_{z\in B_{R+1}} |I[B^c_{\frac{1}{4}}](u, p, z)|\leq C_1 (1+|p|),$$
for some constant $C_1$, not depending on $n$.

Using these estimates and letting $\delta\to 0$, we conclude from the difference of the inequalities in~\eqref{EL3.2K} that 
\begin{equation*}
\frac{1}{2}(1 - \epsilon) b_m \tau^{m - 1} |p|^m \leq C_\epsilon + CM + 2 \|f_n \|_{L^\infty(B_{R + 1})} + C_1 (1+|p|).
\end{equation*}
Thus, in view of \eqref{EL3.2J} and since $m>1$, taking $L$ large enough in terms of $\epsilon$ and $R$, we arrive at a contradiction.
% using \eqref{EL3.2L} and sending $\delta\to 0$ we obtain
% $$\kappa_1 |p|^m\leq C(M, \epsilon, R),$$
% where the constant $C(M, \epsilon, R)$ depends on $M, \epsilon$ and $R$.
% In view of \eqref{EL3.2J}, if we choose $L$ large enough, we arrive at a contradiction. 
Thus $\bar{x}\neq \bar{y}$ is not possible for all
large $L$. This completes the proof of Claim, and hence the proof of the lemma.
\end{proof}

%Let $C^{2s+}(\rd)$ denote the set of all functions that are
%locally $C^{2s+\eta}, \eta>0,$ regular. That is, $f\in C^{2s+}(\rd)$
%if and only if
%for every compact $K\subset \rd$, there exists 
%$\eta>0$ such that $f\in C^{2s+\eta}(K)$.
%In view of Lemmas~\ref{L3.3}, \ref{L3.4}, Theorem~\ref{T2.2} and proof of Theorem~\ref{T3.2} the following result is immediate.

Now we are in position to provide the following existence result.
\begin{thm}\label{T3.5}
Suppose that \hyperlink{H1'}{(H1')}, \hyperlink{H2'}{(H2')} and
\hyperlink{F1}{(F1)}--\hyperlink{F2}{(F2)} hold. Then, there exists a solution $(u, \tilde \lambda) \in C(\rd) \times \R$ for~\eqref{E3.1}. 
Moreover, $u$ is coercive. In addition, if $f$ is locally H\"{o}lder
continuous and $K$ is symmetric satisfying \eqref{KLip}, then $u\in C^{2s+}(\rd)$.
\end{thm}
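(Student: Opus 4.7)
The plan is to pass to the limit $\alpha \searrow 0$ in the sequence $\bar w_\alpha = w_\alpha - w_\alpha(0)$, where $w_\alpha$ is the discounted solution from Proposition~\ref{T3.2}. Since $\cL$ is invariant under additive constants, $\bar w_\alpha$ satisfies, in the viscosity sense,
\begin{equation*}
\cL \bar w_\alpha = f - \alpha \bar w_\alpha - \alpha w_\alpha(0) \quad \text{in} \ \rd.
\end{equation*}
By Lemma~\ref{L3.4}, one has the pointwise envelope $-C \leq \bar w_\alpha \leq C + V$ together with the uniform bound $\sup_\alpha \max_{\bar B_R}|\bar w_\alpha|<\infty$; since $\beta < 2s$, $V \in L^1(\omega_s)$, yielding a uniform $L^1(\omega_s)$ control on the family. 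Moreover, the proof of Lemma~\ref{L3.4} already gives $0 \leq \alpha w_\alpha(0) \leq \upkappa + \upkappa_0 + V(0)$, so the discount term is uniformly bounded in $\alpha$.

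I would then invoke the a priori Lipschitz estimate of Theorem~\ref{Thm-Lip} applied to $\bar w_\alpha$ on each ball $B_R$. Here $C_{H,R+2}$ is uniform by \hyperlink{H1'}{(H1')}, while $\mathfrak{Z}_1$ and $\mathfrak{Z}_2$ are controlled by the uniform sup-bound, the uniform $\mathscr{H}(\bar w_\alpha, B_{R+2})$-bound obtained from the $V$-envelope, and the fact that the effective source $f - \alpha w_\alpha(0)$ has $\alpha$-independent oscillation. This yields local equi-Lipschitz estimates, so Arzelà-Ascoli and a diagonal extraction produce a subsequence $\alpha_n \searrow 0$ along which $\bar w_{\alpha_n} \to u$ locally uniformly for some $u \in C(\rd)$; a further extraction gives $\alpha_n w_{\alpha_n}(0) \to \tilde \lambda$. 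The passage to the limit in the viscosity equation is then standard: $\alpha_n \bar w_{\alpha_n} \to 0$ locally uniformly, and the nonlocal tail converges by dominated convergence thanks to the domination $|\bar w_{\alpha_n}| \leq C + V \in L^1(\omega_s)$; viscosity stability delivers $\cL u = f - \tilde\lambda$ in $\rd$.

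For coercivity, $\inf_\rd u > -\infty$ follows from the lower bound in \eqref{EL3.4A}, and $u$ is a viscosity supersolution of $\cL u - (f-\tilde\lambda) = 0$ with a still-coercive source; Lemma~\ref{L3.3} (with $\alpha=0$ and source $f - \tilde\lambda$) then yields $\lim_{|x|\to\infty} u(x) = \infty$. Finally, if $f$ is locally Hölder continuous and $K$ is symmetric satisfying \eqref{KLip}, Theorem~\ref{T2.2} applies to the limit equation (taking $D_1 = \rd$) and upgrades $u$ to $C^{2s+}(\rd)$. The main subtlety throughout is the uniformity in $\alpha$ of every constant entering the Lipschitz estimate; this is precisely what the preparatory work in Lemmas~\ref{L3.3} and~\ref{L3.4} was designed to guarantee, so the argument here is essentially an assembly of already-available ingredients with a careful vanishing-discount stability step.
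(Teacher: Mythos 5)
Your proposal is correct and follows essentially the same route as the paper: extract a locally uniformly convergent subsequence of $\bar w_{\alpha}$ using the bounds of Lemma~\ref{L3.4} together with the local Lipschitz estimate, pass to the limit by viscosity stability (with the $L^1(\omega_s)$ domination by $C+V$ handling the nonlocal tail), and then invoke Lemma~\ref{L3.3} for coercivity and Theorem~\ref{T2.2} for the $C^{2s+}$ regularity. The paper's own proof is just a terser assembly of exactly these ingredients.
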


\begin{proof}
Let $\bar w_\alpha$ as in Lemma~\ref{L3.4}. 
Then, by~\eqref{EL3.4A}, there exists $\tilde \lambda \in \R$, $u: \rd \to \R$ with $\inf_{\rd}u>-\infty$, and a sequence $\alpha_k \to 0$ such that $\bar{w}_{\alpha_{k}}\to u\in C_{loc}(\rd)\cap L^1(\omega_s)$, and 
$$
\tilde\lambda = \lim_{k\to\infty}\alpha_{k}w_{\alpha_k}(0).
$$
By stability of the viscosity solution, we have $(u, \tilde \lambda)$ that solves~\eqref{E3.1}. This leads to the existence of a solution. 

The function $u$ is coercive by Lemma~\eqref{L3.3}, and 
$C^{2s+}$ regularity follows from Theorem~\ref{T2.2} when $f$ is locally  H\"older.
\end{proof}

\subsection{Nonexistence.} We finish this section with the following
\begin{thm}\label{T-nonex}
Let \hyperlink{H2'}{(H2')} hold and $I=-(-\Delta)^s$.
Suppose that for some constants $\kappa_0, \kappa_1$ we have
\begin{equation}\label{ET1.1A}
f(x)\geq \kappa_0 |x|^{m(2s-1)}\quad \text{for}\; |x|\geq \kappa_1.
\end{equation}

Then, there exists no pair $(u, \lambda) \in (C(\rd) \cap L^1(\omega_s)) \times \R$ with $u\geq 0$ that satisfy
\begin{equation*}%\label{ET1.1B}
    \cL u - f+\lambda\geq 0 \quad \text{in}\; \rd.
\end{equation*}

In particular, recalling $\lambda^*$ in~\eqref{lam-crit}, we have $\lambda^*=\infty$.
\end{thm}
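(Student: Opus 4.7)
The plan is to argue by contradiction and upgrade the simple coercivity bound of Lemma~\ref{L3.3} to the critical growth rate $|x|^{2s}$, which is incompatible with $u\in L^1(\omega_s)$. Suppose there exists a pair $(u,\lambda)\in(C(\rd)\cap L^1(\omega_s))\times\R$ with $u\geq 0$ satisfying $\mathcal{L}u-f+\lambda\geq 0$ in the viscosity sense. For $x_0\in\rd$ with $|x_0|$ large and a scale $R>0$ to be chosen, I consider the scaled bump
\[
\psi_{x_0,R}(x):=\mu\,\phi\!\bigl((x-x_0)/R\bigr),\qquad \phi(z):=\max\{1-|z|^2,-3\},
\]
which is $C^2$ on $B_{\sqrt2\,R}(x_0)$, attains its maximum $\mu$ at $x_0$, and satisfies $\psi_{x_0,R}\leq 0$ on $B_R(x_0)^c$, hence on $B_{\sqrt2\,R}(x_0)^c$.

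By $2s$-scaling of the fractional Laplacian and the boundedness of $(-\Delta)^s\phi$ on $B_{\sqrt2}$ (uniform since $\phi$ is smooth away from the hinge $|z|=2$), one has $|(-\Delta)^s\psi_{x_0,R}|\leq C_s\mu R^{-2s}$ on $B_{\sqrt2\,R}(x_0)$, while \hyperlink{H2'}{(H2')} gives $H(x,\grad\psi_{x_0,R})\leq \upkappa+\upkappa(2\mu/R)^m$. Hence
\[
\mathcal{L}\psi_{x_0,R}(x)\leq C_s\mu R^{-2s}+\upkappa+\upkappa(2\mu/R)^m\qquad \text{on } B_{\sqrt2\,R}(x_0).
\]
Choosing $R=|x_0|/4$ and $\mu=c|x_0|^{2s}$ for a small $c>0$ gives $\mu R^{-2s}=4^{2s}c$ and $(2\mu/R)^m=(8c)^m|x_0|^{m(2s-1)}$. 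For $x\in B_{\sqrt2\,R}(x_0)$ and $|x_0|$ large, $|x|\geq |x_0|/2\geq \kappa_1$, so by \eqref{ET1.1A}, $f(x)\geq \kappa_0 2^{-m(2s-1)}|x_0|^{m(2s-1)}$. Fixing $c$ so that $\upkappa(8c)^m<\tfrac{\kappa_0}{2}2^{-m(2s-1)}$ yields $\mathcal{L}\psi_{x_0,R}<f-\lambda$ strictly on $B_{\sqrt2\,R}(x_0)$ for all $|x_0|$ sufficiently large.

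Since $\psi_{x_0,R}\leq 0\leq u$ on $B_{\sqrt2\,R}(x_0)^c$, the strict-subsolution vs. supersolution comparison argument used in the proof of Lemma~\ref{L3.3}, applied in the bounded domain $B_{\sqrt2\,R}(x_0)$, gives $\psi_{x_0,R}\leq u$ there. Evaluating at $x_0$ yields $u(x_0)\geq c|x_0|^{2s}$ for all $|x_0|$ large, so $u(y)\geq c|y|^{2s}$ outside a fixed ball, and therefore
\[
\int_{\rd}\frac{u(y)}{1+|y|^{d+2s}}\,dy\geq c\int_{R_0}^{\infty} r^{-1}\,dr=\infty,
\]
contradicting $u\in L^1(\omega_s)$. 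The conclusion $\lambda^*=\infty$ follows because the infimum in \eqref{lam-crit} is taken over an empty class.

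The only delicate point is the choice of scale. A fixed-radius bump (as in Lemma~\ref{L3.3}) yields only $u(x)\gtrsim|x|^{2s-1}$, which is harmlessly compatible with $L^1(\omega_s)$. Taking $R$ proportional to $|x_0|$ forces the nonlinear Hamiltonian term $(\mu/R)^m$ to match the source growth $|x_0|^{m(2s-1)}$ while permitting $\mu$ to reach $|x_0|^{2s}$; the exponent $m(2s-1)$ appearing in \eqref{ET1.1A} is precisely the threshold at which this balance produces a subsolution whose height at $x_0$ exceeds the $L^1(\omega_s)$ tail rate.
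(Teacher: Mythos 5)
Your proof is correct and follows essentially the same strategy as the paper: build a strict subsolution barrier of height $\sim|x_0|^{2s}$ supported on a ball of radius $\sim|x_0|$ around a far point, verify the subsolution property by balancing the Hamiltonian term $(\mu/R)^m\sim|x_0|^{m(2s-1)}$ against the growth of $f$, apply comparison to get $u(x_0)\gtrsim|x_0|^{2s}$, and contradict $u\in L^1(\omega_s)$. The only difference is cosmetic: the paper uses the exact fractional torsion profile $\eta\tilde c\,[(\theta^2-|x-x_0|^2)_+^s-\tfrac{3^s}{4^s}\theta^{2s}]$ with $\theta=|x_0|/2$, while you use a rescaled smooth quadratic bump together with the $2s$-scaling of $(-\Delta)^s$; both yield the same exponents and the same conclusion.
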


\begin{proof}
Since $f-\lambda$ would satisfy a similar lower bound
as in \eqref{ET1.1A}, we may assume that $\lambda=0$.
Let $\eta\in (0, 1)$ and $|x_0| > 1$ large enough. Define $\theta=\frac{1}{2}|x_0|$, and 
$$
\psi(x)=\eta \tilde c[(\theta^2-|x-x_0|^2)_+^s- \frac{3^s}{4^s}\theta^{2s}]\quad \text{in}\; B_\theta(x_0).
$$

Here $\tilde c = \tilde c(\theta, s, d) > 0$ is a suitable constant so that $(-\Delta)^s\psi=\eta$ in 
$B_\theta(x_0)$. Using \hyperlink{H2'}{(H2')}, we have in $B_{\theta/2}(x_0)$
that
\begin{align*}
    \cL\psi(x) - f(x) &\leq \eta +\upkappa + 
    \upkappa (c_{d,s}\eta)^m s^m (\theta^2-|x-x_0|^2)^{m(s-1)} 2^m|x-x_0|^m - f(x)
    \\
    &\leq \eta + \upkappa + \kappa_2 \eta^m  |\theta|^{m(2s-1)}- \kappa_3 
    |x_0|^{m(2s-1)},
\end{align*}
for some constants $\kappa_2, \kappa_3$, where we used the fact that $|x|\geq |x_0|-|x-x_0|\geq \frac{3}{4}|x_0|$ in $B_{\frac{\theta}{2}}(x_0)$.
Thus,  if we fix $\eta$ small enough, so that $\kappa_22^{-m}\eta^m<\frac{1}{2}\kappa_3$, 
 we get
$$\cL \psi - f< \eta + \upkappa-\frac{\kappa_3}{2}|x_0|^{m(2s-1)}
<0 \quad B_{\frac{\theta}{2}}(x_0),$$
for all $|x_0|$ large. Since $\psi\leq 0$ in $B^c_{\frac{\theta}{2}}(x_0)$, by the comparison principle we get $u\geq \psi$. Evaluating
at $x=x_0$ we have $u(x_0)\geq \eta c_{d,s} (1-\frac{3^s}{4^s})\theta^{2s}\geq \kappa_4 |x_0|^{2s} $ for some constant $\kappa_4$ and all
 $|x_0|$ large. Then $u\notin L^1(\omega_s)$, which is a contradiction. Then, for each $\lambda \in \R$ there is no function $u \in C(\rd) \cap L^1(\omega_s)$ such that $\mathcal L u \geq f - \lambda$ in $\rd$, from which $\lambda^* = +\infty$. This completes the proof.
\end{proof}

\begin{rem}
We do not know if the above nonexistence result is possible 
under a condition weaker than~\eqref{ET1.1A}, namely
$$
\limsup_{|x| \to \infty} \frac{f(x)}{|x|^{m(2s - 1)}} > 0.
$$
\end{rem}

%This last proposition leads us to the existence of the solution for the ergodic problem~\eqref{ET1.2A}. 

%Next, we continue with the characterization of the ergodic constant.

%%%%%%%%%%%%%%%%%%%%%%%%%%%%%%%

\section{Characterization of $\lambda^*$.} 
\label{seccharact}

We start with the characterization~\eqref{lam-crit}.
% In this section we prove the characterizations of $\lambda^*$ using
% sub and supersolutions. We first show that $\tilde\lambda$ obtained in
% Theorem~\ref{T3.5} is equal to $\lambda^*$.
\begin{prop}\label{T4.1}
Grant the setting of Theorem~\ref{T3.5}, and let $\tilde \lambda$ be the constant found there. Then,
$\tilde\lambda=\lambda^*$ where $\lambda^*$ is given by \eqref{lam-crit}.
\end{prop}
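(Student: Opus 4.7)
The plan is to establish $\tilde\lambda = \lambda^*$ via two inequalities. The direction $\lambda^* \leq \tilde\lambda$ is immediate: the pair $(u, \tilde\lambda)$ constructed in Theorem~\ref{T3.5} has $u \in C_+(\rd) \cap L^1(\omega_s)$ and solves $\cL u = f - \tilde\lambda$, so $\tilde\lambda$ lies in the admissible set of the infimum in~\eqref{lam-crit}.

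For the reverse inequality $\tilde\lambda \leq \lambda^*$, I would compare each admissible supersolution with the discounted solutions $w_\alpha$ from Proposition~\ref{T3.2}. Fix $\lambda$ admissible together with $v \in C_+(\rd) \cap L^1(\omega_s)$ satisfying $\cL v \geq f - \lambda$. The key observation is that the shifted function $\tilde v_\alpha := v + \lambda/\alpha$ is a viscosity supersolution to the $\alpha$-discounted equation $\cL u + \alpha u = f$: since $\cL$ is invariant under additive constants,
\begin{equation*}
\cL \tilde v_\alpha + \alpha \tilde v_\alpha = \cL v + \alpha v + \lambda \geq (f - \lambda) + \alpha v + \lambda \geq f,
\end{equation*}
using $v \geq 0$.

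Next, I would compare $\tilde v_\alpha$ with the Dirichlet approximations $W_n$ of~\eqref{EBD-1}. To apply the comparison principle (Lemma~\ref{comparison}) on $B_n$, I need the boundary inequality $\tilde v_\alpha \geq W_n = 0$ on $B_n^c$, which requires $v \geq -\lambda/\alpha$ far from the origin. For this I would invoke Lemma~\ref{L3.3} with $\alpha = 0$ and $f$ replaced by the still coercive $f - \lambda$ (noting that $\inf v \geq 0 > -\infty$) to conclude $v$ is coercive; hence for $n$ large depending on $\alpha$ we have $\tilde v_\alpha \geq 0$ on $B_n^c$. Lemma~\ref{comparison} then yields $W_n \leq \tilde v_\alpha$ on $B_n$, and passing $n \to \infty$ along the subsequence of Proposition~\ref{T3.2} gives $w_\alpha \leq v + \lambda/\alpha$ on $\rd$.

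Finally, evaluating at $x=0$ and multiplying by $\alpha$ produces $\alpha w_\alpha(0) \leq \alpha v(0) + \lambda$ for all $\alpha \in (0,1)$. Sending $\alpha \to 0$ along the subsequence of Theorem~\ref{T3.5}, for which $\alpha w_\alpha(0) \to \tilde\lambda$, yields $\tilde\lambda \leq \lambda$, and since $\lambda$ is arbitrary admissible we conclude $\tilde\lambda \leq \lambda^*$. The main obstacle is ensuring the boundary comparison $\tilde v_\alpha \geq 0$ on $B_n^c$ that unlocks Lemma~\ref{comparison}; this is precisely what the coercivity of $v$ supplied by Lemma~\ref{L3.3} provides, while the invariance of $\cL$ under additive constants makes the shift by $\lambda/\alpha$ harmless on the equation side.
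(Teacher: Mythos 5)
Your proposal is correct and follows essentially the same route as the paper: both arguments compare a constant shift of $v$ against the Dirichlet approximations $W_n$ of the discounted problem via Lemma~\ref{comparison}, with the coercivity of $v$ supplied by Lemma~\ref{L3.3} (applied to $\cL v \geq f-\lambda$) handling the exterior boundary inequality, and then pass $n\to\infty$ and $\alpha\to 0$ along the subsequences of Proposition~\ref{T3.2} and Theorem~\ref{T3.5}. The only difference is organizational: the paper argues by contradiction, fixing an arbitrary $M>0$ with $\lambda<\alpha_k w_{\alpha_k}(0)-\alpha_k M$ and deducing $u+M\leq v$ for every $M$, whereas you keep the dependence explicit through the equivalent inequality $w_\alpha\leq v+\lambda/\alpha$ and read off $\tilde\lambda\leq\lambda$ directly, which is a slightly cleaner presentation of the same estimate.
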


\begin{proof}
From \eqref{lam-crit} it follows that $\lambda^*\leq \tilde\lambda$.
Suppose that $\lambda^*< \tilde\lambda$. Therefore, by definition,
we can find $\lambda\in(\lambda^*, \tilde\lambda)$ and $v\in C(\rd)\cap L^1(\omega_s), v\geq 0$, satisfying
\begin{equation*}%\label{ET4.1A}
\cL v - f + \lambda\geq 0 \quad \text{in}\; \rd.
\end{equation*}

Let $(u, \tilde\lambda)$ be the solution obtained in Theorem~\ref{T3.5} (with approximating sequence $\{ \bar w_{\alpha_k} \}$). 
Fix $M>0$. Since $\alpha_{k}w_{\alpha_k}\to \tilde\lambda$, we can then find $k_0 \in \mathbb N$ such that
$$
\lambda< \alpha_{k} w_{\alpha_{k}}(0) - \alpha_{k} M
 \quad \text{for all}\; k \geq k_0.
$$
We fix any such $k \geq k_0$, denote $\alpha_{k}$ by $\tilde{\alpha}$, and $\bar w_{\alpha_{k}}$ by $\tilde{w}$. Recall from 
Theorem~\ref{T3.2} that there exists a sequence of $\{W_l\}$ satisfying 
\begin{equation}\label{ET4.1B}
\begin{split}
\cL W_l -f + \tilde\alpha W_l=&0 \hspace{3mm} \text{in}\hspace{2mm} B_l,
\\
W_l=&0 \hspace{3mm} \text{in}\hspace{2mm} B^c_l,
\end{split}
\end{equation}
such that $W_l\to w_{\alpha_k}$ in $ L^1(\omega_s)$
and uniformly on compacts, as $l\to\infty$. Moreover,
due to \eqref{E3.3} we also have $W_l\geq 0$. Denote by
$\zeta_l(x)=W_l(x)-W_l(0)+M$. It then follows from \eqref{ET4.1B} that
\begin{equation}\label{ET4.1C}
\begin{split}
\cL\zeta_l(x) - f(x)+\tilde\alpha \zeta_l=&-\tilde\alpha W_l(0) +
\tilde\alpha M \hspace{3mm} \text{in}\hspace{2mm} B_l,
\\
\zeta_l=&-W_l(0)+M\leq M \hspace{3mm} \text{in}\hspace{2mm} B^c_l.
\end{split}
\end{equation}
For $l$ large enough we see that $\lambda< \tilde\alpha W_l(0) -\tilde\alpha M$. Again, by Lemma~\ref{L3.3}, $v(x)\to\infty$, as $|x|\to \infty$. Thus, for all large $l$ , $v$ is a strict supsersolution to
\eqref{ET4.1C}. Thus,
by comparison principle Lemma~\ref{comparison}, we have $v\geq \zeta_l$ in $\rd$.
Now letting $l\to\infty$, we obtain $w_{\alpha_{k}}(x)-w_{\alpha_{k}}(0) + M\leq v$ for all $k\geq k_0$. Letting 
$k\to \infty$ we obtain from Theorem~\ref{T3.5} that $u+M\leq v$ in $\rd$. But $u$ is a fixed function and $M$ is arbitrary. Therefore, such $v$ can not exist. Hence $\tilde{\lambda}=\lambda^*$.
\end{proof}

In the remaining part of this section, we prove \eqref{ET1.2B}.
\begin{thm}\label{T4.2}
Suppose that \hyperlink{H1'}{(H1')},\hyperlink{H2'}{(H2')}
\hyperlink{F1}{(F1)}--\hyperlink{F2}{(F2)} hold and $x\mapsto H(x, p)$ is periodic. Then we have $\lambda^*$ satisfying characterization~\eqref{ET1.2B}.
% \begin{equation}\label{ET4.2A}
% \lambda^*=\sup\{\lambda\; : \; \exists \, u\in C(\rd)\; 
% \text{such that}\; u\leq 0\; \text{and}\; \cL u\leq f-\lambda\; \text{in}\; \rd\}.
% \end{equation}
\end{thm}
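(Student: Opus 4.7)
\textit{Plan.} The proof establishes both inequalities defining the equality in~\eqref{ET1.2B}. Let $\tilde{\lambda}$ denote the right-hand side of~\eqref{ET1.2B}.

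For $\tilde{\lambda}\leq \lambda^*$, let $v\in C(\rd)$ with $v\leq 0$ be a viscosity subsolution of $\cL v\leq f-\lambda$, and assume for contradiction that $\lambda>\lambda^*$. Let $(u^*,\lambda^*)$ be the ergodic pair from Theorem~\ref{T3.5}; under the standing assumptions it enjoys the local $C^{1,\bar\eta}$ regularity from Theorem~\ref{T2.2}. Since $u^*\geq 0$ is coercive and $v\leq 0$, the function $v-u^*$ satisfies $v-u^*\to-\infty$ as $|x|\to\infty$, hence attains its supremum at some $x_0\in\rd$ with value $M:=(v-u^*)(x_0)\leq 0$. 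Consequently $u^*+M\geq v$ globally with equality at $x_0$. Plugging $u^*+M$ into the nonlocal viscosity subsolution test for $v$ at $x_0$---and bounding the tail of the nonlocal integral using the global inequality $v\leq u^*+M$---we obtain $\cL u^*(x_0)\leq f(x_0)-\lambda$; combined with the identity $\cL u^*=f-\lambda^*$ this gives $\lambda\leq\lambda^*$, a contradiction.

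For $\tilde{\lambda}\geq \lambda^*$, we exploit the periodicity of $H(\cdot,p)$. Let $L_0$ denote the period. For each $n\in\N$ we construct a continuous, $nL_0$-periodic function $g_n$ satisfying $g_n\leq f$ on $\rd$, $g_n=f$ on $B_{nL_0/8}$, and $g_n\equiv\min_\rd f$ outside $B_{nL_0/4}$ within each period cell (via a smooth cutoff; this is well-defined because $g_n$ is constant near the boundary of each cell). Since $\cL$ commutes with $nL_0$-translations, classical periodic cell problem theory (as in~\cite{BCCI14,BKLT15,BLT-17}) produces a periodic $\phi_n\in C(\rd)$ and $\mu_n\in\R$ solving $\cL \phi_n=g_n-\mu_n$ in the viscosity sense. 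The function $\tilde\phi_n:=\phi_n-\sup_\rd\phi_n\leq 0$ is bounded, periodic, and satisfies
\begin{equation*}
\cL\tilde\phi_n=g_n-\mu_n\leq f-\mu_n \quad \text{in } \rd,
\end{equation*}
using $g_n\leq f$. Thus $\tilde\phi_n$ is a nonpositive subsolution corresponding to $\lambda=\mu_n$, giving $\tilde{\lambda}\geq \mu_n$. The proof is completed by showing $\mu_n\to\lambda^*$ as $n\to\infty$: uniform local Lipschitz estimates on $\phi_n$ (via Theorem~\ref{Thm-Lip} applied with the uniformly bounded right-hand side $g_n$) allow one to extract a subsequence with $\phi_n-\phi_n(0)$ converging locally uniformly to some $u^*_\infty\in C(\rd)\cap L^1(\omega_s)$; stability of viscosity solutions yields $\cL u^*_\infty=f-\lim_n\mu_n$, and the uniqueness of the ergodic constant (Proposition~\ref{T4.1} combined with the nonnegativity afforded by the shift) forces $\lim_n\mu_n=\lambda^*$.

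\textit{Main obstacle.} The delicate step is the convergence $\mu_n\to\lambda^*$ in the second direction: one must combine the uniform Lipschitz estimate with the characterization of $\lambda^*$ in Proposition~\ref{T4.1} to identify the limit of the periodic eigenvalues with the whole-space ergodic constant, and to ensure that the periodic approximation $g_n$ is compatible with both the $H$-periodicity and the coercivity of $f$ (so that $g_n\leq f$ holds globally and $g_n\to f$ locally).
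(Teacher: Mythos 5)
Your overall strategy coincides with the paper's: one inequality via comparison of a bounded-above subsolution with the coercive solution $u$ of Theorem~\ref{T3.5}, the other via periodic truncations of $f$, the periodic cell problem, and a passage to the limit. However, both halves have genuine gaps as written.

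For $\tilde\lambda\leq\lambda^*$, the step ``plug $u^*+M$ into the viscosity subsolution test for $v$ at $x_0$'' is not justified. Under the hypotheses of Theorem~\ref{T4.2}, $f$ is only continuous, so Theorem~\ref{T2.2} gives $u^*\in C^{1,\bar\eta}_{loc}$ only for $\bar\eta<2s-1$; this is not enough to evaluate $(-\Delta)^su^*$ pointwise, so $u^*+M$ is not an admissible test function, and one cannot simply compare two viscosity (sub/super)solutions at a touching point. The paper's Lemma~\ref{L4.3} handles exactly this by truncating ($\bar u_N=\min\{\bar u,N\}$), introducing a small discount $\kappa$, and invoking the comparison principle of Lemma~\ref{comparison} (which in turn rests on the convexity-type combination of \cite[Lemma~3.2]{BKLT15}). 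Some argument of this kind is needed; the pointwise evaluation is a heuristic, not a proof.

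For $\tilde\lambda\geq\lambda^*$ there are two problems. First, your $g_n$ need not satisfy $g_n\leq f$ on all of $\rd$: on the central cell it does, but the periodic copies of the region where $g_n=f$ land far away, where the values $f(x-\tau)$ (of order $\max_{B_{nL_0/4}}f$) can exceed $f(x)$, since coercivity gives no rate and $f$ need not be radially monotone. The paper's choice $F_k=\min\{f,k\}$ on a cube $Q_k$ chosen so that $f>k$ on $Q_k^c$ is designed precisely so that the periodic extension is both continuous across cell boundaries and globally $\leq f$. Second, and more seriously, the convergence $\mu_n\to\lambda^*$ is asserted but not proved. Theorem~\ref{Thm-Lip} gives a Lipschitz constant depending on $\mathrm{osc}_{B_{R+2}}\phi_n$ and $\mathscr{H}(\phi_n,B_{R+2})$, so you cannot extract a locally uniformly convergent subsequence of $\phi_n-\phi_n(0)$ without first proving a uniform-in-$n$ local oscillation bound (the analogue of \eqref{ET4.2B} and \eqref{EL3.4B}); moreover, to apply \eqref{lam-crit} the limit must be bounded below so that a shift makes it nonnegative, and this hinges on localizing $\Argmin\phi_n$ in a fixed compact set. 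The paper's Claim~2 achieves this because at a minimum point $z$ one gets $F_k(z)\leq\varrho_k\leq\lambda^*$, forcing $z\in\{f\leq\lambda^*\}$ for $k>\lambda^*$. With your $g_n$ equal to the constant $\min_{\rd}f$ on an unboundedly growing region, this localization fails, so the limit could be unbounded below and the identification with $\lambda^*$ breaks down.
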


The following easy lemma provides one side of the estimate.

\begin{lem}\label{L4.3}
Suppose that for some $\lambda\in\R$ there exists a $w\in C(\rd)\cap L^1(\omega_s)$ satisfying
$$ 
\cL w  \leq f- \lambda \quad \text{in}\;\rd,
$$
and $\sup_{\rd}w<\infty$. Then we must have $\lambda\leq \lambda^*$.
\end{lem}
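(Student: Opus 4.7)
The plan is to argue by contradiction, assuming $\lambda>\lambda^*$, and combine the discounted approximation of the ergodic problem with the comparison principle of Lemma~\ref{comparison}. I would first recall from Proposition~\ref{T3.2} the discounted solutions $w_\alpha$ of $\cL w_\alpha+\alpha w_\alpha=f$ and set $\bar w_\alpha := w_\alpha - w_\alpha(0)$, so that $\cL\bar w_\alpha+\alpha\bar w_\alpha=f-\alpha w_\alpha(0)$; by Theorem~\ref{T3.5}, $\alpha w_\alpha(0)\to\lambda^*$ as $\alpha\to 0$, while Lemmas~\ref{L3.3} and~\ref{L3.4} yield that $\bar w_\alpha$ is coercive with a rate uniform in $\alpha\in(0,1)$. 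Since $\sup_\rd w<+\infty$, the difference $\bar w_\alpha-w$ is uniformly coercive, so $m_0^\alpha:=\inf_\rd(\bar w_\alpha-w)$ is attained at some $\bar x_\alpha$ which remains in a bounded subset of $\rd$ independent of $\alpha$, and $\{m_0^\alpha\}_\alpha$ stays bounded.

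Setting $\hat w_\alpha:=\bar w_\alpha-m_0^\alpha$, the invariance of $\cL$ under additive constants shows that $\hat w_\alpha$ is a viscosity solution of $\cL\hat w_\alpha+\alpha\hat w_\alpha=f-\alpha(w_\alpha(0)+m_0^\alpha)$, with $\hat w_\alpha\geq w$ on $\rd$ and equality at $\bar x_\alpha$. I would then fix $\eta>0$ and let $v_\alpha:=\hat w_\alpha-\eta$ and $g:=f-\lambda+\alpha\sup_\rd w$. The hypotheses on $w$ together with $w\leq \sup_\rd w$ give $\cL w+\alpha w\leq g$, so $w$ is a subsolution of $\cL v+\alpha v=g$. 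On the other hand, for $\alpha$ sufficiently small the strict gap $\lambda-\lambda^*>0$ combined with $\alpha w_\alpha(0)\to\lambda^*$ and $\alpha(m_0^\alpha+\eta+\sup_\rd w)\to 0$ yields $\alpha(w_\alpha(0)+m_0^\alpha+\eta+\sup_\rd w)\leq\lambda$, which translates into $\cL v_\alpha+\alpha v_\alpha=f-\alpha(w_\alpha(0)+m_0^\alpha+\eta)\geq g$; thus $v_\alpha$ is a supersolution of $\cL v+\alpha v=g$.

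To close the argument, by coercivity of $\hat w_\alpha-w$ I would pick $R>0$ large enough so that $\bar x_\alpha\in B_R$ and $v_\alpha\geq w+1$ on $B_R^c$. Lemma~\ref{comparison} (in which $\alpha>0$) then yields $v_\alpha\geq w$ on $\rd$; evaluating at $\bar x_\alpha$ gives
$$
w(\bar x_\alpha)-\eta=v_\alpha(\bar x_\alpha)\geq w(\bar x_\alpha),
$$
a contradiction with $\eta>0$. The delicate point is verifying the preservation of the supersolution inequality for $v_\alpha$ under the $\eta$-shift for $\alpha$ chosen small depending on $\eta$, where the strict gap $\lambda-\lambda^*>0$ is used in an essential way. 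This $\eta$-shift is the device that converts the touching $\hat w_\alpha(\bar x_\alpha)=w(\bar x_\alpha)$ into a strict violation of the comparison principle, and it neatly avoids using $\hat w_\alpha$ itself as a viscosity test function---an option that would be problematic given the limited regularity available when $f$ is merely continuous.
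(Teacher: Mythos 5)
Your proof is correct, but it follows a genuinely different route from the paper's. The paper argues by contradiction using the \emph{limit} eigenfunction $u$ from Theorem~\ref{T3.5}: after normalizing $w\leq 0$, it shows that $u-k\geq w$ for every $k>0$ (absurd, since $u$ is fixed), and to invoke Lemma~\ref{comparison} it manufactures a zeroth-order term by adding a small artificial discount $\kappa$ to both inequalities and truncating $\bar u_N=\min\{u-k,N\}$; the strict gap $\delta=\lambda-\lambda^*$ absorbs the resulting errors. You instead stay at the level of the \emph{discounted} approximations $w_\alpha$, where the zeroth-order term $\alpha$ is already present, normalize $\hat w_\alpha=\bar w_\alpha-m_0^\alpha$ so that it touches $w$ from above, and extract the contradiction at the touching point via the $\eta$-shift; the gap $\lambda-\lambda^*>0$ is spent on absorbing $\alpha w_\alpha(0)\to\lambda^*$ and the $O(\alpha)$ constants. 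Your scheme avoids the artificial discount and the truncation (and is in fact structurally closer to the paper's proof of Proposition~\ref{T4.1}, which also compares against $W_l-W_l(0)+M$), at the price of tracking the approximating family. One imprecision to note: Lemmas~\ref{L3.3} and~\ref{L3.4} do not literally give coercivity of $\bar w_\alpha$ with a rate uniform in $\alpha$ --- Lemma~\ref{L3.3} applied to $w_\alpha$ yields a rate offset by $w_\alpha(0)\sim\lambda^*/\alpha$, and Lemma~\ref{L3.4} only gives a uniform lower bound plus the upper barrier $V$. To get what you use, either rerun the barrier argument of Lemma~\ref{L3.3} on $\bar w_\alpha$ viewed as a supersolution with source $f-\alpha w_\alpha(0)$ (a uniformly bounded shift of $f$, by Proposition~\ref{T3.2}), or rearrange the quantifiers: fix $\eta$, then choose $\alpha=\alpha_k$ small using only $\alpha m_0^\alpha\to 0$ (which follows from $-C\leq\bar w_\alpha$ in~\eqref{EL3.4A}--\eqref{EL3.4B} and $\sup_{\rd}w<\infty$), and only then choose $R$ using the coercivity of $\bar w_\alpha-w$ for that single fixed $\alpha$. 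With either repair the argument is complete.
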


\begin{proof}
Since $w$ is bounded from above, we can add an appropriate scalar to $w$ so that $w\leq 0$ in $\rd$.
Now suppose, on the contrary, that $\lambda-\lambda^*>\delta>0$ for some $\delta>0$. Recall the solution $u$ from Theorem~\ref{T3.5} and 
Proposition~\ref{T4.1}.
Since $u$ is coercive we have  $\lim_{|x|\to\infty} u=\infty$. We claim that
for any $k>0$ we must have $u-k\geq w$ in $\rd$, which is not possible for $k$ large enough. Therefore, we must have $\lambda\leq \lambda^*$.

To establish that claim, fix $k>0$ and define $\bar{u}=u-k$. Note that
$$
\cL \bar{u}= f-\lambda^*\geq f-\lambda+\delta\quad \text{in}\; \rd.
$$
Since $w\leq 0$, we can find a bounded domain $D$ satisfying $\min_{D^c}(\bar{u}-w)>0$. Now we choose $\kappa>0$ small enough so that
\begin{align*}
\cL w - f+ \lambda +\kappa w &\leq 0 \quad \text{in}\; D,
\\
\cL \bar{u} - f + \lambda +\kappa \bar{u} &\geq \frac{\delta}{2} \quad \text{in}\; D.
\end{align*}
Since $\bar{u}\in L^1(\omega_s)$, for all $N$ large we have
$$\cL \bar{u}_N - f + \lambda +\kappa \bar{u}_N \geq \frac{\delta}{4} \quad \text{in}\; D,$$
where $\bar{u}_N=\min\{\bar{u}, N\}$. Applying the comparison
principle Lemma~\ref{comparison} we then have $\bar{u}_N\geq w$
in $\rd$. Letting
$N\to\infty$,
 we conclude that $\bar{u}\geq w$ in $\rd$. This proves the claim.
\end{proof}
%%%%%%%%%%%%%%%%%%%%%%%%%%%%%%%%%%%%%%%%%%%%%%%%%%%%%%%%%%%%%%%%%%%%%%%%%%%%%%
Now we can complete the
\begin{proof}[Proof of Theorem~\ref{T4.2}]
Let us denote the right hand side of~\eqref{ET1.2B} by $\Lambda$. From Lemma~\ref{L4.3} it is evident that $\Lambda\leq\lambda^*$.
So we have to show that $\Lambda \geq \lambda^*$. Without any loss of
generality, we may assume that $f\geq 0$ and $H$ is $1$-periodic in the $x$ variable, that is,
$$H(x+e_i, p)=H(x, p)\quad \text{for all}\; x, p\in \rd,$$
and for all canonical unit vectors $e_i$ in $\rd$.

Let $g:\mathbb{N}\to \mathbb{N}$ be a function satisfying the following
\begin{itemize}
\item $g(k)$ is strictly increasing to infinity.
\item  For the cube $Q_k$, centered at $0$ and side length $2g(k)$
we have $f>k$ in $Q^c_k$.
\end{itemize}
Define $F_k(x) = \min \{ f(x), k \}$ on $Q_k$. By construction, we have 
$F_k=k$ on $\partial Q_k$. We extend $F_k$ to all of $\rd$ in a periodic fashion. At this point, we note that 
both $F_k$ and $H$ are periodic in the $x$ variable with period $2g(k)$.
Applying \cite[Proposition~4.2]{BKLT15} we can find a unique periodic function $w_k$ satisfying
\begin{equation}\label{ET4.2AA}
\cL w_k - F_k + \varrho_k=0\quad \text{in}\; \rd
\end{equation}
for some $\varrho_k\in\R$.

\medskip

\noindent
{\bf \textit{Claim 1.}} $0\leq \varrho_k\leq \lambda^*$.

Since, by definition, $f\geq F_k$ for all $k$, it follows from 
\eqref{ET4.2AA} that
$$
\cL w_k - f + \varrho_k\leq 0 \quad \text{in}\; \rd.
$$

Applying Lemma~\ref{L4.3} we then have $\varrho_k\leq\lambda^*$. This gives one side of
the estimate.
Now suppose, on the contrary, that $\varrho_k<0$. Let 
$z\in \Argmin_{Q_k} w_k=\Argmin_{\rd} w_k$. 
Applying the definition of viscosity solution
to \eqref{ET4.2AA} and using the test function
$\varphi\equiv w_k(z)$ we get that 
$$0\leq \cL\varphi(z) - F_k(z)+\varrho_k=-F_k(z)+\varrho_k<0,$$
which is not possible. Hence $\varrho_k\geq 0$. This concludes the Claim 1.

\medskip

\noindent
{\bf \textsl{Claim 2.}} \textsl{There exists $R > 0$, independent of $k$,
such that 
$$
\min_{\bar B_R} w_k = \min_{\rd} w_k\quad \text{for all $k$ large}.
$$
}

Since $w_k$ is a periodic and continuous function, it attains its minimum at some point $z \in \rd$. Using the test function
$\varphi\equiv w_k(z)$ we see that
$$
- F_k(z)+\varrho_k \geq \cL\varphi(z) - F_k(z)+\varrho_k\geq 0,
$$
which together with Claim 1 lead us to
$$
F_k(z)\leq \varrho_k\leq\lambda^*.
$$

Thus, for $k>\lambda^*$, we necessarily have $F_k(z)=f(z)$, from which
it is easily seen that $z$ belongs to the compact set
$\{x: f(x)\leq \lambda^*\}$. This proves Claim 2.

\medskip

Now recall the smooth positive function $V$ from \eqref{Supsol}
 satisfying $V(x)=|x|^\beta$ 
for $|x|\geq 1$, where $1<\beta <2s$ and $m(\beta-1)>\gamma$. 
By adding a suitable constant we can assume $\min_{\bar B_R} w_k=0$.
We claim that for some $R' > R$ we have 
\begin{equation}\label{ET4.2B}
\min_{\bar B_{R'}}w_k=0\leq w_k(x)\leq \max_{\bar B_{R'}} w_k + V(x) \quad \text{in}\; \rd.
\end{equation}

The left hand side inequality is obvious from the above construction. So
 we only prove the right hand side estimate. Since $V-w_k$ is coercive,
it attains its minimum at some point $z_k \in \rd$. Then, using $V$ as test function for $w_k$ at $z_k$ together with~\eqref{Supsol}, we arrive at
$$
\upkappa_1 |z_k|^{m(\beta - 1)} - \upkappa_0 \chi_{B_{R_0}}(z_k) + f(z_k) - F_k(z_k) + \rho_k \leq 0, 
$$
and since $F_k \leq f$, and $\rho_k \geq 0$, we conclude that for all $k \in \mathbb N$ we have
$$
|z_k| \leq (\upkappa_0 \upkappa_1^{-1})^{\frac{1}{m(\beta - 1)}}.
$$
Letting $R'$ the right-hand side of the above inequality we conclude~\eqref{ET4.2B}.
% Let $z_0$ be such that 
% $V-w_k\geq (V-w_k)(z_0)$ implying $w_k(x)\leq V(x) - (V-w_k)(z_0)$.
% Thus, from \eqref{ET4.2AA} and \eqref{Supsol}, we note that
% $$
% 0\geq \cL V(z_0) - F_k(z_0)+\varrho_k\geq 
% \cL V(z_0) - f(z_0)\leq 
% \upkappa_0 1_{\sK}(z_0)-\upkappa_1|z_0|^{m(\beta-1)},
% $$
% implying $|z_0|^{m(\beta-1)}\leq \frac{\upkappa_0}{\upkappa_1}$. 
% Thus, setting $B_1=B_r(0)$ with 
% $r=\left(\frac{\upkappa_0}{\upkappa_1}\right)^{\frac{1}{m(\beta-1)}}$,
% we have $z_0\in B_1$ for all $k$. Hence
% $$w_k(x)\leq V(x) + w_k(z_0)\leq \max_{B_1} w_k + V(x).$$

Now, through minor modifications of the arguments  of Lemma~\ref{L3.4} it can be shown that 
$$
\sup_{k\in\mathbb{N}} \max_{B_{R'}} w_k < +\infty.
$$
Therefore, by Theorem~\ref{Thm-Lip}, we have the family 
$\{w_k\}_k$ is locally equibounded and equi-Lipschitz continuous. Then, there exists a pair $(v, \rho) \in C(\rd) \cap L^1(\omega_s) \times \R$ such that (up to subsequences that we do not relabel) $w_{k} \to v$ locally uniformly in $\rd$, and $\varrho_k \to \varrho$ as $k \to \infty$, and
$$
\cL v - f + \varrho=0\quad \text{in}\; \rd.
$$
Also, notice that $0 \leq v$ in $\rd$. Then, by \eqref{lam-crit} 
we have $\varrho\geq \lambda^*$, and using Claim 1, we get that $\varrho= \lambda^*$. Again, by the
definition of $F_k$ and \eqref{ET4.2AA} we see that $w_k$ is a viscosity solution to
$$
\cL w_k - f+\varrho_k\leq 0\quad \text{in}\; \rd,
$$
and since $w_k$ is bounded above we conclude that $\varrho_k\leq \Lambda$ for all $k$.
This of course, gives $\lambda^*\leq\Lambda$, completing the proof.
\end{proof}

%%%%%%%%%%%%%%%%

\section{Uniqueness result and monotonicity of the critical eigenvalue}
\label{secunique}

In this section we prove uniqueness of $u$, obtained in Theorem~\ref{T3.5}, up to an addition by a scalar. As a consequence, we get a monotonicity property for the additive eigenvalue with respect to the source term $f$, see Theorem~\ref{T1.3} below.

Toward these goals we need the following key lemma which shows that the solution $u$ found in Theorem~\ref{T3.5} may be regarded as the \textit{minimal} solution.
\begin{lem}\label{L5.1}
Assume hypotheses of Theorem~\ref{T3.5} are in force, and let $u$ be the solution found there. There exists $R > 0$ such that, for any function $v \in C^1(\rd) \cap L^1(\omega_s)$ 
bounded from below, solving the inequality
$$
\cL v - f +\lambda^*\geq 0\quad \text{in}\; \rd
$$
in the viscosity sense, and satisfying $u\leq v$ in $B_R$, we have
$$
u \leq v \quad \mbox{in} \; \rd.
$$
\end{lem}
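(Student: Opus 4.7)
I adapt the Dirichlet--comparison strategy from the proof of Proposition~\ref{T4.1}, replacing the $\lambda > \lambda^*$ strictness used there by strictness produced via assumption \hyperlink{H3}{(H3)} combined with the hypothesis $u \leq v$ in $B_R$. First I normalize so that $\inf u = 0$, which is attained on a compact set by coercivity. I choose $R$ large enough that $\Argmin u \Subset B_R$ and, invoking Lemma~\ref{L3.4}, $\Argmin w_\alpha \subset B_R$ uniformly in $\alpha \in (0,1)$. For $\mu \in (1/2, 1)$, set $u_\mu := \mu u$. Using \hyperlink{H3}{(H3)} on the equation for $u$ yields
\[
\cL u_\mu - f + \lambda^* \leq -(1-\mu)\bigl[(f - \lambda^*) + b_m \mu^m |\grad u|^m - C\bigr],
\]
and the coercivity of $f$ makes the bracket bounded below by a positive constant on $B_R^c$, uniformly in $\mu \in [1/2, 1)$. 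Hence $u_\mu$ is a strict viscosity subsolution of $\cL \phi = f - \lambda^*$ on $B_R^c$, while on $B_R$ the hypothesis together with $u \geq 0$ give $u_\mu \leq u \leq v$.

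I then aim to prove $u_\mu \leq v$ on all of $\rd$; sending $\mu \nearrow 1$ then concludes. For this, I approximate $u$ by $\bar w_{\alpha_k} \to u$ locally uniformly (Theorem~\ref{T3.5}) and $w_{\alpha_k}$ by the Dirichlet solutions $W_n^{(k)}$ (Proposition~\ref{T3.2}), and apply the comparison principle Lemma~\ref{comparison} on $B_n \setminus B_R$ between $\zeta_n^{(k)} := W_n^{(k)} - W_n^{(k)}(0)$ and a shift $v + c_{n,k}$ of $v$. The constant $c_{n,k}$ is chosen to ensure simultaneously: (i) $\zeta_n^{(k)} \leq v + c_{n,k}$ on $B_R$, via the uniform convergence $\bar w_{\alpha_k}\to u$ and the hypothesis; (ii) $\zeta_n^{(k)} \leq v + c_{n,k}$ on $B_n^c$, using that $v$ is bounded below and coercive by Lemma~\ref{L3.3}; (iii) $v + c_{n,k}$ is a supersolution of the $\alpha_k$-discounted equation on the annulus $B_n \setminus B_R$. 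Sending $n \to \infty$, then $k \to \infty$, and invoking the strict-subsolution gain from the first paragraph to absorb the residual error yields $u_\mu \leq v$ on $\rd$.

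The main obstacle is reconciling the two competing scales for $c_{n,k}$: condition (iii) forces $c_{n,k} \gtrsim \lambda^*/\alpha_k$, while (i) forces $c_{n,k} \sim w_{\alpha_k}(0)$. Since $\alpha_k w_{\alpha_k}(0) \to \lambda^*$ these scales match to leading order, but the sub-leading remainder $(\lambda^* - \alpha_k w_{\alpha_k}(0))/\alpha_k$ has no a priori sign and must be absorbed. The absorption comes from the $(1 - \mu)$-strict-subsolution gain for $u_\mu$ on $B_R^c$ quantified in the first display above: it provides the slack needed for the discounted comparison to close once $\mu$ is sufficiently close to $1$ and $k$ is sufficiently large. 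This interplay between the discounted approximation and the scaling strictness from \hyperlink{H3}{(H3)} is the technical heart of the argument.
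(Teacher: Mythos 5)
Your overall strategy (scale by $\mu$, exploit \hyperlink{H3}{(H3)}, pass through the discounted Dirichlet approximations) is in the right spirit, and you have correctly isolated the central obstruction. But the step you call the ``technical heart'' is asserted rather than proved, and as you have set it up it does not close. You place the $(1-\mu)$ strictness on $u_\mu=\mu u$ in an isolated first paragraph, and then run the comparison between the \emph{unscaled} $\zeta_n^{(k)}=W_n^{(k)}-W_n^{(k)}(0)$ and $v+c_{n,k}$. In that comparison the $(1-\mu)$ gain never appears: condition (iii) genuinely forces $\alpha_k(v+c_{n,k})\geq \lambda^*$ on the annulus, hence $c_{n,k}\geq \lambda^*/\alpha_k-O(1)$, while (i) caps $c_{n,k}$ at $w_{\alpha_k}(0)+o(1)$; the discrepancy $(\lambda^*-\alpha_k w_{\alpha_k}(0))/\alpha_k$ is a spatial constant with no a priori bound or sign, and no fixed strict-subsolution gain in the PDE on the annulus can absorb an additive constant of unbounded size in the ordering of the comparison functions. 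To make your mechanism work you must scale the approximants themselves: compare $\mu W_n^{(k)}$ with $v+c$. Then \hyperlink{H3}{(H3)} gives $\cL(\mu W_n^{(k)})-f+\alpha_k\mu W_n^{(k)}\leq -(1-\mu)(f-C)$ in $B_n$, so the supersolution requirement on $v+c$ relaxes to $\alpha_k(v+c)\geq \lambda^*-(1-\mu)(f-C)$, which on $B_R^c$ (with $R$ chosen so that $f-\lambda^*-C>1$ there) is implied by $\alpha_k(v+c)\geq \mu\lambda^*-(1-\mu)$; since $\mu\alpha_k w_{\alpha_k}(0)\to\mu\lambda^*$, the choice $c=\mu w_{\alpha_k}(0)+o(1)$ satisfies (i), (ii) (by coercivity of $v$ for large $n$) and (iii) simultaneously once $k$ is large \emph{depending on} $\mu$. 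That computation is precisely what is missing, and without it the proof is incomplete.

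For comparison, the paper avoids the annulus comparison principle and the scale-matching issue altogether. It argues by contradiction: if $u>v$ somewhere in $B_R^c$, it considers the global maximum point $x_{n_k}\in B_{n_k}\setminus B_R$ of $\mu W_{n_k}-(v+\mu W_{n_k}(0))$, writes a single pointwise viscosity inequality there (gradients of the two functions match up to the factor $\mu$, and the nonlocal term has a sign at a global extremum), and applies \hyperlink{H3}{(H3)} to obtain $(\mu-1)f(x_{n_k})+\lambda^*-\mu\alpha_{n_k}W_{n_k}(x_{n_k})\geq -(1-\mu)C$. The discount enters only as $\alpha_{n_k}W_{n_k}(x_{n_k})\to\lambda^*$ --- there is never a division by $\alpha$ --- and the limit yields $f(\tilde x)\leq \lambda^*+C$ at a point $|\tilde x|\geq R$, contradicting the choice of $R$. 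You should either carry out the absorption computation sketched above in full, or adopt this pointwise maximum-principle route.
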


\begin{proof}
Without loss of generality, we can assume $f \geq 0$ and $\lambda^* \geq 0$ (see \eqref{E3.3}).
Let $R > 0$ be such that $\min_{x \in B_R^c} \{ f(x) - \lambda^*-C \} > 1$
where $C$ is given by \eqref{EH2}. So $R$ depends only on the data. 
%Adding a constant if it necessary, we may think that $u,v \geq 0$ (namely, we replace $u$ by $u - \inf u$ and $v$ by $v - \inf v$), and that $u \leq v$ in $B_R$.

Let $v$ be such that $u\leq v$ in $B_R$.
Suppose, on the contrary,  that $u > v$ at some point in $\bar{B}_R^c$, say $x_0\in B^c_R$. Then, there exists $\mu \in (1/2, 1)$ sufficiently close to $1$, so that
\begin{equation}\label{EL5.6A}
\max_{\bar{B}_R}(\mu u - v)< \mu u(x_0)-v(x_0) \quad \text{and}\quad \mu u(x_0)-v(x_0)>0.
\end{equation}

Now, by a standard diagonalization method and the proof of Theorem~\ref{T3.5}, we can find a sequence 
of $(W_{n_k}, \alpha_{n_k})$ such that 
$$
\left \{ \begin{array}{rll} \cL W_{n_k} - f + \alpha_{n_k} W_{n_k} &=0\quad & \text{in}\; B_{n_k}, \\
W_{n_k} &=0\; & \text{in}\; B^c_{n_k}, \end{array} \right .
$$
with
\begin{align}
\lim_{n_k\to \infty} \alpha_{n_k} W_{n_k}(0)&=\lambda^*,\label{EL5.6AA2}
\\
\lim_{n_k\to\infty} \bar{W}_{n_k} &= u\quad \text{uniformly over compacts},\label{EL5.6AA3}
\end{align}
where $\bar{W}_{n_k}(x):=W_{n_k}(x)-W_{n_k}(0)$. We choose $n_k$ large enough so that $x_0\in B_{n_k}$. Using \eqref{EL5.6A}
and the above limit, we further choose $n_k$ so that
\begin{equation}\label{EL5.6B}
\max_{\bar{B}_R}(\mu \bar{W}_{n_k} - v)< \mu \bar{W}_{n_k}(x_0)-v(x_0),
\quad \mu \bar{W}_{n_k}(x_0)-v(x_0)>0,\quad v>0 \; \text{in}\; B^c_{n_k},
\end{equation}
where the last inequality is possible due to Lemma~\ref{L3.3}.
Define $\widehat{v}_{n_k}(x)=v+\mu W_{n_k}(0)$. It is easily seen that
\begin{equation*}%\label{EL5.6C}
\cL \widehat{v}_{n_k} - f + \lambda^*\geq 0\quad \text{in}\; \rd,
\end{equation*}
and, since $W_{n_k}=0<v$ in $B^c_{n_k}$, it follows from \eqref{EL5.6B} that 
$$\kappa_{n_k}:=\sup_{\rd}(\mu W_{n_k}-\widehat{v}_{n_k})=\sup_{B_{n_k}}(\mu W_{n_k}-\widehat{v}_{n_k})>0.$$
Furthermore, the above supremum is attained at some point $x_{n_k}\in B_{n_k}\setminus B_R$. Thus,
$$\widehat{v}_{n_k}\geq \mu W_{n_k} -\kappa_{n_k}\quad \text{in}\; \rd,$$
and equality holds at the point $x_{n_k}$. By Theorem~\ref{T2.2} and
our hypothesis we have $u, v\in C^1(\rd)$ and therefore,
$x\mapsto H(x, \grad u(x)), H(x, \grad \widehat{v}(x))$ are continuous. Hence
we have 
$$-I (\widehat{v}-\mu W_{n_k}) + H(x, \grad \widehat{v})-\mu H(x, \mu^{-1}\grad W_{n_k})
- (1-\mu)f(x)+\lambda^* - \mu\alpha_{n_k} W_{n_k}\geq 0\quad \text{in}\; B_{n_k},$$
in the viscosity sense. This can obtained by mimicking the arguments
of \cite[Lemma~3.2]{BKLT15}. Evaluating at the point $x_{n_k}$,
where $\grad \widehat{v}(x_{n_k})=\mu\grad W_{n_k}(x_{n_k})$, we obtain
\begin{align*}
0&\leq  -\mu H(x_{n_k}, \grad W_{n_k}(x_{n_k})) + H(x_{n_k}, \mu\grad W_{n_k}(x_{n_k})) + (\mu-1) f(x_{n_k}) +  \lambda^* 
-\mu \alpha_{n_k} W_{n_k}(x_{n_k})
\end{align*}
which, in turn, gives from \eqref{EH2} 
\begin{align}\label{EL5.6D}
(\mu-1) f(x_{n_k}) +  \lambda^*-\mu \alpha_{n_k} W_{n_k}(x_{n_k})
&\geq \mu H(x_{n_k}, \grad W_{n_k}(x_{n_k}))- H(x_{n_k}, \mu\grad W_{n_k}(x_{n_k}))\nonumber
\\
&\geq - (1-\mu) C.
\end{align}
Now we have two possibilities. Suppose $|x_{n_k}|\to \infty$, along some subsequence, as $n_k\to\infty$. Since $W_{n_k}\geq 0$
and $f$ is coercive, we get a contradiction from \eqref{EL5.6D}. So the other possibility is $\{x_{n_k}\}$ is a bounded sequence.
Without loss of generality, we may assume that $x_{n_k}\to \tilde{x}$. It follows from the property of $\{x_{n_k}\}$ that
$|\tilde{x}|\geq R$. Using \eqref{EL5.6AA2} and \eqref{EL5.6AA3} we see that
$$
\alpha_{n_k} W_{n_k}(x_{n_k})=\alpha_{n_k} \bar{W}_{n_k}(x_{n_k}) + \alpha_{n_k} W_{n_k}(0)\to \lambda^*
$$
as $n_k\to\infty$. Thus, from \eqref{EL5.6D} we obtain
$$(1-\mu) (\lambda^*+ C-f(\tilde{x}) )\geq 0
\quad \text{and}\quad |\tilde{x}|\geq R.$$
This is also contradictory to the choice of $R$. Hence we must have $u\leq v$ in $\rd$.
\end{proof}

Now we are ready to establish the uniqueness result.

\begin{thm}\label{T5.2}
Suppose that \hyperlink{H1'}{(H1')}, \hyperlink{H2'}{(H2')}, \hyperlink{F1}{(F1)}-\hyperlink{F2}{(F2)} hold.
Let $v\in C^{1}(\rd)\cap L^1(\omega_s)$ be a non-negative supersolution satisfying
$$\cL v - f +\lambda^*\geq 0\quad \text{in}\; \rd,$$
and $u$ be the solution obtained in Theorem~\ref{T3.5}. Then we have $v=u+c$ for some scalar $c$.
\end{thm}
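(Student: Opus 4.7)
My plan is to combine the minimality of $u$ established in Lemma~\ref{L5.1} with a strong minimum principle at a contact point. Set $c^* := \inf_{\rd}(v-u)$; I will show that $c^*$ is attained and that the non-negative function $\psi := v - u - c^*$ vanishes identically.

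The first task is to locate the contact. Let $R$ be the radius from Lemma~\ref{L5.1} and $c_1 := \min_{\bar B_R}(v-u)$, which exists by continuity. Then $u \leq v - c_1$ on $B_R$ and $v - c_1$ is a $C^1(\rd)\cap L^1(\omega_s)$ supersolution to $\mathcal{L}(v-c_1) \geq f - \lambda^*$ (since $\mathcal{L}$ is invariant under additive constants) that is bounded from below. Hence Lemma~\ref{L5.1} applies and gives $u \leq v - c_1$ throughout $\rd$, so $c^* \geq c_1$. The reverse inequality $c^* \leq c_1$ is immediate from the definitions, so $c^* = c_1$ and the infimum is attained at some $x_0 \in \bar B_R$.

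The second task is a strong minimum principle for $\psi$ at $x_0$. By Theorem~\ref{T2.2}, $u \in C^{1,\bar\eta}_{\mathrm{loc}}(\rd)$, so $\psi \in C^1$ and $\nabla \psi(x_0) = 0$ because $x_0$ is an interior minimum. I would use $\varphi := u + c^*$ as a test function from below for the viscosity supersolution inequality for $v$ at $x_0$, splitting the nonlocal term via $v(x_0+y)-v(x_0) = [u(x_0+y)-u(x_0)] + \psi(x_0+y)$. The contributions coming from $u$ combine into $\mathcal{L}u(x_0) = f(x_0) - \lambda^*$, cancelling against the right-hand side, and sending the inner radius $\delta$ to zero leaves
\begin{equation*}
-\int_{\rd} \psi(x_0+y)\,K(y)\,dy \geq 0.
\end{equation*}
Since $\psi \geq 0$ and $K > 0$ a.e., this forces $\psi(x_0+y) = 0$ for a.e.\ $y$, and continuity of $\psi$ then yields $\psi \equiv 0$, i.e.\ $v = u + c^*$.

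The main obstacle is the regularity needed to make the test-function argument rigorous and to ensure the above integral converges at the singularity $y=0$. Because $\nabla\psi(x_0)=0$, the integrand is $O(|y|^{1+\eta}|y|^{-d-2s})$ near the origin, which is integrable precisely when $\eta > 2s-1$; this is not guaranteed by the baseline $C^{1,\bar\eta}_{\mathrm{loc}}$ regularity of Theorem~\ref{T2.2} alone, in which $\bar\eta<2s-1$. Hence, to turn $\varphi = u + c^*$ into an admissible test function, one either imposes the additional local H\"older hypothesis on $f$ (together with~\eqref{KLip} on $K$), which upgrades $u$ to $C^{2s+}(\rd)$ via the second part of Theorem~\ref{T2.2}, or approximates $u+c^*$ from below by smooth functions and passes to the limit in the nonlocal evaluation.
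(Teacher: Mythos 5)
Your overall strategy is the same as the paper's: use Lemma~\ref{L5.1} with $c=\min_{\bar B_R}(v-u)$ to force $v-c\geq u$ globally with a touching point in $\bar B_R$, observe that the gradients of $u$ and $v$ agree there so the Hamiltonian contributions cancel, and let the nonlocal evaluation at the contact point force the difference to vanish identically. The first half of your argument is exactly the paper's. Where you diverge is in the implementation of the contact-point step, and there you have correctly diagnosed the obstruction: plugging $\varphi=u+c^*$ directly as a test function for $v$ requires controlling $\int_{B_\delta}\psi(x_0+y)K(y)\,dy$ as $\delta\to0$, and the $C^{1,\bar\eta}_{loc}$ regularity with $\bar\eta<2s-1$ from Theorem~\ref{T2.2} is not enough. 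Your proposed fixes either add a hypothesis not present in the statement ($f$ locally H\"older, to reach $C^{2s+}$) or are left as a sketch (approximation from below). The paper avoids the issue with a different device: since $u,v\in C^1$, the functions $x\mapsto H(x,\grad u(x))$ and $x\mapsto H(x,\grad v_1(x))$ are continuous, and one can linearize the difference (mimicking \cite[Lemma~3.2]{BKLT15}) to conclude that $\varphi=v_1-u$ is itself a viscosity supersolution of $-I\varphi\geq H(x,\grad v_1)-H(x,\grad u)$. Testing $\varphi$ at its global minimum $\bar x$ with the \emph{constant} function, the inner contribution $I[B_\delta]$ of the test function is zero and only the outer term $-\int_{B_\delta^c}\varphi(\bar x+y)K(y)\,dy\geq 0$ survives, which involves no singularity and needs only $\varphi\in L^1(\omega_s)$; since $\varphi\geq 0$, $K>0$ and $\delta$ is arbitrary, $\varphi\equiv 0$. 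This is the cleaner route: it proves the theorem under the stated hypotheses, without upgrading $u$ beyond $C^1$. If you want to keep your formulation, you should replace your test-function step by this linearization-plus-constant-test argument (or fully carry out the approximation you sketch); as written, that step does not go through.
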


\begin{proof}
Let  $B_R$ be the ball given by Lemma~\ref{L5.1}. Define 
$$\min_{\bar{B}_R}(v-u)=c.$$
Letting $v_1=v-c$, we note that 
$$\cL v_1 - f +\lambda^*\geq 0\quad \text{in}\; \rd,$$
and $v_1$ touches $u$ in $\bar{B}_R$ from above.
Using Lemma~\ref{L5.1} it follows that $v_1\geq u$ in $\rd$. Setting 
$\varphi=v_1-u$ we see that
$$-I \varphi - H(x, \grad v_1)+ H(x, \grad u)\geq 0\quad \text{in}\; \rd.$$
Since $\inf_{\rd}\varphi=\min_{\bar{B}_R}\varphi=0$, there exists 
$\bar{x}\in \bar{B}_R$ such that $\varphi(\bar{x})=0=\grad\varphi(\bar{x})$.
This of course, implies $\grad u(\bar{x})=\grad v_1(\bar{x})$. So applying
$\psi\equiv \varphi(\bar{x})$ as the test function at the point $\bar{x}$
gives
$$-I[B_\delta^c](\varphi, 0, \bar x)\geq 0.$$
Since $\delta$ is arbitrary, we 
must have $\varphi\equiv 0$, implying $v=u+c$. Hence the proof.
\end{proof}

Now we can complete the
\begin{proof}[Proof of Theorem~\ref{T1.2}]
The existence of solution follows from Theorem~\ref{T3.5} and~\ref{T4.1} whereas \eqref{ET1.2B} follows from Theorem~\ref{T4.2}. 
Uniqueness is given by Theorem~\ref{T5.2} in combination with Theorem~\ref{T2.2}.
\end{proof}

Applying Lemma~\ref{L5.1} we have the following
\begin{thm}\label{T1.3}
Let $f_1, f_2$ be two continuous function
satisfying \hyperlink{F1}{(F1)}--\hyperlink{F2}{(F2)}. Let $\lambda^*(f_i)$
denote the critical value in \eqref{lam-crit} corresponding to $f_i$, $i=1,2$.
Then for $f_1\lneq f_2$ we have $\lambda^*(f_1)<\lambda^*(f_2)$.
\end{thm}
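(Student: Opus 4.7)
The plan is to establish $\lambda^*(f_1) \leq \lambda^*(f_2)$ trivially from the characterization~\eqref{lam-crit}, and then obtain strictness by contradiction via Lemma~\ref{L5.1} combined with a strong-minimum-principle argument at a touching point. For the easy direction, any $u \in C_+(\rd) \cap L^1(\omega_s)$ with $\cL u \geq f_2 - \lambda$ also satisfies $\cL u \geq f_1 - \lambda$ since $f_1 \leq f_2$; taking the infimum yields $\lambda^*(f_1) \leq \lambda^*(f_2)$.

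For the strict inequality I argue by contradiction. Assume $\lambda^*(f_1) = \lambda^*(f_2) =: \lambda^*$ and let $u_1, u_2$ be the coercive solutions furnished by Theorem~\ref{T3.5}, which lie in $C^{1,\bar\eta}_{loc}(\rd)$ by Theorem~\ref{T2.2}. Since $\cL u_2 = f_2 - \lambda^* \geq f_1 - \lambda^*$, the function $u_2$ is admissible as $v$ in Lemma~\ref{L5.1} applied with $f = f_1$. Setting $c := \max_{\bar B_R}(u_1 - u_2)$ for the radius $R$ from that lemma, I obtain $u_2 + c \geq u_1$ on $\bar B_R$ with equality at some $\bar x \in \bar B_R$, and Lemma~\ref{L5.1} extends this inequality to all of $\rd$. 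Thus $w := u_2 + c - u_1 \geq 0$ in $\rd$, attains its global minimum $0$ at $\bar x$, and, being $C^1$, satisfies $\grad u_1(\bar x) = \grad u_2(\bar x)$.

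The decisive step is to show $w \equiv 0$: if so, $u_1 \equiv u_2 + c$ gives $f_1 - \lambda^* = \cL u_1 = \cL u_2 = f_2 - \lambda^*$, contradicting $f_1 \lneq f_2$. Following the derivation used in the proof of Theorem~\ref{T5.2} and in \cite[Lemma~3.2]{BKLT15}, but adapted to distinct source terms, the $C^1$ regularity of $u_1, u_2$ yields the viscosity inequality
$$
-I w + H(x, \grad u_2) - H(x, \grad u_1) \geq f_2 - f_1 \quad \text{in}\; \rd.
$$
Testing at $\bar x$ with the constant $\psi \equiv 0$, the Hamiltonian difference vanishes by $\grad u_1(\bar x) = \grad u_2(\bar x)$, leaving, for every $\delta \in (0,1)$,
$$
-\int_{B_\delta^c} w(\bar x + y)\, K(y)\, dy \geq f_2(\bar x) - f_1(\bar x) \geq 0.
$$
Since $w \geq 0$ and $K > 0$ almost everywhere by~\eqref{ellipticity}, the left-hand side is nonpositive, so equality holds throughout: $w$ vanishes almost everywhere on $B_\delta^c(\bar x)$ for every $\delta > 0$, and continuity of $w$ gives $w \equiv 0$ on $\rd$. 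The principal technical obstacle I expect is the displayed viscosity inequality for $w$: pasting the sub- and supersolution inequalities for $u_1$ and $u_2 + c$ when their right-hand sides differ requires the machinery of~\cite[Lemma~3.2]{BKLT15}, which exploits $C^1$ regularity to handle the nonlinear Hamiltonian contribution; once this inequality is in hand, the minimum-principle argument above is routine.
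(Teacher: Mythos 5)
Your proposal is correct and follows essentially the same route as the paper: the easy inequality from \eqref{lam-crit}, then a contradiction argument noting that $u_2$ is a supersolution of the $f_1$-problem at level $\lambda^*$, so that Lemma~\ref{L5.1} and the touching-point/strong-minimum argument of Theorem~\ref{T5.2} force $u_1=u_2+c$ and hence $f_1=f_2$. The only cosmetic difference is that you carry the nonnegative term $f_2-f_1$ explicitly through the viscosity inequality, whereas the paper simply drops it and invokes the proof of Theorem~\ref{T5.2} directly.
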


\begin{proof}
Let $\lambda_i^*=\lambda^*(f_i)$ for $i=1,2$. From the definition \eqref{lam-crit} it is evident that $\lambda^*_1\leq \lambda^*_2$. Suppose, on the contrary, that $\lambda^*_1= \lambda^*_2$.
Let $u_i$ be the non-negative solution corresponding to $\lambda^*_i$, that is, 
\begin{equation}\label{ET1.3A}
\cL u_i - f_i +\lambda^*_i=0\quad \text{in}\; \rd,
\end{equation}
for $i=1,2$. Since $f_1\leq f_2$, it then follows that
$$
\cL u_2- f_1 + \lambda^*_1\geq \cL u_2- f_2 + \lambda^*_2=0\quad \text{in}\; \rd.
$$

By Theorem~\ref{T2.2} we also have $u_1, u_2\in C^1(\rd)$.
From the proof of Theorem~\ref{T5.2}, we then have $u_1=u_2 +c$ for some constant $c$. Plugging this information in \eqref{ET1.3A} we get $f_1=f_2$ in $\rd$ which contradicts the hypothesis
$f_1\lneq f_2$. Thus we must have $\lambda^*_1<\lambda^*_2$. Hence the proof.
\end{proof}

\section{Stochastic characterization of $\lambda^*$}
\label{secstoch}

The main goal of this section is to prove Theorem~\ref{T1.4} which is
a generalized version of Theorem~\ref{TeoIntro2} mentioned in the introduction. In this section, we fix $-I=(-\Delta)^s$, the fractional Laplacian.
More precisely,
consider the problem
\begin{equation}\label{E6}
(-\Delta)^s u + H(x, \grad u) = f - \lambda \quad \mbox{in}\; \rd,
\end{equation}
where $H$ satisfies \hyperlink{H1'}{(H1')}, \hyperlink{H2'}{(H2')} and \hyperlink{H3}{(H3)}.
We also assume $f$ to be locally H\"{o}lder continuous. By
Theorem~\ref{T1.2} we see that $u\in C^{2s+}(\rd)$.
We next characterize $\lambda^*$ using the underlying 
stochastic control problem. To do so, we need the following additional assumption

\medskip

\begin{itemize}
\item[\hypertarget{H4}{(H4)}] For each $x \in \rd$, $p\mapsto H(x,p)$ is strictly convex and 
continuously differentiable. Also, the exists $\ell\in C(\rd\times\rd)$,
the Lagrangian,  which is
bounded from below, strictly convex in the second variable and satisfies
$$H(x, p)=\sup_{\xi \in \rd}\{p\cdot\xi -\ell(x, \xi)\}, \quad x, p \in\rd.$$
\end{itemize}

Notice that $H(x,p) = \frac{1}{m}|p|^m$ satisfies the above assumption with $\ell(x,\xi) = \frac{1}{m'}|\xi|^{m'}$, and $m'$ the H\"older conjugate of $m$.

Now, consider any non negative, classical solution $u$ to
\begin{equation}\label{E1.7}
\cL u - f + \lambda^*= (-\Delta)^s u +H(x, \grad u) - f+ \lambda^*=0\quad \text{in}\; \rd.
\end{equation} 

Since $H$ is the Fenchel--Legendre transformation of $\ell$, it is well known that
$$H(x, p)=\xi\cdot p -\ell(x, \xi)\quad \text{where}\; \xi =\grad_p H(x, p).
$$

If $u$ is a classical  solution to \eqref{E1.7}, 
$\grad u$ is continuous. Letting $b_u(x)=\grad_p H(x, \grad u(x))$, we observe that
$$
H(x, \grad u)= b_u(x)\cdot \grad u(x)-\ell(x, b_u(x))
=\sup_{\xi\in\rd} \{\grad u\cdot \xi - \ell(x, \xi)\}.$$

Let us define
\begin{equation}\label{G2}
\cG(x, \xi)=f(x)+\ell(x, \xi),
\end{equation}
and the operator
$\sA_u$ as follows
$$\sA_u \varphi(x)=-(-\Delta)^s \varphi(x) - b_u(x)\cdot \grad \varphi(x),\quad \varphi\in C^{2s+}(\rd)\cap L^1(\omega_s).$$

It is then easily seen from \eqref{E1.7} that
\begin{equation}\label{E1.8}
\sA_u u(x) + \cG(x, b_u(x))=-(-\Delta)^s u + \inf_{\xi\in\rd}\{-\xi\cdot \grad u + \cG(x, \xi)\}=\lambda^*.
\end{equation}

% Note that the above equation corresponds to a stochastic 
% ergodic control problem with the running cost function
% given by $\cG$ and the controlled dynamics is given by
% $$X_t = -\int_0^t \zeta_s ds + L_t,\quad t\geq 0,$$
% where $\zeta$ denotes the control process and $L$ is a $2s$-stable process in $\rd$. For our next result, we are going to exploit 
% this underlying control problem. We restrict ourselves to 
% stationary Markov controls. More precisely, by a stationary Markov 
% control, we mean a locally H\"{o}lder continuous function $\zeta:\rd\to \rd$. 
% The associated operator $\sA_\zeta$ is defined as
% $$
% \sA_\zeta \varphi(x)=-(-\Delta)^s \varphi(x) - \zeta(x)\cdot \grad \varphi(x),\quad \varphi\in C^{2s+}(\rd)\cap L^1(\omega_s).
% $$
% The solution $X$, corresponding to $\zeta$, would be understood
% in the sense of martingale problem which we introduce below. Readers may consult Ethier and Kurtz \cite{EK86} for more details on this topic.

Thus, we can regard the problem in the context of stochastic ergodic control problem as it is explained in the introduction, this time with running cost given by $\mathcal G$ given in~\eqref{G2}. 

To make this section self-contained, we repeat some of the definitions provided in the introduction.
Let $\Omega:=\mathbb{D}([0, \infty):\rd)$ be the space of all right
continuous $\rd$ valued functions on $[0, \infty)$ with finite left limit, and
let $X:\Omega\to \rd $ denote the canonical coordinate process, that is,
$$X_t(\omega)=\omega(t)\quad \text{for all}\; \omega\in \Omega.$$
By $\{\mathfrak{F}_t\}$ we denote the filtration of $\sigma$-algebras generated by $X$.
Given a set $D$, by $\breve\uptau_D$ we denote the first return time to 
$D$, that is,
$$\breve\uptau_D=\inf\{t>0\; :\; X_t\notin D^c\}.$$

\begin{defi}\label{D1.2}
Let $\Usm$ be the set of all stationary Markov controls $\zeta$ satisfying the following:
\begin{itemize}
\item[(i)] The martingale problem for $(\sA_\zeta, C^\infty_c(\rd))$ with initial point $x\in\rd$ is well-posed for all $x\in\rd$ (see Definition~\ref{D1.1}).
\item[(ii)] For any non-empty compact set $\mathscr{B}$,
containing $\{x\in \rd\; :\; \cG(x, \xi)-\lambda^*\leq 1\; \text{for some}\; \xi\}$, we have $\Prob^\zeta_x(\breve\uptau_{\mathscr{B}}<\infty)=1$ for all $x\in {\mathscr{B}}^c$, 
where $\Prob^{\zeta}_x$ denotes the unique probability measure solving the martingale problem for $(\sA_\zeta, C^\infty_c(\rd))$ with initial point $x$.
\end{itemize}
\end{defi}

 By Lemmas~\ref{L6.3} and ~\ref{L6.4} it follows that $\Usm$ is non-empty.
Now we can state our main result of this section.
\begin{thm}\label{T1.4}
Let \hyperlink{H1'}{(H1')}, \hyperlink{H2'}{(H2')}, \hyperlink{H3}{(H3)}, \hyperlink{H4}{(H4)}; \hyperlink{F1}{(F1)}-\hyperlink{F2}{(F2)} hold and $f$ is locally H\"{o}lder continuous. Then for any $\zeta\in \Usm$ and compact set
$\mathscr{B}\supset \{x\in \rd\; :\; \cG(x, \xi)-\lambda^*\leq 1\; \text{for some}\; \xi\}$, we have
\begin{equation}\label{ET1.4A}
u(x)\leq \Exp^\zeta_x\left[\int_0^{\breve\uptau_{\mathscr{B}}}(\cG(X_s, \zeta(X_s))-\lambda^*)\ds\right] 
+ \Exp_x^\zeta[u(X_{\breve\uptau_{\mathscr{B}}})]\quad \text{for}\; x\in \mathscr{B}^c,
\end{equation}
where $u$ is the solution to~\eqref{E6} with $\lambda = \lambda^*$ given by Theorem~\ref{T1.2}, and 
$\Exp^\zeta_x[\cdot]$ denotes the expectation operator with respect to the probability measure $\Prob^{\zeta}_x$.
On the other hand, if there exists a nonnegative, classical
solution $v$ to~\eqref{E6} for some $\lambda\in\R$, and $(v, \lambda)$ satisfies \eqref{ET1.4A}, then we have $\lambda=\lambda^*$ and $v=u +c$ for some constant $c$.
\end{thm}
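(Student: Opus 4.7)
The plan splits into the stochastic upper bound~\eqref{ET1.4A} and the subsequent rigidity statement. For the upper bound, I start from the pointwise estimate
\[
\sA_\zeta u(x) + \cG(x, \zeta(x)) \geq \lambda^* \qquad \text{for every measurable } \zeta,
\]
with equality when $\zeta = b_u := \grad_p H(\cdot, \grad u)$. This is immediate from the Fenchel duality in~\hyperlink{H4}{(H4)}: since $-\zeta \cdot \grad u + \ell(x,\zeta) \geq -H(x, \grad u)$ for every $\zeta$, combining with the HJ equation~\eqref{E1.7} yields the claim. Given $\zeta \in \Usm$ and $x \in \mathscr{B}^c$, I then apply Dynkin's formula to $u$ at the stopping time $\breve\uptau_\mathscr{B}\wedge n$ and rearrange to obtain the truncated form of~\eqref{ET1.4A}. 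Since $u \in C^{2s+}(\rd) \cap L^1(\omega_s)$ is not compactly supported, the $C^\infty_c$-martingale identity of Definition~\ref{D1.1} has to be extended to $u$ via a cutoff $u\chi_R$ plus an approximation argument relying on the $C^{2s+}$ regularity from Theorem~\ref{T2.2} and on the tail control $u \in L^1(\omega_s)$. The limit $n\to\infty$ is handled using $\Prob^\zeta_x(\breve\uptau_\mathscr{B}<\infty)=1$, monotone convergence on the integrand $\cG(X_s,\zeta(X_s)) - \lambda^* > 1$ on $\{X_s\in\mathscr{B}^c\}$ (guaranteed by the defining property of $\mathscr{B}$), and Fatou's lemma on $u \geq 0$.

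For Part~2, the bound $\lambda \geq \lambda^*$ is immediate from the characterization~\eqref{lam-crit}, since $v\geq 0$ is a classical (hence super-) solution with ergodic constant $\lambda$. For the reverse inequality I take $\zeta = b_u$ and first verify that $b_u \in \Usm$: its local H\"{o}lder regularity follows from $u \in C^{2s+}$; the martingale problem for $(\sA_{b_u}, C^\infty_c(\rd))$ is well-posed by standard theory for locally H\"{o}lder drifts perturbing the fractional Laplacian; and the Foster--Lyapunov identity $\sA_{b_u}u = \lambda^* - \cG(\cdot,b_u) \leq -1$ on $\mathscr{B}^c$, paired with $u\geq 0$, yields $\Exp^{b_u}_x[\breve\uptau_\mathscr{B}] \leq u(x) < \infty$. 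This also gives positive recurrence of $X^{b_u}$ and a unique invariant probability measure $\mu_{b_u}$, whose tails are controlled via $u$ (through the coercivity estimates of Section~\ref{secgeneral}), so $u,v \in L^1(d\mu_{b_u})$. Using the invariance identity $\int \sA_{b_u} g \,d\mu_{b_u}=0$ applied to $g=u$ with the equation $\sA_{b_u}u + \cG(\cdot,b_u)=\lambda^*$ gives $\int \cG(\cdot,b_u)\,d\mu_{b_u}=\lambda^*$, while the same duality argument as in Part~1 applied to $v$ gives $\sA_{b_u}v + \cG(\cdot,b_u)\geq \lambda$, whence $\int \cG(\cdot,b_u)\,d\mu_{b_u}\geq \lambda$. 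Combining these two, $\lambda \leq \lambda^*$, so $\lambda=\lambda^*$, and Theorem~\ref{T5.2} delivers $v=u+c$.

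The main technical hurdle is twofold. First, the rigorous application of Dynkin's formula to $u$ (and, implicitly through the hypothesis, to $v$) despite their non-compact support requires a careful cutoff procedure with tail estimates in $L^1(\omega_s)$. Second, extending the invariance identity $\int \sA_{b_u} g \,d\mu_{b_u}=0$ beyond $g\in C^\infty_c$ to $g\in\{u,v\}$ demands integrability of both $g$ and $\sA_{b_u} g$ against $\mu_{b_u}$, which stems from the coercivity estimates of Section~\ref{secgeneral} together with the Foster--Lyapunov bound. An alternative route that avoids the invariant-measure identity is to subtract~\eqref{ET1.4A} applied to $v$ with $\zeta=b_u$ from Dynkin's equality for $u$ to obtain, for $x\in\mathscr{B}^c$,
\[
(v-u)(x) \leq -(\lambda-\lambda^*)\Exp^{b_u}_x[\breve\uptau_\mathscr{B}] + \Exp^{b_u}_x[(v-u)(X_{\breve\uptau_\mathscr{B}})],
\]
and then to iterate along the successive excursions of $X^{b_u}$ via the strong Markov property: boundedness of $v-u$ on the compact $\mathscr{B}$ is incompatible with the linearly accumulating penalty $(\lambda-\lambda^*)\sum_k \Exp[\breve\uptau_\mathscr{B}^{(k)}]$ whenever $\lambda>\lambda^*$.
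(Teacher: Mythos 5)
Your skeleton (Fenchel duality giving $\sA_\zeta u + \cG(\cdot,\zeta)\geq \lambda^*$, a Dynkin/verification argument, membership $b_u\in\Usm$, and comparison of the two representations) matches the paper's, but two steps conceal genuine gaps.

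First, in Part 1 you apply Dynkin's formula directly to $u$ under $\Prob^\zeta_x$ for an \emph{arbitrary} $\zeta\in\Usm$ and pass to the limit in the truncation. After localizing (in space or time) you obtain
$u(x)\leq \Exp^\zeta_x\big[\int_0^{\upsigma_n}(\cG-\lambda^*)\ds\big]+\Exp^\zeta_x\big[u(X_{\upsigma_n})\big]$ with $\upsigma_n=\breve\uptau_{\mathscr{B}}\wedge\uptau_{B_n}$, and the obstruction is the overshoot term $\Exp^\zeta_x\big[u(X_{\uptau_{B_n}})\,;\,\uptau_{B_n}<\breve\uptau_{\mathscr{B}}\big]$: this sits on the \emph{right} of the inequality, so Fatou with $u\geq 0$ goes the wrong way and you must show it tends to zero. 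Since $u$ is coercive ($\sim|x|^\beta$), the jumps are heavy-tailed, and the only information on $\zeta$ is local H\"older regularity plus $\Prob^\zeta_x(\breve\uptau_{\mathscr{B}}<\infty)=1$ (no rate, no Lyapunov function, no moment bounds on excursions), there is no available control on this term. The paper avoids the issue entirely: Lemma~\ref{L6.1} runs the Dynkin argument on the \emph{bounded} solution $\tilde W$ of a linear Dirichlet problem that vanishes outside $B_{n_k}$ (so the boundary term at $\uptau_{B_{n_k}}$ is exactly zero), compares $W_{n_k}\leq \tilde W$, and only then passes $n_k\to\infty$ and $\alpha\to 0$ (Lemma~\ref{L6.2}). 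Your route needs a substitute for this cancellation.

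Second, your main route for Part 2 (the invariant-measure identity) would, if correct, prove $\lambda\leq\lambda^*$ for \emph{every} nonnegative classical solution $v$, without ever invoking the hypothesis that $(v,\lambda)$ satisfies~\eqref{ET1.4A} — i.e.\ full uniqueness of the ergodic constant, which the paper explicitly states it cannot establish. The unjustified step is $\int \sA_{b_u}v\,d\mu_{b_u}=0$. For $g=u$ this is fine (Foster--Lyapunov: $\sA_{b_u}u\leq \kappa 1_K$), but for the alien solution $v$ one has only the lower bound $\sA_{b_u}v\geq \lambda-\cG(\cdot,b_u)$; the positive part of $\sA_{b_u}v$ involves $H(x,\grad v)-b_u\cdot\grad v$, which has no useful upper bound since $b_u$ optimizes the Legendre transform at $\grad u$, not $\grad v$. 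Even $v\in L^1(\mu_{b_u})$ is unclear: $\int\cG\,d\mu_{b_u}=\lambda^*$ only gives $f\in L^1(\mu_{b_u})$, and $f$ may grow as slowly as $\log|x|$ while $v\sim|x|^\beta$. Truncating with concave $\varphi_k$ yields an inequality in the unfavorable direction. Your ``alternative route'' is essentially the paper's actual argument, but the excursion iteration is unnecessary and not correctly set up (your representation inequality holds only for $x\in\mathscr{B}^c$, so chaining excursions requires handling starting points inside $\mathscr{B}$). The paper instead normalizes $v$ so that $\min_{\mathscr{B}}(u-v)=0$, subtracts the representation~\eqref{EL5.2B} for $(u,\lambda^*)$ from the hypothesis~\eqref{ET1.4A} for $(v,\lambda)$ with $\zeta=b_u$ to get $v\leq u$ on all of $\rd$ in one step (using $\lambda\geq\lambda^*$ and $X_{\breve\uptau_{\mathscr{B}}}\in\mathscr{B}$), and concludes $v=u+c$ and $\lambda=\lambda^*$ by the strong maximum principle argument of Theorem~\ref{T5.2}.
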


In the remaining part of the section we prove Theorem~\ref{T1.4}. For simplicity, we assume all the hypotheses of the theorem hold.

First, we prove \eqref{ET1.4A} in Lemmas~\ref{L6.1} and~\ref{L6.2} below. Recall the set of control $\Usm$ from
Definition~\ref{D1.2}.

\begin{lem}\label{L6.1}
Consider $\zeta\in \Usm$ and a compact set $\mathscr{B}$
satisfying the conditions of Definition~\ref{D1.2}(ii). 
Let $w_\alpha$ be the solution we obtain in Theorem~\ref{T3.2}. Then
we have
$$ w_\alpha(x)\leq \Exp^\zeta_x\left[\int_0^{\breve\uptau} 
e^{-\alpha s} \cG(X_s, \zeta(X_s)) \ds\right] + 
\Exp_x[e^{-\alpha \breve\uptau} w_\alpha(X_{\breve\uptau})]$$
for all $x\in \mathscr{B}^c$ satisfying $\Exp_x^\zeta[\breve\uptau]<\infty$, where
$$\breve\uptau=\inf\{t> 0\; :\; X_t\in \mathscr{B}\}.$$
\end{lem}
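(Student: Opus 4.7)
The plan is to turn the nonlinear HJB equation satisfied by $w_\alpha$ into a linear inequality for the operator $\sA_\zeta$ associated to the given Markov control, and then apply Dynkin's formula to get the stochastic representation. The Fenchel--Young inequality from \hyperlink{H4}{(H4)} gives, for any $\zeta \in \Usm$,
$$
H(x,\grad w_\alpha(x)) \geq \zeta(x)\cdot \grad w_\alpha(x) - \ell(x,\zeta(x)),
$$
so substituting into the equation $(-\Delta)^s w_\alpha + H(x,\grad w_\alpha) - f + \alpha w_\alpha = 0$ and rearranging yields
$$
\sA_\zeta w_\alpha(x) - \alpha w_\alpha(x) \geq -\cG(x,\zeta(x)) \quad \text{in}\; \rd,
$$
where we use that $\sA_\zeta w_\alpha = -(-\Delta)^s w_\alpha - \zeta\cdot \grad w_\alpha$ and the definition of $\cG$.

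Next, since $f$ is locally H\"older continuous (by the hypotheses of Theorem~\ref{T1.4}) Theorem~\ref{T2.2} ensures $w_\alpha \in C^{2s+}(\rd)\cap L^1(\omega_s)$, so $w_\alpha$ belongs to the natural extended domain of $\sA_\zeta$. For $x\in \mathscr{B}^c$, fix $T>0$ and set $\uptau_n = \breve\uptau \wedge T \wedge \inf\{t\geq 0 : X_t \notin B_n\}$. I would apply Dynkin's formula to the map $(t,x)\mapsto e^{-\alpha t}w_\alpha(x)$ under $\Prob_x^\zeta$ up to the stopping time $\uptau_n$, which for a $C^\infty_c$ test function is built into the martingale problem and extends to our $w_\alpha$ by a standard localisation argument (multiplying $w_\alpha$ by a smooth cutoff $\chi_n$ equal to $1$ on $B_n$, applying the martingale identity, and sending the cutoff away, using $w_\alpha \in L^1(\omega_s)$ to control the tails; see e.g.\ Ethier--Kurtz). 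The resulting identity combined with the inequality above gives
$$
w_\alpha(x) \leq \Exp_x^\zeta \left[\int_0^{\uptau_n} e^{-\alpha s}\cG(X_s, \zeta(X_s))\,ds\right] + \Exp_x^\zeta\left[e^{-\alpha \uptau_n} w_\alpha(X_{\uptau_n})\right].
$$

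Finally, I would pass to the limit $n \to \infty$ and then $T \to \infty$. Since $\zeta \in \Usm$ and $\Exp_x^\zeta[\breve\uptau] < \infty$ by hypothesis, $\uptau_n \nearrow \breve\uptau < \infty$ almost surely. We may assume after shifting $f$ by a constant (see \eqref{E3.3}) that $\cG \geq 0$, so monotone convergence handles the integral term. For the boundary term, split $\{\uptau_n = \breve\uptau\}$ from its complement: on the former, $X_{\uptau_n} = X_{\breve\uptau} \in \partial \mathscr{B}$ where $w_\alpha$ is bounded, and dominated convergence applies using the uniform bound $0\leq w_\alpha \leq \upkappa_0/\alpha + V$ from Proposition~\ref{T3.2}; on the latter event, whose probability tends to zero, one uses the polynomial growth of $V$ combined with finite $\Prob_x^\zeta$-moments of $X_t$ of sufficient order (guaranteed by the $2s$-stable nature of the noise and $\beta < 2s$) to show the contribution vanishes.

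The main obstacle I expect is the passage from the martingale problem on $C^\infty_c$ to the evaluation of Dynkin's formula on $w_\alpha \in C^{2s+}\cap L^1(\omega_s)$ together with the tail control in the limit $n\to\infty$: the nonlocal operator sees the unbounded function $w_\alpha$ everywhere, so the cutoff argument must carefully handle the error term $I(w_\alpha(1-\chi_n))(X_s)$ appearing when $w_\alpha\chi_n$ is used in place of $w_\alpha$, using the polynomial growth bound on $w_\alpha$ and the decay of the kernel $|y|^{-(d+2s)}$ to ensure this error tends to zero in $\Exp_x^\zeta$-expectation along the stopped trajectory.
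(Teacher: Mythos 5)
Your linearization via Fenchel--Young is the right first move (the paper uses the same inequality $H(x,p)\geq \zeta\cdot p-\ell(x,\zeta)$), but your route — applying Dynkin's formula directly to the unbounded function $w_\alpha$ and localizing — has a genuine gap at the final limit passage. On the event $\{\uptau_n<\breve\uptau\wedge T\}$ the process has left $B_n$, possibly by a large jump, and you must show $\Exp_x^\zeta\bigl[w_\alpha(X_{\uptau_n})\mathds{1}_{\{\uptau_n<\breve\uptau\wedge T\}}\bigr]\to 0$ with $w_\alpha$ growing like $|x|^\beta$. Your justification — ``finite $\Prob^\zeta_x$-moments of $X_t$ of sufficient order, guaranteed by the $2s$-stable nature of the noise and $\beta<2s$'' — is not available: membership in $\Usm$ imposes no growth condition whatsoever on the control $\zeta$ (only well-posedness of the martingale problem and recurrence to compact sets), so moments of the controlled process $X_t=-\int_0^t\zeta(X_r)dr+L_t$, and in particular of the overshoot $|X_{\uptau_{B_n}}|$, are not controlled by the stable noise alone. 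Even in the driftless case one needs a quantitative overshoot/exit-distribution estimate of the form $\Exp[|X_{\uptau_{B_n}}|^\beta\mathds{1}_{\{\uptau_{B_n}<T\}}]\to 0$, which you neither state nor prove; with an arbitrary admissible drift it is unclear how to obtain it. A similar (smaller) issue affects your cutoff step: the error $I(w_\alpha(1-\chi_n))$ must be controlled along the trajectory, which again leans on the same unproved tail bounds.

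The paper avoids this entirely, and it is worth seeing how. It works with the Dirichlet approximations $W_{n_k}$ (which vanish outside $B_{n_k}$) and introduces an auxiliary function $\tilde W$ solving the \emph{linear} exit-time problem $\sA_\zeta\tilde W+\cG-\alpha\tilde W=0$ in $B_{n_k}\setminus\mathscr{B}$ with data $\tilde W=W_{n_k}$ on $\mathscr{B}$ and $\tilde W=0$ outside $B_{n_k}$. The Fenchel--Young inequality shows $\tilde W$ is a supersolution of the nonlinear Dirichlet problem, so the comparison principle gives $W_{n_k}\leq\tilde W$; Dynkin's formula is then applied only to the \emph{bounded}, compactly supported (up to the data on $\mathscr{B}$) function $\tilde W$, for which the exit-position term is literally zero outside $B_{n_k}$ and dominated convergence is immediate. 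The conclusion for $w_\alpha$ follows by letting $n_k\to\infty$ using the uniform bound $0\leq W_{n_k}\leq \upkappa_0/\alpha+V$ only at the hitting position $X_{\breve\uptau}\in\mathscr{B}$, where it is harmless. To repair your argument you would either need to prove the missing overshoot estimates for every $\zeta\in\Usm$, or adopt a comparison-based detour of this kind.
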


\begin{proof}
Recall the function $W_{n_k}$ that locally converges to $w_\alpha$ and
\begin{equation*}
\begin{split}
\cL W_{n_k}-f+\alpha W_{n_k}&=0 \hspace{3mm} \text{in}\hspace{2mm} B_{n_k},
\\
W_{n_k}&=0 \hspace{3mm} \text{in}\hspace{2mm} B^c_{n_k}.
\end{split}
\end{equation*}
Since $f$ is locally H\"{o}lder continuous, we have 
$W_{n_k}\in C^{2s+}(B_{n_k})$ by Theorem~\ref{T2.2}.
Choose $n_k$ large enough so that 
$\mathscr{B}\Subset B_{n_k}$. Set $D_k= B_{n_k}\setminus \mathscr{B}$. Let $\tilde{W}$ be the unique solution to
$$\sA_\zeta \tilde{W} + \cG-\alpha \tilde{W}=0\quad \text{in}\; D_k,
\quad \text{and} \quad \tilde{W}=0\quad \text{in}\; B^c_{n_k},
\quad \tilde{W}=W_{n_k}\; \text{in}\; \mathscr{B}.$$
Existence of viscosity solution follows from \cite[Corollary~5.7]{MOU-2017}
whereas regularity and uniqueness are established in \cite[Theorem~2.3]{BK22}. It is also shown in \cite[Theorem~2.3]{BK22} that 
$\tilde{W}\in C^{2s+}(D_k)\cap C_b(\rd)$. Now, note that
$$\cL \tilde{W} - f + \alpha \tilde{W}
\geq -\sA_\zeta \tilde{W} - \cG+\alpha \tilde{W}=0.
$$
As both $\tilde{W}$ and $W_{n_k}$ are classical solutions in $D_k$, we can apply
the comparison principle we obtain that $W_{n_k}\leq \tilde{W}$ in $\rd$. { Indeed, since $H$ is continuously differential in the second variable,
we note that
$$ (-\Delta)^s (\tilde{W}-W_{n_k}) + h(x)\cdot (\grad \tilde{W}-\grad W_{n_k}) + \alpha (\tilde{W}-W_{n_k})\geq 0 \quad \text{in}\; D_k,
$$
for some continuous function $h$ in $D_k$, and $(\tilde{W}-W_{n_k})\geq 0$ in $D^c_k$. Thus we can apply comparison principle.}
Using \cite[Lemma~4.3.2]{EK86} and Lemma~\ref{L5.1}, we see that 
$$\tilde{M}^\psi_t:= e^{-\alpha t}\psi(X_t)-\psi(X_0)
-\int_0^t e^{-\alpha s}(\sA_v \psi(X_s)-\alpha\psi(X_s))ds,$$
is $\mathfrak{F}_t$-martingale, for $\psi\in C^\infty_c(\rd)$. 
Let $\uptau_n$ be the first exit time from the ball $B_n(0)$. Set $n$ large enough
so that $\mathscr{B}\Subset B_n(0)$. Define
$\upsigma_n=\Breve\uptau\wedge \uptau_n$. Then $\upsigma_n$ denotes the 
first exit time from the annulus $B_n(0)\cap\mathscr{B}^c$, that is,
$$
\upsigma_n=\inf\{t>0\; :\; X_t\notin B_n(0)\cap\mathscr{B}^c\}.
$$
By optional sampling theorem \cite[Theorem~2.2.13]{EK86} it then follows that
$$
M^\psi_{t\wedge \upsigma_n}=e^{-\alpha (t\wedge \upsigma_n)}\psi(X_{t\wedge \upsigma_n})-\psi(x)-
\int_0^{t\wedge\upsigma_n} e^{-\alpha s}(\sA_\zeta\psi(X_s)-\alpha\psi(X_s))\ds
$$
also forms a martingale under $\Prob^\zeta_x$ with respect to the stopped filtration $\mathfrak{F}_{t\wedge\upsigma_n}$. Hence
\begin{equation*}
\psi(x)=
\Exp^\zeta_x[e^{-\alpha (t\wedge \upsigma_n)}\psi(X_{t\wedge \upsigma_n})] + 
\Exp^\zeta\left[\int_0^{t\wedge\upsigma_n}e^{-\alpha s}(\sA_\zeta\psi(X_s)-\alpha\psi(X_s))\ds\right]
\end{equation*}
for all $\psi\in C^\infty_c(\rd)$ and $t> 0$. By a standard approximation argument, it is easily seen that
\begin{equation}\label{EL6.1A}
\psi(x)=
\Exp^\zeta_x[e^{-\alpha (t\wedge \upsigma_n)}\psi(X_{t\wedge \upsigma_n})] + 
\Exp^\zeta\left[\int_0^{t\wedge\upsigma_n}e^{-\alpha s}(\sA_\zeta\psi(X_s)-\alpha\psi(X_s))\ds\right]
\end{equation}
for all $\psi\in C_b(\rd)\cap C^{2s+}(\overline{B_n(0)\cap \mathscr{B}})$. Now, for $n<n_k$, if we take $\psi=\tilde{W}$ and
then let $t\to\infty$, using the dominated convergence theorem we obtain
\begin{equation*}
\tilde{W}(x)= 
\Exp^\zeta_x[e^{-\alpha  \upsigma_n}\tilde{W}(X_{\upsigma_n})] + 
\Exp^\zeta\left[\int_0^{\upsigma_n}e^{-\alpha s}\cG(X_s, \zeta(X_s))\ds\right].
\end{equation*}
Again by \cite[Lemma~3.7]{ABC16}, $\uptau_{B_n(0)}\to \uptau_{n_k}:=\uptau_{B_{n_k}(0)}$ as $n\to n_k$. Since $\tilde{W}=0$ in $B^c_{n_k}(0))$,
we obtain from above that 
\begin{align*}
\tilde{W}(x)&=\Exp^\zeta_x\left[\int_0^{\breve\uptau\wedge \uptau_{n_k}} 
e^{-\alpha s} \cG(X_s, \zeta(X_s)) \ds\right] + 
\Exp^\zeta_x[e^{-\alpha \breve\uptau} 
W_\alpha(X_{\breve\uptau})1_{\{\breve\uptau<\uptau_{n_k}\}}]
\\
&\leq 
\Exp^\zeta_x\left[\int_0^{\breve\uptau} 
e^{-\alpha s} \cG(X_s, \zeta(X_s)) \ds\right] + 
\Exp^\zeta_x[e^{-\alpha \breve\uptau} 
W_{n_k}(X_{\breve\uptau})1_{\{\breve\uptau<\uptau_{n_k}\}}]
\end{align*}
for $x\in D_k$. Since $\cG$ is bounded from below and $\Exp_x^\zeta[\breve\uptau]<\infty$,
We obtain the desired result by letting $n_k\to\infty$.
\end{proof}

Now to prove \eqref{ET1.4A}, we only need to consider $\zeta\in\Usm$ satisfying $\Exp^\zeta_x[\breve\uptau]<\infty$, as
$\cG-\lambda^*>1$ in $\mathscr{B}^c$.
Recall from Theorem~\ref{T3.5} that $\bar{w}_{\alpha_{n_k}}\to u$, as 
$\alpha_{n_k}\to 0$ where $\bar{w}_\alpha(x):=w_\alpha(x)-w_\alpha(0)$.
From Lemma~\ref{L6.1} we see that
\begin{align*}
\bar{w}_\alpha(x)& \leq \Exp^\zeta_x\left[\int_0^{\breve\uptau} 
e^{-\alpha s} (\cG(X_s, \zeta(X_s))-\lambda^*) \ds\right]
+ (\lambda^*-\alpha w_\alpha(0)) \alpha^{-1}\Exp_x[1-e^{-\alpha\breve\uptau}] 
\\
&\quad + \Exp^\zeta_x[e^{-\alpha\breve\uptau} \bar{w}_\alpha(X_{\breve\uptau})]
\\
& \leq \Exp^\zeta_x\left[\int_0^{\breve\uptau} 
(\cG(X_s, \zeta(X_s))-\lambda^*) \ds\right]
+ |\lambda^*-\alpha w_\alpha(0)|\Exp_x[\breve\uptau] 
 + \Exp^\zeta_x[e^{-\alpha\breve\uptau} \bar{w}_\alpha(X_{\breve\uptau})],
\end{align*}
using the inequality $1-e^{-x}\leq x$ for all $x\geq 0$. 
Letting $\alpha_{n_k}\to 0$ to obtain the following result.

\begin{lem}\label{L6.2}
Let $u$ be the solution obtained by Theorem~\ref{T3.5}. Then it holds that
\begin{equation*}
u(x)\leq \Exp^\zeta_x\left[\int_0^{\breve\uptau}(\cG(X_s, Dv(X_s))-\lambda^*)\ds\right] + \Exp^\zeta_x[u(X_{\breve\uptau})],
\end{equation*}
for all $x\in \mathscr{B}^c$ and $\zeta\in\Usm$.
\end{lem}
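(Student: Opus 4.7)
The plan is to take the limit $\alpha = \alpha_{n_k} \to 0$ along the subsequence from Theorem~\ref{T3.5} in the inequality for $\bar{w}_\alpha$ that is displayed immediately before the lemma. Before doing so, I would first dispense with the case $\Exp_x^\zeta[\breve\uptau]=+\infty$: since the set $\mathscr{B}$ contains $\{x : \cG(x,\xi)-\lambda^*\leq 1 \ \text{for some}\ \xi\}$, the integrand $\cG(X_s,\zeta(X_s))-\lambda^*$ is strictly greater than $1$ for every $s\in[0,\breve\uptau)$ (since $X_s\in \mathscr{B}^c$ on that interval), so the integral on the right dominates $\Exp_x^\zeta[\breve\uptau]=+\infty$ and the claimed inequality is automatic because $u$ is nonnegative. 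Hence we may assume $\Exp_x^\zeta[\breve\uptau]<\infty$ throughout.

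Next, recall the displayed bound
$$
\bar{w}_\alpha(x) \leq \Exp^\zeta_x\!\left[\int_0^{\breve\uptau} e^{-\alpha s}(\cG(X_s,\zeta(X_s))-\lambda^*)\ds\right] + |\lambda^* -\alpha w_\alpha(0)|\,\Exp_x^\zeta[\breve\uptau] + \Exp^\zeta_x\!\left[e^{-\alpha \breve\uptau}\bar{w}_\alpha(X_{\breve\uptau})\right].
$$
I would pass to the limit term by term along $\alpha=\alpha_{n_k}\to 0$. The left-hand side converges to $u(x)$ by Theorem~\ref{T3.5} (locally uniform convergence $\bar w_{\alpha_{n_k}}\to u$). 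The middle error term vanishes because $\alpha_{n_k} w_{\alpha_{n_k}}(0)\to \lambda^*$ (this is~\eqref{EL5.6AA2}) and $\Exp_x^\zeta[\breve\uptau]$ is finite by assumption.

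For the integral term, the integrand $e^{-\alpha s}(\cG(X_s,\zeta(X_s))-\lambda^*)$ is nonnegative on $[0,\breve\uptau)$ (again using the positivity $\cG-\lambda^*>1$ on $\mathscr{B}^c$) and monotonically increases to $\cG(X_s,\zeta(X_s))-\lambda^*$ as $\alpha\downarrow 0$, so Tonelli/monotone convergence gives convergence to $\Exp_x^\zeta\!\left[\int_0^{\breve\uptau}(\cG(X_s,\zeta(X_s))-\lambda^*)\ds\right]$. The boundary term is the most delicate point, because for a jump process $X_{\breve\uptau}$ need not lie on $\partial\mathscr{B}$; still, $X_{\breve\uptau}\in\mathscr{B}$ by definition, and $\mathscr{B}$ is compact, so the local uniform convergence $\bar w_{\alpha_{n_k}}\to u$ supplies uniform convergence on $\mathscr{B}$. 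Since $\sup_k\sup_{\mathscr{B}}|\bar w_{\alpha_{n_k}}|<\infty$ and $e^{-\alpha\breve\uptau}\leq 1$, the dominated convergence theorem applied to $e^{-\alpha_{n_k}\breve\uptau}\bar w_{\alpha_{n_k}}(X_{\breve\uptau})\to u(X_{\breve\uptau})$ on the full-measure event $\{\breve\uptau<\infty\}$ (guaranteed by $\zeta\in\Usm$) yields convergence to $\Exp_x^\zeta[u(X_{\breve\uptau})]$. Assembling the four limits in the master inequality produces exactly the stated bound, completing the proof.
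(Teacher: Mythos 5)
Your argument is correct and follows the paper's own route: the paper derives exactly the same master inequality from Lemma~\ref{L6.1} (bounding the discounted integral by the undiscounted one via $e^{-\alpha s}\le 1$ and $1-e^{-x}\le x$) and then simply "lets $\alpha_{n_k}\to 0$", which is precisely the term-by-term limit you carry out. Your treatment of the boundary term (using $X_{\breve\uptau}\in\mathscr{B}$ by right continuity and closedness of $\mathscr{B}$, uniform convergence of $\bar w_{\alpha_{n_k}}$ on the compact $\mathscr{B}$, and $\Prob^\zeta_x(\breve\uptau<\infty)=1$ from the definition of $\Usm$) supplies details the paper leaves implicit, and the reduction to $\Exp^\zeta_x[\breve\uptau]<\infty$ matches the paper's remark that $\cG-\lambda^*>1$ on $\mathscr{B}^c$.
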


Now we consider the second part of Theorem~\ref{T1.4}. Consider a pair $(v, \lambda)$ with $v\geq 0$, satisfying
$$\cL v -f +\lambda=0\quad \text{in}\; \rd.$$
From \eqref{lam-crit} it follows that $\lambda\geq \lambda^*$ and by Theorem~\ref{T2.2} we also have $v\in C^{2s+}(\rd)$.
Setting $b_v(x)=\grad_p H(x, \grad v(x))$ we observe from \eqref{E1.8} that
\begin{equation}\label{E6.2}
\sA_v v(x) + \cG(x, b_v(x))=\lambda\quad \text{in}\; \rd.
\end{equation}
In Lemma~\ref{L6.3} and ~\ref{L6.4} we show that $b_v\in\Usm$.
Let us first show that the martingale problem for $(\sA_v, C^\infty_c(\rd))$ with initial value $x$ is indeed well-posed.
To this end, we consider the perturbed operator
$$\sA_n \varphi(x)=-(-\Delta)^s \varphi(x) - 1_{B_n(0)}b_v(x)\cdot \grad \varphi(x) \quad \varphi\in C_c^\infty(\rd).$$
Note that $\sA_n=\sA_v$ on $C^\infty_c(B_n(0))$. Since 
$1_{B_n(0)}b_v$ is bounded, and therefore, lies in the Kato class
$\mathbb{K}_{d, 2s-1}$, by \cite[Theorem~1.2]{CW16} the martingale problem corresponding to
$(\sA_n, C^\infty_c(\rd))$ is well-posed (see also, \cite{CKS-12}).
Thus, there exists a unique probability measure $\Prob^n$ on 
$\Omega$ that corresponds to the martingale problem $(\sA_n, C^\infty_c(\rd))$ with initial point $x$.

\begin{lem}\label{L6.3}
For every $x\in\rd$, the martingale problem for $(\sA_v, C^\infty_c(\rd))$ with initial point $x\in\rd$ is well-posed.
\end{lem}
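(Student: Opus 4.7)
The plan is localization combined with a Lyapunov estimate using $v$ itself. Since $\sA_n \varphi = \sA_v \varphi$ for every $\varphi \in C^\infty_c(B_n(0))$, the unique solution $\Prob^n_x$ of the martingale problem for $(\sA_n, C^\infty_c(\rd))$ at $x$ also solves the stopped martingale problem for $(\sA_v, C^\infty_c(\rd))$ up to $\uptau_n := \inf\{t > 0 : X_t \notin B_n(0)\}$. Well-posedness for each $\sA_n$ then yields the consistency $\Prob^{n+1}_x|_{\mathfrak F_{\uptau_n}} = \Prob^n_x|_{\mathfrak F_{\uptau_n}}$, so a projective limit $\Prob_x$ on $\mathfrak F_\infty$ exists provided $\uptau_n \to \infty$ a.s.

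Non-explosion is the heart of the argument and is where $v$ enters as a Lyapunov function. Combining \eqref{E6.2} with the lower bound on $\ell$ in \hyperlink{H4}{(H4)} yields
\begin{equation*}
\sA_v v = \lambda - \cG(\cdot, b_v(\cdot)) \leq \lambda + C_0 - f \leq M,
\end{equation*}
uniformly on $\rd$, thanks to the coercivity of $f$. I would approximate $v$ by $\{v_k\} \subset C^\infty_c(\rd)$ with $v_k = v$ on $B_n(0)$, exploiting $v \in L^1(\omega_s)$ together with dominated convergence to pass to the limit in the nonlocal part of $\sA_v v_k$. Applying the resulting Dynkin-type identity (in the spirit of \cite[Lemma~4.3.2]{EK86}) under $\Prob^n_x$ then yields
\begin{equation*}
\Exp^n_x[v(X_{t \wedge \uptau_n})] \leq v(x) + M t.
\end{equation*}
Since $v \geq 0$ is coercive by Lemma~\ref{L3.3}, Chebyshev's inequality gives $\Prob^n_x(\uptau_n \leq t) \leq (v(x) + Mt)/\inf_{|y|\geq n} v(y) \to 0$ as $n \to \infty$ for every fixed $t > 0$. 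A stability argument, using $v \in L^1(\omega_s)$ and the identity $\sA_v \varphi = \sA_n \varphi$ on $\supp \varphi \subset B_n(0)$, then shows that $\Prob_x$ indeed solves the full martingale problem for $(\sA_v, C^\infty_c(\rd))$ at $x$.

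Uniqueness follows by essentially the same localization. If $\Prob$ is any solution of the martingale problem for $(\sA_v, C^\infty_c(\rd))$ at $x$, then for every $\varphi \in C^\infty_c(B_n(0))$ the optional-sampling identity at $t \wedge \uptau_n$ coincides with that for $\sA_n$, so by well-posedness of $\sA_n$ we must have $\Prob|_{\mathfrak F_{\uptau_n}} = \Prob^n_x|_{\mathfrak F_{\uptau_n}}$. Repeating the Lyapunov estimate above now under $\Prob$ forces $\uptau_n \to \infty$ $\Prob$-a.s., and hence $\Prob = \Prob_x$ on $\mathfrak F_\infty$. The main technical obstacle is the rigorous justification of Dynkin's formula for the unbounded function $v \in C^{2s+}(\rd) \cap L^1(\omega_s)$ rather than $C^\infty_c(\rd)$: a careful choice of cut-offs and uniform control of the nonlocal tail against the weight $\omega_s$ are needed to ensure $\sA_v v_k(X_s) \to \sA_v v(X_s)$ in a dominated sense on $[0, t \wedge \uptau_n]$, and this is the one place where the integrability hypothesis built into our solution class is essential.
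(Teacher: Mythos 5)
Your proposal is correct and follows essentially the same route as the paper: localize to the operators $\sA_n$ with truncated drift (well-posed by the Kato-class result), reduce full well-posedness to non-explosion via the localization theorem of Ethier--Kurtz (the paper simply cites \cite[Theorem~4.6.3]{EK86} rather than redoing the projective-limit and uniqueness steps by hand), and obtain non-explosion by using $v$ itself as a Lyapunov function through $\sA_v v=\lambda-\cG(\cdot,b_v)\leq M$, Dynkin's formula, and the coercivity of $v$ from Lemma~\ref{L3.3}. The one place where your argument needs care is the truncation of the unbounded $v$: for the one-sided bound to survive truncation you must take the cutoffs monotonically from below, e.g.\ $v_k=v\chi_k$ with $0\leq\chi_k\uparrow 1$, so that $0\leq v_k\leq v$ and $v_k=v$ near $x$ give $-(-\Delta)^s v_k(x)\leq -(-\Delta)^s v(x)$ and hence $\sA_v v_k\leq M$ on $B_n(0)$ uniformly in $k$ (a generic approximation with ``dominated convergence'' does not automatically preserve the Lyapunov inequality); also such $v_k$ are only $C^{2s+}_c$, not $C^\infty_c$, so you need the extended Dynkin identity for $C_b(\rd)\cap C^{2s+}$ functions, which the paper establishes by approximation. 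The paper sidesteps this by truncating in the range instead, composing with bounded concave $\varphi_k$ and using $\varphi_k(v(x+y))-\varphi_k(v(x))\leq\varphi_k'(v(x))(v(x+y)-v(x))$ to get $\sA_v\varphi_k(v)\leq\varphi_k'(v)\,\sA_v v\leq\kappa 1_K$ immediately; both devices rest on the same pointwise comparison principle for the nonlocal operator and yield the same estimate after Fatou.
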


\begin{proof}
By \cite[Theorem~4.6.1]{EK86} we know that the stopped martingale problem for $\sA_n$ in any open set $U\subset\rd$ is well-posed.
Thus, in view of \cite[Theorem~4.6.3]{EK86} and  it is enough
to show that
\begin{equation}\label{EL7.1A}
\lim_{n\to\infty}\Prob^n(\sigma_n\leq t)=0\quad \text{for all}\; t>0,
\end{equation}
where
$$\sigma_n=\inf\{t\; : X_t\notin B_n(0)\; \text{or}\; X_{t-}\notin B_n(0)\}.$$
For a bounded domain $D$,
by $\uptau_D$ we denote the first exit time from $D$, that is,
$$\uptau_D=\inf\{t>0 \; : X_t\notin D\}.$$
We note that $\{X_{t\wedge \uptau_D}\; :\; t\geq 0\}$ has the same law under $\Prob_n$ whenever $D\subset B_n(0)$. This follows
from the well-posedness of the martingale problem for $\sA_n$ \cite[Theorem~4.6.1]{EK86}.
 Let 
$D\subset B_n(0)$ and $\psi\in C_c^{\infty}(\rd)$. Then 
$$\psi(X_t)-\psi(x)-\int_0^t \sA_n\psi(X_s)\ds$$
forms a martingale under $\Prob^n$. By optional sampling theorem \cite[Theorem~2.2.13]{EK86} it then follows that
$$\psi(X_{t\wedge \uptau_D})-\psi(x)-\int_0^{t\wedge\uptau_D} \sA_n\psi(X_s)\ds$$
also forms a martingale under $\Prob^n$. Since $\sA_n=\sA_v$ in $D$, it follows that
$$\psi(X_{t\wedge \uptau_D})-\psi(x)-\int_0^{t\wedge\uptau_D} \sA_v\psi(X_s)\ds$$
forms a martingale under $\Prob^n$ with respect to the stopped filtration $\mathfrak{F}_{t\wedge\uptau_D}$. Hence
\begin{equation*}
\Exp^n[\psi(X_{t\wedge \uptau_D})]=
\psi(x) + \Exp^n\left[\int_0^{t\wedge\uptau_D}\sA_v\psi(X_s)\right]
\end{equation*}
for all $\psi\in C^\infty_c(\rd))$, where $\Exp^n[\cdot]$ denotes the expectation operator with respect to the 
probability measure $\Prob^n$. By a standard approximation argument, it is easily seen that
\begin{equation}\label{EL7.1B}
\Exp^n[\psi(X_{t\wedge \uptau_D})]=
\psi(x) + \Exp^n\left[\int_0^{t\wedge\uptau_D}\sA_v\psi(X_s)\right]
\end{equation}
for all $\psi\in C_b(\rd)\cap C^{2s+}(\bar{D})$. Recall from \eqref{E6.2} that
$$\sA_v v(y) =\lambda-\cG(y, \grad v(y)).$$
Since $\cG$ is coercive, we can find a compact set $K$ and a constant
$\kappa>0$ so that
$$\sA_v v(y)\leq \kappa 1_K(y)\quad \text{in}\; \rd.$$
Since $v$ is non-negative, it is also coercive by Lemma~\ref{L3.3}. Let 
$\varphi_k:[0, \infty)$ be a bounded, increasing $C^2$ concave function satisfying
$$\varphi_k(s)=s\quad \text{for}\; s\in [0, k],
\quad \varphi(s)=k+1\quad \text{for}\; s\in [k+1, \infty),$$
and $0\leq \varphi^\prime_k\leq 1$.
Then 
$$\sA_v\varphi_k(v)=-(-\Delta)^s \varphi_k(v) - b_v\cdot \grad\varphi_k(v)
\leq \varphi^\prime_k(v)(-(-\Delta)^s v - b_v\cdot Dv)
\leq \kappa 1_K(y).
$$
Choose $k$ large enough so that $\varphi_k(v(x))=v(x)$. Now applying 
\eqref{EL7.1B} we obtain
\begin{equation*}
\Exp^n[\varphi_k(v(X_{t\wedge\uptau_D}))]\leq v(x) + \kappa t,
\end{equation*}
for all $D\subset B_n(0)$. Letting $k\to\infty$ and applying Fatou's lemma we then have
\begin{equation}\label{EL7.1C}
\Exp^n[v(X_{t\wedge\uptau_D})]\leq v(x) + \kappa t,
\end{equation}
for all $t>0$. Since $X$ is right continuous with finite left limits, 
it follows that 
$$\{\sigma_n\leq t \}\subset \{\uptau_{B_{n-1}}\leq t\}.$$
Using the coercivity of $v$ and taking $D=B_{n-1}$ in \eqref{EL7.1C} we obtain
\begin{align*}
\Prob^n(\sigma_n\leq t)\leq \Prob^n(\uptau_{B_{n-1}}\leq t)
\leq [\inf_{B^c_{n-1}} v]^{-1} (\kappa t + v(x))
\to 0, \quad \text{as}\; n\to\infty.
\end{align*}
This gives us \eqref{EL7.1A}, completing the proof.
\end{proof}
Thus, by Lemma~\ref{L6.3},  there exists a unique probability measure $\Prob^v_x$ satisfying the martingale problem for $(\sA_v, C^\infty_c(\rd))$ with initial point $x$. We denote the 
corresponding expectation operator by $\Exp^v_x[\cdot]$.
Now we fix a  ball $\mathscr{B}_1$ so that $\cG(x, \xi)-\lambda>1$ for
$x\in\mathscr{B}^c_1$. Let $\breve\uptau_1$ be the first hitting time to
$\mathscr{B}_1$, that is,
$$\breve\uptau=\inf\{t> 0\; :\; X_t\in \mathscr{B}_1\}.$$
Below in Lemma~\ref{L6.4} we show that $b_v\in\Usm$.

\begin{lem}\label{L6.4}
Let $v\in C^{2s+}(\rd)\cap L^1(\omega_s)$ be a nonnegative solution to~\eqref{E6}.

Define $b_v(y)=\grad_p H(y, \grad v(y))$ and $\sA_v$ is defined as above. Then
we have $\Exp^v_x[\breve\uptau_1]<\infty$ for all 
$x\in \mathscr{B}_1^c$ and
\begin{equation}\label{EL5.2B}
v(x)\geq \Exp^v_x\left[\int_0^{\breve\uptau}(\cG(X_s, b_v(X_s))-\lambda^*)\ds\right] + \Exp^v_x[v(X_{\breve\uptau})],
\end{equation}
for all $x\in \mathscr{B}^c_1$. Furthermore, $b_v\in\Usm$.
\end{lem}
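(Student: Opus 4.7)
The plan is to apply a truncated Dynkin formula to $v$ under the probability measure $\Prob^v_x$ supplied by Lemma~\ref{L6.3}, coupled with the classical pointwise identity
\[
\sA_v v(x) = \lambda - \cG(x, b_v(x)) \quad \text{in}\; \rd,
\]
valid because $v \in C^{2s+}(\rd)$ is a classical solution of \eqref{E6}. The structural feature that drives the argument is $\cG(x, b_v(x)) - \lambda > 1$ on $\mathscr{B}_1^c$ (by the choice of $\mathscr{B}_1$ and $\lambda \geq \lambda^*$), which will force the exit time from $\mathscr{B}_1^c$ to have finite mean.

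The main step is to apply the martingale identity \eqref{EL7.1B} on bounded domains $D = B_n$ to the concave cutoff $\varphi_k(v) \in C_b(\rd) \cap C^{2s+}(\rd)$, where $\varphi_k$ is as in the proof of Lemma~\ref{L6.3}. Concavity of $\varphi_k$ combined with a symmetric Jensen inequality for the fractional Laplacian yields the pointwise bound $\sA_v(\varphi_k(v))(x) \leq \varphi_k'(v(x))\, \sA_v v(x)$, so that optional sampling at the stopping time $\uptau_{n,t} := t \wedge \uptau_{B_n} \wedge \breve\uptau_{\mathscr{B}_1}$ (dominated by $\uptau_{B_n}$) would produce
\[
\varphi_k(v(x)) \geq \Exp^v_x[\varphi_k(v(X_{\uptau_{n,t}}))] + \Exp^v_x\!\left[\int_0^{\uptau_{n,t}} \varphi_k'(v(X_s))\,(\cG(X_s, b_v(X_s)) - \lambda)\, ds\right].
\]

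On $\{s < \breve\uptau_{\mathscr{B}_1}\}$ the running-cost integrand is nonnegative and bounded below by $\varphi_k'(v(X_s))$. Dropping the nonnegative boundary term and letting $k \to \infty$ (so that $\varphi_k' \nearrow 1$ and $\varphi_k(v) \nearrow v$), then $n \to \infty$ (using non-explosion, which is implicit in Lemma~\ref{L6.3}, so that $\uptau_{B_n} \nearrow \infty$ $\Prob^v_x$-a.s.), and finally $t \to \infty$, monotone convergence will deliver $\Exp^v_x[\breve\uptau_{\mathscr{B}_1}] \leq v(x) < \infty$, hence $\breve\uptau_{\mathscr{B}_1}$ is $\Prob^v_x$-almost surely finite.

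With this integrability in hand, I would take $k, n, t \to \infty$ in the displayed inequality using monotone convergence for the nonnegative running cost and Fatou's lemma together with $v \geq 0$ for the boundary term, arriving at \eqref{EL5.2B}. Condition (i) of Definition~\ref{D1.2} for $b_v \in \Usm$ is then exactly Lemma~\ref{L6.3}, while condition (ii) follows from $\Prob^v_x(\breve\uptau_{\mathscr{B}_1} < \infty) = 1$ via the elementary monotonicity $\breve\uptau_{\mathscr{B}} \leq \breve\uptau_{\mathscr{B}_1}$ for every compact $\mathscr{B} \supseteq \mathscr{B}_1$ (any admissible $\mathscr{B}$ from Definition~\ref{D1.2} can be enlarged to contain $\mathscr{B}_1$ without loss). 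The principal obstacle I anticipate is controlling the simultaneous unboundedness of $v$ and of the running cost $\cG$ during the limit passage; this is precisely why the concave truncation $\varphi_k$ is used in tandem with the strict margin $\cG - \lambda > 1$, which keeps every limit step monotone and stable.
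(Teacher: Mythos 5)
Your main argument --- applying the Dynkin-type identity \eqref{EL7.1B}/\eqref{EL5.2C} to the concave truncations $\varphi_k(v)$, using concavity to get $\sA_v(\varphi_k(v))\leq \varphi_k'(v)\,\sA_v v=\varphi_k'(v)(\lambda-\cG(\cdot,b_v))$, and then passing to the limit in $k$, $n$, $t$ --- is essentially the paper's proof. Your reordering of the limits (extracting $\Exp^v_x[\breve\uptau_1]\leq v(x)$ directly from the lower bound $\cG-\lambda>1$ on $\mathscr{B}_1^c$ before establishing \eqref{EL5.2B}) is legitimate and arguably cleaner than the paper's route, which first deduces $\Prob^v_x(\breve\uptau_1<\infty)=1$ from the coercivity of $v$ and only afterwards reads off finiteness of the expectation. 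Up to the usual care with Fatou for the boundary term (noting that $X_{\uptau_{n,t}}=X_{\breve\uptau_1}$ exactly on the event $\{\breve\uptau_1<t\wedge\uptau_{B_n}\}$, which exhausts $\Omega$ up to a null set), this part is fine.

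The genuine gap is in the last step, the verification of condition (ii) of Definition~\ref{D1.2} for $b_v$. That condition quantifies over \emph{every} compact set $\mathscr{B}$ containing $\{x:\cG(x,\xi)-\lambda^*\leq 1\ \text{for some}\ \xi\}$, and such a $\mathscr{B}$ may be strictly \emph{smaller} than $\mathscr{B}_1$ (the smallest admissible choice is essentially the closure of that sublevel set, while $\mathscr{B}_1$ is a possibly much larger ball chosen relative to $\lambda\geq\lambda^*$). For $\mathscr{B}\subsetneq\mathscr{B}_1$ the monotonicity goes the wrong way: $\breve\uptau_{\mathscr{B}}\geq\breve\uptau_{\mathscr{B}_1}$, since it is harder to hit a smaller set, so $\Prob^v_x(\breve\uptau_{\mathscr{B}_1}<\infty)=1$ gives no control on $\breve\uptau_{\mathscr{B}}$. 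Your parenthetical ``any admissible $\mathscr{B}$ can be enlarged to contain $\mathscr{B}_1$ without loss'' is exactly backwards: enlarging $\mathscr{B}$ weakens the requirement rather than preserving it. The paper closes this by invoking positive recurrence of $X$ under $\Prob^v_x$ (citing \cite[Theorem~5.1]{ABC16}), which yields almost sure finiteness of the return time to \emph{any} compact set containing the sublevel set. You would need either that recurrence input, or to rerun your occupation-time argument with $\mathscr{B}$ in place of $\mathscr{B}_1$ (which works directly only when $\lambda=\lambda^*$, since otherwise $\cG-\lambda$ need not exceed $1$ on $\mathscr{B}^c$).
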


\begin{proof}
From Lemma~\ref{L6.3}  we already know that the martingale problem for $(\sA_v, C^\infty_c(\rd))$ is well-posed for any initial condition $x\in\rd$.
Let $\uptau_n$ be the first exit time from the ball $B_n(0)$. Define
$\upsigma_n=\Breve\uptau\wedge \tau_n$. Then $\upsigma_n$ denotes the 
first exit time from the annulus $B_n(0)\cap\mathscr{B}^c_1$, that is,
$$
\upsigma_n=\inf\{t>0\; :\; X_t\notin B_n(0)\cap\mathscr{B}^c_1\}.
$$
The arguments in Lemma~\ref{L6.1} then gives (see \eqref{EL6.1A})
\begin{equation}\label{EL5.2C}
\Exp_x[\psi(X_{t\wedge \upsigma_n})]=
\psi(x) + \Exp_x\left[\int_0^{t\wedge\upsigma_n}\sA_v\psi(X_s)\right]
\end{equation}
for all $\psi\in C_b(\rd)\cap C^{2s+}(\overline{B_n(0)\setminus\mathscr{B}})$. Consider the class of concave functions $\varphi_k$ from Lemma~\ref{L6.3} and notice that

$$\sA_v \varphi_k(v)\leq \varphi^\prime_k(v)(\lambda-\cG)\quad
\text{in}\; \rd,$$
for all $k$. Applying \eqref{EL5.2C} we thus obtain
\begin{align*}
v(x)=\varphi_k(v(x))\geq \Exp_x[\varphi_k(v(X_{t\wedge \upsigma_n}))]
+ \Exp_x\left[\int_0^{t\wedge\upsigma_k}\varphi^\prime_k(v)(\cG(X_s, b_v(X_s))-\lambda)\right].
\end{align*}
Let $k\to\infty$ and apply Fatou's lemma to get
\begin{align*}
v(x)\geq \Exp_x[v(X_{t\wedge \upsigma_n})]
+ \Exp_x\left[\int_0^{t\wedge\upsigma_n}(\cG(X_s, b_v(X_s))-\lambda)\right].
\end{align*}
Letting $t\to\infty$ and applying Fatou's lemma and monotone convergence theorem again we have
\begin{align*}
v(x)\geq \Exp_x[v(X_{\upsigma_n})]
+ \Exp_x\left[\int_0^{\upsigma_n}(\cG(X_s, b_v(X_s))-\lambda)\right]\quad x\in B_n(0)\setminus \mathscr{B}.
\end{align*}
Since $v$ is coercive, and $\upsigma_n=\Breve\uptau_1\wedge \uptau_n$,
letting $n\to\infty$ in the relation
$v(x)\geq \Exp_x[v(X_{\upsigma_n})]$ implies that 
$\Prob_x(\breve\uptau_1<\infty)=1$. Now we can let $n\to\infty$ and apply Fatou's lemma again to obtain
\begin{align*}
v(x)\geq \Exp^v_x[v(X_{\breve\uptau})]
+ \Exp^v_x\left[\int_0^{\breve\uptau}(\cG(X_s, b_v(X_s))-\lambda)\right].
\end{align*}
This gives \eqref{EL5.2B}. Since
$(\cG-\lambda)>1$ in $\mathscr{B}^c$, we also have 
$\Exp^v_x[\breve\uptau_1]<\infty$. Again, by \cite[Theorem~5.1]{ABC16}, $X$ is positive recurrent under $\Prob^v_x$, which in 
particular, implies that $\Prob_x^v(\breve\uptau_{\mathscr{B}}<\infty)=1$
for all $\mathscr{B}$ containing $\{x\in \rd\; :\; \cG(x, \xi)-\lambda^*\leq 1\; \text{for some}\; \xi\}$. Thus
$b_v\in\Usm$, completing the proof.
\end{proof}

Now we can complete the proof of Theorem~\ref{T1.4}.
\begin{proof}[Proof of Theorem~\ref{T1.4}]
\eqref{ET1.4A} follows from Lemma~\ref{L6.1} and ~\ref{L6.2}.
To prove the second part, consider 
a nonnegative, classical
solution $v$ to
$$\cL v - f + \lambda=0\quad \text{in}\; \rd,$$
so that $(v, \lambda)$ satisfies \eqref{ET1.4A}. In other words,
for any compact $\mathscr{B}$, as in Definition~\ref{D1.2}(ii), we have
\begin{equation*}%\label{ET1.4B}
v(x)\leq \Exp^\zeta_x\left[\int_0^{\breve\uptau_{\mathscr{B}}}(\cG(X_s, \zeta(X_s))-\lambda)\ds\right] 
+ \Exp_x^\zeta[v(X_{\breve\uptau_{\mathscr{B}}})]\quad \text{for}\; x\in \mathscr{B}^c,
\end{equation*}
for all $\zeta\in\Usm$. Since $\lambda\geq \lambda^*$ by \eqref{lam-crit}, it follows from above that
\begin{equation}\label{ET1.4B}
v(x)\leq \Exp^\zeta_x\left[\int_0^{\breve\uptau_{\mathscr{B}}}(\cG(X_s, \zeta(X_s))-\lambda^*)\ds\right] 
+ \Exp_x^\zeta[v(X_{\breve\uptau_{\mathscr{B}}})]\quad \text{for}\; x\in \mathscr{B}^c.
\end{equation}
Now, consider the solution $(u, \lambda^*)$ obtained by Theorem~\ref{T3.5}. Let $b_u(x)=\grad_p H(x, \grad u(x))$. From 
Lemma~\ref{L6.4} we know that $b_u\in\Usm$. Fix a compact
set $\mathscr{B}$ containing $\{x\in \rd\; :\; \cG(x, \xi)-\lambda^*\leq 1\; \text{for some}\; \xi\}$. Using 
\eqref{EL5.2B} for the solution pair $(u, \lambda^*)$,
and \eqref{ET1.4B} and choosing
$\zeta=b_u$, we then have
\begin{align*}
v(x)&\leq \Exp^u_x\left[\int_0^{\breve\uptau_{\mathscr{B}}}(\cG(X_s, b_u(X_s))-\lambda^*)\ds\right] 
+ \Exp_x^u[v(X_{\breve\uptau_{\mathscr{B}}})],
\\
u(x)&\geq \Exp^u_x\left[\int_0^{\breve\uptau_{\mathscr{B}}}(\cG(X_s, b_u(X_s))-\lambda^*)\ds\right] 
+ \Exp_x^u[u(X_{\breve\uptau_{\mathscr{B}}})]
\end{align*}
for $x\in \mathscr{B}^c$. Now translate $v$ by adding a constant so that $\min_{\mathscr{B}}(u-v)=0$. Then, from the above representation, we have $u\geq v$ in $\rd$. Repeating the arguments of Theorem~\ref{T5.2} we see that $\lambda=\lambda^*$
and $u=v$ in $\rd$. This completes the proof.
\end{proof}

\subsection*{Acknowledgement.}
This research of Anup Biswas was supported in part by a SwarnaJayanti fellowship SB/SJF/2020-21/03.
Erwin Topp was partially supported by Fondecyt Grant no. 1201897.

\subsection*{Data availability} There is no data in the present work.

\subsection*{Conflicts of interest statement}
On behalf of all authors, the corresponding author states that there is no conflict of
interest.

%%%%%%%%%%%%%%%%%%%%%%%%%%%%%%%%%%%%%%%%%%%%%%%%%%%%%%%%%%%%%%%%%%%%%%%%%%
\bibliographystyle{plain}
\bibliography{ref.bib}

\end{document}